\ifdefined\pdfoutput\pdfoutput=1\fi
\documentclass[12pt,reqno]{amsart}
\usepackage[a4paper, hmargin={2.7cm,2.7cm},vmargin={3.3cm,3.3cm}]{geometry}
\usepackage{booktabs,longtable,tabularx,array,microtype}
\usepackage[T1]{fontenc}
\usepackage{lmodern}
\usepackage{amsmath,amssymb,amsthm,mathtools}
\usepackage{microtype,enumitem,tikz}
\definecolor{figureBlue}{HTML}{286B91}
\definecolor{figureOchre}{HTML}{BA8436}
\definecolor{figureInk}{HTML}{687580}
\definecolor{figurePale}{HTML}{DCE6EC}
\usepackage[hidelinks]{hyperref}

\makeatletter
\usepackage{iftex}
\ifPDFTeX
  \usepackage[T1]{fontenc}
  \usepackage[utf8]{inputenc}

  \usepackage{amsmath,amssymb}
  \usepackage{graphicx}
  \usepackage{xcolor}
  \usepackage{pdfrender}
  \usepackage[scaled=0.783]{DejaVuSansMono}

  \newsavebox{\lean@symbolbox}
  \newcommand{\leanmath}[1]{%
    \ifmmode #1\else
      \sbox{\lean@symbolbox}{\scalebox{0.8}{%
        \textpdfrender{TextRenderingMode=FillStroke,LineWidth=0.1pt}{%
          $\m@th\textstyle #1$}}}%
      \makebox[\fontdimen2\font][c]{%
        \ifdim\wd\lean@symbolbox>\fontdimen2\font
          \resizebox{\fontdimen2\font}{\height}{\usebox{\lean@symbolbox}}%
        \else
          \usebox{\lean@symbolbox}%
        \fi}%
    \fi}
  \newcommand{\leanoperator}[1]{%
    \leanmath{{\color[RGB]{24,45,75}\boldsymbol{#1}}}}

  \DeclareUnicodeCharacter{2115}{\leanmath{\mathbb{N}}}
  \DeclareUnicodeCharacter{2102}{\leanmath{\mathbb{C}}}
  \DeclareUnicodeCharacter{211D}{\leanmath{\mathbb{R}}}
  \DeclareUnicodeCharacter{2192}{\leanoperator{\to}}
  \DeclareUnicodeCharacter{2203}{\leanoperator{\exists}}
  \DeclareUnicodeCharacter{2200}{\leanoperator{\forall}}
  \DeclareUnicodeCharacter{2227}{\leanoperator{\wedge}}
  \DeclareUnicodeCharacter{2208}{\leanoperator{\in}}
  \DeclareUnicodeCharacter{2211}{\leanoperator{\sum}}
  \DeclareUnicodeCharacter{220F}{\leanoperator{\prod}}
  \DeclareUnicodeCharacter{03A6}{\leanmath{\Phi}}
  \DeclareUnicodeCharacter{03B3}{\leanmath{\gamma}}
  \DeclareUnicodeCharacter{03B5}{\leanmath{\varepsilon}}
  \DeclareUnicodeCharacter{03B8}{\leanmath{\theta}}
  \DeclareUnicodeCharacter{2016}{\leanoperator{\Vert}}
  \DeclareUnicodeCharacter{2260}{\leanoperator{\ne}}
  \DeclareUnicodeCharacter{2264}{\leanoperator{\le}}
  \DeclareUnicodeCharacter{2022}{\leanoperator{\bullet}}
  \DeclareUnicodeCharacter{207B}{\leanmath{{}^{-}}}
  \DeclareUnicodeCharacter{00B9}{\leanmath{{}^{1}}}
  \DeclareUnicodeCharacter{00AC}{\leanoperator{\neg}}
  \DeclareUnicodeCharacter{00D7}{\leanoperator{\times}}
  \DeclareUnicodeCharacter{02E2}{\leanmath{{}^{\mathrm{s}}}}
  \DeclareUnicodeCharacter{0393}{\leanmath{\Gamma}}
  \DeclareUnicodeCharacter{03A9}{\leanmath{\Omega}}
  \DeclareUnicodeCharacter{03C9}{\leanmath{\omega}}
  \DeclareUnicodeCharacter{1D62}{\leanmath{{}_{\mathrm{i}}}}
  \DeclareUnicodeCharacter{2080}{\leanmath{{}_{0}}}
  \DeclareUnicodeCharacter{2097}{\leanmath{{}_{\mathrm{l}}}}
  \DeclareUnicodeCharacter{2194}{\leanoperator{\leftrightarrow}}
  \DeclareUnicodeCharacter{221E}{\leanoperator{\infty}}
  \DeclareUnicodeCharacter{2228}{\leanoperator{\vee}}
  \DeclareUnicodeCharacter{2229}{\leanoperator{\cap}}
  \DeclareUnicodeCharacter{2243}{\leanoperator{\simeq}}
  \DeclareUnicodeCharacter{2265}{\leanoperator{\ge}}
  \DeclareUnicodeCharacter{2286}{\leanoperator{\subseteq}}
  \DeclareUnicodeCharacter{27E8}{\leanmath{\langle}}
  \DeclareUnicodeCharacter{27E9}{\leanmath{\rangle}}
\else
  \usepackage[no-math]{fontspec}
\fi

\usepackage{fvextra,float}
\def\PYG@reset{\let\PYG@it=\relax \let\PYG@bf=\relax%
    \let\PYG@ul=\relax \let\PYG@tc=\relax%
    \let\PYG@bc=\relax \let\PYG@ff=\relax}
\def\PYG@tok#1{\csname PYG@tok@#1\endcsname}
\def\PYG@toks#1+{\ifx\relax#1\empty\else%
    \PYG@tok{#1}\expandafter\PYG@toks\fi}
\def\PYG@do#1{\PYG@bc{\PYG@tc{\PYG@ul{%
    \PYG@it{\PYG@bf{\PYG@ff{#1}}}}}}}
\def\PYG#1#2{\PYG@reset\PYG@toks#1+\relax+\PYG@do{#2}}

\@namedef{PYG@tok@w}{\def\PYG@tc##1{\textcolor[rgb]{0.97,0.97,0.97}{##1}}}
\@namedef{PYG@tok@err}{\def\PYG@tc##1{\textcolor[rgb]{0.64,0.00,0.00}{##1}}\def\PYG@bc##1{{\setlength{\fboxsep}{\string -\fboxrule}\fcolorbox[rgb]{0.94,0.16,0.16}{1,1,1}{\strut ##1}}}}
\@namedef{PYG@tok@x}{\def\PYG@tc##1{\textcolor[rgb]{0.00,0.00,0.00}{##1}}}
\@namedef{PYG@tok@c}{\let\PYG@it=\textit\def\PYG@tc##1{\textcolor[rgb]{0.56,0.35,0.01}{##1}}}
\@namedef{PYG@tok@cm}{\let\PYG@it=\textit\def\PYG@tc##1{\textcolor[rgb]{0.56,0.35,0.01}{##1}}}
\@namedef{PYG@tok@cp}{\let\PYG@it=\textit\def\PYG@tc##1{\textcolor[rgb]{0.56,0.35,0.01}{##1}}}
\@namedef{PYG@tok@c1}{\let\PYG@it=\textit\def\PYG@tc##1{\textcolor[rgb]{0.56,0.35,0.01}{##1}}}
\@namedef{PYG@tok@cs}{\let\PYG@it=\textit\def\PYG@tc##1{\textcolor[rgb]{0.56,0.35,0.01}{##1}}}
\@namedef{PYG@tok@k}{\let\PYG@bf=\textbf\def\PYG@tc##1{\textcolor[rgb]{0.13,0.29,0.53}{##1}}}
\@namedef{PYG@tok@kc}{\let\PYG@bf=\textbf\def\PYG@tc##1{\textcolor[rgb]{0.13,0.29,0.53}{##1}}}
\@namedef{PYG@tok@kd}{\let\PYG@bf=\textbf\def\PYG@tc##1{\textcolor[rgb]{0.13,0.29,0.53}{##1}}}
\@namedef{PYG@tok@kn}{\let\PYG@bf=\textbf\def\PYG@tc##1{\textcolor[rgb]{0.13,0.29,0.53}{##1}}}
\@namedef{PYG@tok@kp}{\let\PYG@bf=\textbf\def\PYG@tc##1{\textcolor[rgb]{0.13,0.29,0.53}{##1}}}
\@namedef{PYG@tok@kr}{\let\PYG@bf=\textbf\def\PYG@tc##1{\textcolor[rgb]{0.13,0.29,0.53}{##1}}}
\@namedef{PYG@tok@kt}{\let\PYG@bf=\textbf\def\PYG@tc##1{\textcolor[rgb]{0.13,0.29,0.53}{##1}}}
\@namedef{PYG@tok@o}{\let\PYG@bf=\textbf\def\PYG@tc##1{\textcolor[rgb]{0.81,0.36,0.00}{##1}}}
\@namedef{PYG@tok@ow}{\let\PYG@bf=\textbf\def\PYG@tc##1{\textcolor[rgb]{0.13,0.29,0.53}{##1}}}
\@namedef{PYG@tok@p}{\let\PYG@bf=\textbf\def\PYG@tc##1{\textcolor[rgb]{0.00,0.00,0.00}{##1}}}
\@namedef{PYG@tok@n}{\def\PYG@tc##1{\textcolor[rgb]{0.00,0.00,0.00}{##1}}}
\@namedef{PYG@tok@na}{\def\PYG@tc##1{\textcolor[rgb]{0.77,0.63,0.00}{##1}}}
\@namedef{PYG@tok@nb}{\def\PYG@tc##1{\textcolor[rgb]{0.13,0.29,0.53}{##1}}}
\@namedef{PYG@tok@bp}{\def\PYG@tc##1{\textcolor[rgb]{0.20,0.40,0.64}{##1}}}
\@namedef{PYG@tok@nc}{\def\PYG@tc##1{\textcolor[rgb]{0.00,0.00,0.00}{##1}}}
\@namedef{PYG@tok@no}{\def\PYG@tc##1{\textcolor[rgb]{0.00,0.00,0.00}{##1}}}
\@namedef{PYG@tok@nd}{\let\PYG@bf=\textbf\def\PYG@tc##1{\textcolor[rgb]{0.36,0.21,0.80}{##1}}}
\@namedef{PYG@tok@ni}{\def\PYG@tc##1{\textcolor[rgb]{0.81,0.36,0.00}{##1}}}
\@namedef{PYG@tok@ne}{\let\PYG@bf=\textbf\def\PYG@tc##1{\textcolor[rgb]{0.80,0.00,0.00}{##1}}}
\@namedef{PYG@tok@nf}{\def\PYG@tc##1{\textcolor[rgb]{0.00,0.00,0.00}{##1}}}
\@namedef{PYG@tok@py}{\def\PYG@tc##1{\textcolor[rgb]{0.00,0.00,0.00}{##1}}}
\@namedef{PYG@tok@nl}{\def\PYG@tc##1{\textcolor[rgb]{0.96,0.47,0.00}{##1}}}
\@namedef{PYG@tok@nn}{\def\PYG@tc##1{\textcolor[rgb]{0.00,0.00,0.00}{##1}}}
\@namedef{PYG@tok@nx}{\def\PYG@tc##1{\textcolor[rgb]{0.00,0.00,0.00}{##1}}}
\@namedef{PYG@tok@nt}{\let\PYG@bf=\textbf\def\PYG@tc##1{\textcolor[rgb]{0.13,0.29,0.53}{##1}}}
\@namedef{PYG@tok@nv}{\def\PYG@tc##1{\textcolor[rgb]{0.00,0.00,0.00}{##1}}}
\@namedef{PYG@tok@vc}{\def\PYG@tc##1{\textcolor[rgb]{0.00,0.00,0.00}{##1}}}
\@namedef{PYG@tok@vg}{\def\PYG@tc##1{\textcolor[rgb]{0.00,0.00,0.00}{##1}}}
\@namedef{PYG@tok@vi}{\def\PYG@tc##1{\textcolor[rgb]{0.00,0.00,0.00}{##1}}}
\@namedef{PYG@tok@l}{\def\PYG@tc##1{\textcolor[rgb]{0.00,0.00,0.00}{##1}}}
\@namedef{PYG@tok@m}{\let\PYG@bf=\textbf\def\PYG@tc##1{\textcolor[rgb]{0.00,0.00,0.81}{##1}}}
\@namedef{PYG@tok@mf}{\let\PYG@bf=\textbf\def\PYG@tc##1{\textcolor[rgb]{0.00,0.00,0.81}{##1}}}
\@namedef{PYG@tok@mh}{\let\PYG@bf=\textbf\def\PYG@tc##1{\textcolor[rgb]{0.00,0.00,0.81}{##1}}}
\@namedef{PYG@tok@mi}{\let\PYG@bf=\textbf\def\PYG@tc##1{\textcolor[rgb]{0.00,0.00,0.81}{##1}}}
\@namedef{PYG@tok@il}{\let\PYG@bf=\textbf\def\PYG@tc##1{\textcolor[rgb]{0.00,0.00,0.81}{##1}}}
\@namedef{PYG@tok@mo}{\let\PYG@bf=\textbf\def\PYG@tc##1{\textcolor[rgb]{0.00,0.00,0.81}{##1}}}
\@namedef{PYG@tok@ld}{\def\PYG@tc##1{\textcolor[rgb]{0.00,0.00,0.00}{##1}}}
\@namedef{PYG@tok@s}{\def\PYG@tc##1{\textcolor[rgb]{0.31,0.60,0.02}{##1}}}
\@namedef{PYG@tok@sb}{\def\PYG@tc##1{\textcolor[rgb]{0.31,0.60,0.02}{##1}}}
\@namedef{PYG@tok@sc}{\def\PYG@tc##1{\textcolor[rgb]{0.31,0.60,0.02}{##1}}}
\@namedef{PYG@tok@sd}{\let\PYG@it=\textit\def\PYG@tc##1{\textcolor[rgb]{0.56,0.35,0.01}{##1}}}
\@namedef{PYG@tok@s2}{\def\PYG@tc##1{\textcolor[rgb]{0.31,0.60,0.02}{##1}}}
\@namedef{PYG@tok@se}{\def\PYG@tc##1{\textcolor[rgb]{0.31,0.60,0.02}{##1}}}
\@namedef{PYG@tok@sh}{\def\PYG@tc##1{\textcolor[rgb]{0.31,0.60,0.02}{##1}}}
\@namedef{PYG@tok@si}{\def\PYG@tc##1{\textcolor[rgb]{0.31,0.60,0.02}{##1}}}
\@namedef{PYG@tok@sx}{\def\PYG@tc##1{\textcolor[rgb]{0.31,0.60,0.02}{##1}}}
\@namedef{PYG@tok@sr}{\def\PYG@tc##1{\textcolor[rgb]{0.31,0.60,0.02}{##1}}}
\@namedef{PYG@tok@s1}{\def\PYG@tc##1{\textcolor[rgb]{0.31,0.60,0.02}{##1}}}
\@namedef{PYG@tok@ss}{\def\PYG@tc##1{\textcolor[rgb]{0.31,0.60,0.02}{##1}}}
\@namedef{PYG@tok@g}{\def\PYG@tc##1{\textcolor[rgb]{0.00,0.00,0.00}{##1}}}
\@namedef{PYG@tok@gd}{\def\PYG@tc##1{\textcolor[rgb]{0.64,0.00,0.00}{##1}}}
\@namedef{PYG@tok@ge}{\let\PYG@it=\textit\def\PYG@tc##1{\textcolor[rgb]{0.00,0.00,0.00}{##1}}}
\@namedef{PYG@tok@gr}{\def\PYG@tc##1{\textcolor[rgb]{0.94,0.16,0.16}{##1}}}
\@namedef{PYG@tok@gh}{\let\PYG@bf=\textbf\def\PYG@tc##1{\textcolor[rgb]{0.00,0.00,0.50}{##1}}}
\@namedef{PYG@tok@gi}{\def\PYG@tc##1{\textcolor[rgb]{0.00,0.63,0.00}{##1}}}
\@namedef{PYG@tok@go}{\let\PYG@it=\textit\def\PYG@tc##1{\textcolor[rgb]{0.00,0.00,0.00}{##1}}}
\@namedef{PYG@tok@gp}{\def\PYG@tc##1{\textcolor[rgb]{0.56,0.35,0.01}{##1}}}
\@namedef{PYG@tok@gs}{\let\PYG@bf=\textbf\def\PYG@tc##1{\textcolor[rgb]{0.00,0.00,0.00}{##1}}}
\@namedef{PYG@tok@ges}{\let\PYG@bf=\textbf\let\PYG@it=\textit\def\PYG@tc##1{\textcolor[rgb]{0.00,0.00,0.00}{##1}}}
\@namedef{PYG@tok@gu}{\let\PYG@bf=\textbf\def\PYG@tc##1{\textcolor[rgb]{0.50,0.00,0.50}{##1}}}
\@namedef{PYG@tok@gt}{\let\PYG@bf=\textbf\def\PYG@tc##1{\textcolor[rgb]{0.64,0.00,0.00}{##1}}}
\@namedef{PYG@tok@fm}{\def\PYG@tc##1{\textcolor[rgb]{0.00,0.00,0.00}{##1}}}
\@namedef{PYG@tok@vm}{\def\PYG@tc##1{\textcolor[rgb]{0.00,0.00,0.00}{##1}}}
\@namedef{PYG@tok@sa}{\def\PYG@tc##1{\textcolor[rgb]{0.31,0.60,0.02}{##1}}}
\@namedef{PYG@tok@dl}{\def\PYG@tc##1{\textcolor[rgb]{0.31,0.60,0.02}{##1}}}
\@namedef{PYG@tok@mb}{\let\PYG@bf=\textbf\def\PYG@tc##1{\textcolor[rgb]{0.00,0.00,0.81}{##1}}}
\@namedef{PYG@tok@pm}{\let\PYG@bf=\textbf\def\PYG@tc##1{\textcolor[rgb]{0.00,0.00,0.00}{##1}}}
\@namedef{PYG@tok@ch}{\let\PYG@it=\textit\def\PYG@tc##1{\textcolor[rgb]{0.56,0.35,0.01}{##1}}}
\@namedef{PYG@tok@cpf}{\let\PYG@it=\textit\def\PYG@tc##1{\textcolor[rgb]{0.56,0.35,0.01}{##1}}}

\def\PYGZus{\char`\_}

\def\PYGZca{\char`\^}

\def\PYGZlt{\char`\<}
\def\PYGZgt{\char`\>}

\def\PYGZhy{\char`\-}

\newcommand{\lean@operatorcolor}[1]{\textcolor[RGB]{24,45,75}{#1}}
\newcommand{\lean@tokenstyle}{%
  \@namedef{PYG@tok@bp}{%
    \let\PYG@bf\textbf\let\PYG@tc\lean@operatorcolor}%
  \@namedef{PYG@tok@err}{%
    \let\PYG@bf\textbf\let\PYG@tc\lean@operatorcolor}}

\begingroup
\catcode`\'=\active

\endgroup
\def\PYGZhy{\mbox{-}}
\VerbatimPygments{\PYG}{\PYG}
\DefineVerbatimEnvironment{leancode}{Verbatim}{%
  commandchars=\\\{\},breaklines,fontsize=\normalsize,
  formatcom=\lean@tokenstyle}
\newcommand{\lean}{\Verb[commandchars=\\\{\},bgcolor=white,
  formatcom=\lean@tokenstyle]}
\makeatother

\ifPDFTeX
  \DeclareUnicodeCharacter{03C3}{\leanmath{\sigma}}
  \DeclareUnicodeCharacter{2081}{\leanmath{{}_1}}
  \DeclareUnicodeCharacter{2082}{\leanmath{{}_2}}
  \DeclareUnicodeCharacter{221A}{\leanoperator{\surd}}
\fi

\hypersetup{pdftitle={Logarithmic and Coulomb energies of eight points on the sphere},
 pdfauthor={Liudmyla Kryvonos, Lukas Liehr, Mitchell A. Taylor},
 pdfsubject={Logarithmic and Coulomb energy minimization}}
 
\newtheorem{theorem}{Theorem}[section]
\newtheorem{proposition}[theorem]{Proposition}
\newtheorem{lemma}[theorem]{Lemma}
\newtheorem{corollary}[theorem]{Corollary}
\theoremstyle{definition}
\newtheorem{statement}[theorem]{Statement}
\newtheorem{remark}[theorem]{Remark}
\numberwithin{equation}{section}
\newcommand{\R}{\mathbb R}
\newcommand{\Q}{\mathbb Q}
\newcommand{\Sph}{\mathbb S^2}
\newcommand{\Eh}{\widehat E}
\newcommand{\Sym}{\operatorname{Sym}}
\newcommand{\tr}{\operatorname{tr}}
\newcommand{\norm}[1]{\lVert#1\rVert}
\newcommand{\ip}[2]{\langle#1,#2\rangle}
\newcommand{\dd}{\mathrm d}
\newcommand{\doi}[1]{\href{https://doi.org/#1}{\nolinkurl{#1}}}
\setlist[enumerate]{label=\textnormal{(\roman*)},leftmargin=2em}
\title{Energy minimization for eight points on the sphere}

\author[Liudmyla Kryvonos]{Liudmyla Kryvonos}
\address{Department of Mathematics \& Statistics, 
	University of North Florida, Jacksonville, FL 32224, USA}
\email{liudmyla.kryvonos@unf.edu}

\author[Lukas Liehr]{Lukas Liehr}
\address{Department of Mathematics, Bar-Ilan University, Ramat-Gan 5290002, Israel}
\email{lukas.liehr@biu.ac.il}

\author[Mitchell~A.~Taylor]{Mitchell A. Taylor}
\address{Mathematical Institute, University of Oxford, Andrew Wiles Building, Radcliffe Observatory Quarter, Woodstock Road, Oxford, OX2 6GG, United Kingdom}
\email{mitchtaylor@shaw.ca}

\date{}

\begin{document}

\raggedbottom

\begin{abstract}
We study the energy minimization problem for eight points on the unit sphere. For the logarithmic and Coulomb energies, we show that the unique global minimizer up to congruence is a square antiprism with height characterized by a
unique stationarity equation. The
proof is computer-assisted and fully verified in Lean. After this, we consider generalizations of the result to other important energies.
For the Riesz $s$-energies, we provide a Lean-verified, non-computer-assisted proof that the square antiprism with height depending on $s$
is the unique global minimizer for all sufficiently large $s$, and a computer-assisted proof that this in fact holds for all $s\geq 0$. We also give examples of energies arising from completely monotonic potentials for which the square antiprism is not a global minimizer, answering in the negative a universality question of Cohn and Woo.
\end{abstract}

\maketitle

\section{Introduction}\label{sec:introduction}
For $\Sph=\{x\in\R^3:\norm{x}=1\}$ and $N$ points
$Y=(y_1,\ldots,y_N)\in(\Sph)^N$, we consider the discrete Riesz $s$-energies
\begin{equation}\label{eq:energy}
 E_s(Y)=\sum_{1\le i<j\le N}K_s(y_i,y_j),\qquad
 K_s(x,y)=
 \begin{cases}
 \norm{x-y}^{-s},&s>0,\\
 -\log\norm{x-y},&s=0,
 \end{cases}
\end{equation}
 where $\log$ denotes the natural logarithm and $\norm{\cdot}$ denotes the Euclidean norm.
The classical \textit{Thomson problem}, corresponding to the case $s=1$, is to determine the configuration of $N$ points on $\Sph$ that minimizes the Coulomb energy \cite{Tho04}. This naturally leads to the more general problem of minimizing the Riesz $s$-energy, i.e., computing
\[
 \mathcal E_s(N)=\min_{Y\in(\Sph)^N}E_s(Y).
\]
Note that this quantity exists by lower semicontinuity and compactness.  We seek both the exact minimum and a characterization of all minimizing configurations.   Our main results concern the eight-point problem for the logarithmic energy
$E_0$ and the Coulomb energy $E_1$. Further results for all other values of $s$ are treated at the end of the paper.

The known solutions of the Coulomb problem include the antipodal pair for $N=2$, the equilateral triangle for $N=3$, the regular tetrahedron for $N=4$, the regular octahedron for $N=6$, and the regular icosahedron for $N=12$. Earlier results include the Coulomb bounds of Yudin \cite{Yud93}, the logarithmic octahedron result of Kolushov and Yudin \cite{KolY97}, and Andreev's work on the icosahedron
\cite{And96}. In fact, these configurations minimize not only the Coulomb energy, but also the logarithmic energy and every positive Riesz energy. They fall within the general theory of universal optimality developed by Cohn and Kumar \cite{CohK07}.

The five-point problem shows that this independence of the kernel is a special property. Dragnev, Legg, and Townsend \cite{DLT02} proved that the triangular bipyramid is the unique logarithmic minimizer, up to orthogonal transformations. Schwartz \cite{Sch13} established the Coulomb case by a computer-assisted proof. His subsequent work \cite{Sch16} identifies a phase-transition value $S_{5} \approx 15.048$: the triangular bipyramid is optimal for $0 < s \leq S_{5}$, whereas a square-based pyramid becomes optimal immediately beyond the transition; see also \cite{Sch23}.
Table \ref{tab:smallN} summarizes the logarithmic and positive Riesz cases relevant here, together with selected references, some of which concern only particular values of $s$.

\begin{table}[!htbp]
 \centering
 \begin{minipage}{0.78\textwidth}
 \centering
 \small
 \renewcommand{\arraystretch}{1.15}
 \begin{tabular}{@{}clll@{}}
  \toprule
  $N$ & Configuration & Optimality Range & References\\
  \midrule
  $2$ & Antipodal pair & $s\geq 0$ & \cite{CohK07}\\
  $3$ & Equilateral triangle & $s\geq0$ & \cite{CohK07}\\
  $4$ & Regular tetrahedron & $s\geq0$ & \cite{CohK07,KolY97,Yud93}\\
  $5$ & Triangular bipyramid & $0\leq s\leq S_5$ & \cite{DLT02,Sch13,Sch16}\\
  $6$ & Regular octahedron & $s\geq 0$ & \cite{CohK07,KolY97,Yud93}\\
  $12$ & Regular icosahedron & $s\geq0$ & \cite{And96,CohK07}\\
  \bottomrule
 \end{tabular}
 \vspace{0.3cm}
 \caption{Known globally minimizing configurations on $\Sph$. Here $S_5\approx 15.048$ denotes the critical value of $s$ for the five-point problem. For $N=2,3,4,6,$ and $12$, universal optimality implies global minimality for the logarithmic energy and for every positive Riesz energy \cite{CohK07}.}
 \label{tab:smallN}
 \end{minipage}
\end{table}

\subsection{Optimality of the antiprism}
For $N=8$, Cohn and Woo
\cite[Conjecture~14]{CW} conjectured that every completely monotonic
potential of the squared distance is minimized by a square antiprism, with height depending on the potential. 
 They found numerical evidence supporting their conjecture, but converting these numerical results into a rigorous proof is delicate. 
We prove this conclusion, with
uniqueness up to congruence, for the logarithmic and Coulomb energies and for various other Riesz energies, and show that, in general, the conjecture is not true.

A square antiprism consists of two squares lying in parallel planes, with one rotated by $\pi/4$ relative to the other. More precisely,
for $0<a<1$, set $r_a=\sqrt{1-a}$, $h_a=\sqrt a$, and define
\begin{equation}\label{eq:family}
\begin{split}
 X(a)={}&\left\{\left(r_a\cos\frac{j\pi}{2},
 r_a\sin\frac{j\pi}{2},h_a\right):0\le j\le3\right\}\\
 &\cup\left\{\left(r_a\cos\left(\frac\pi4+\frac{j\pi}{2}\right),
 r_a\sin\left(\frac\pi4+\frac{j\pi}{2}\right),-h_a\right):0\le j\le3\right\},
\end{split}
\end{equation}
where $a$ is the squared half-height of the antiprism. We label the vertices in the displayed order.
Figure~\ref{fig:antiprism} shows the configuration and the relative
rotation of its two squares.

\begin{figure}
\centering
\begin{tikzpicture}[line cap=round,line join=round,font=\small]
\path[use as bounding box] (-6.6,-3.25) rectangle (6.6,3.45);
\node at (-3.65,3.18) {\textnormal{(a)} Perspective};
\node at (3.65,3.18) {\textnormal{(b)} View from above};
\draw[fill=figurePale!14,draw=figureInk!24,line width=0.45pt] (-3.650000,0.000000) circle[radius=2.550000cm];
\draw[figureInk!18,line width=0.35pt,dash pattern=on 1.8pt off 2pt] (-3.650000,0.000000) ellipse[x radius=2.550000cm,y radius=0.830199cm];
\draw[figureOchre!24,line width=0.35pt] (-3.650000,-1.351254) ellipse[x radius=2.111903cm,y radius=0.687568cm];
\fill[figureOchre!7] (-1.564098,-1.243694) -- (-3.980374,-0.672150) -- (-5.735902,-1.458813) -- (-3.319626,-2.030357) -- cycle;
\draw[figureBlue!24,line width=0.35pt] (-3.650000,1.351254) ellipse[x radius=2.111903cm,y radius=0.687568cm];
\fill[figureBlue!7] (-1.941435,0.947111) -- (-2.408655,1.907508) -- (-5.358565,1.755396) -- (-4.891345,0.794999) -- cycle;
\draw[white,line width=1.35pt] (-2.408655,1.907508) -- (-3.980374,-0.672150);
\draw[figureInk!42,line width=0.65pt] (-2.408655,1.907508) -- (-3.980374,-0.672150);
\draw[white,line width=1.80pt] (-1.564098,-1.243694) -- (-3.980374,-0.672150);
\draw[figureOchre!42,line width=1.10pt] (-1.564098,-1.243694) -- (-3.980374,-0.672150);
\draw[white,line width=1.35pt] (-3.980374,-0.672150) -- (-5.358565,1.755396);
\draw[figureInk!42,line width=0.65pt] (-3.980374,-0.672150) -- (-5.358565,1.755396);
\draw[white,line width=1.80pt] (-3.980374,-0.672150) -- (-5.735902,-1.458813);
\draw[figureOchre!42,line width=1.10pt] (-3.980374,-0.672150) -- (-5.735902,-1.458813);
\draw[white,line width=1.35pt] (-1.564098,-1.243694) -- (-2.408655,1.907508);
\draw[figureInk!42,line width=0.65pt] (-1.564098,-1.243694) -- (-2.408655,1.907508);
\draw[white,line width=1.80pt] (-2.408655,1.907508) -- (-5.358565,1.755396);
\draw[figureBlue!42,line width=1.10pt] (-2.408655,1.907508) -- (-5.358565,1.755396);
\draw[white,line width=1.35pt] (-5.358565,1.755396) -- (-5.735902,-1.458813);
\draw[figureInk!42,line width=0.65pt] (-5.358565,1.755396) -- (-5.735902,-1.458813);
\draw[white,line width=1.80pt] (-1.941435,0.947111) -- (-2.408655,1.907508);
\draw[figureBlue!90,line width=1.10pt] (-1.941435,0.947111) -- (-2.408655,1.907508);
\draw[white,line width=1.80pt] (-3.319626,-2.030357) -- (-1.564098,-1.243694);
\draw[figureOchre!90,line width=1.10pt] (-3.319626,-2.030357) -- (-1.564098,-1.243694);
\draw[white,line width=1.35pt] (-1.941435,0.947111) -- (-1.564098,-1.243694);
\draw[figureInk!90,line width=0.65pt] (-1.941435,0.947111) -- (-1.564098,-1.243694);
\draw[white,line width=1.80pt] (-5.735902,-1.458813) -- (-3.319626,-2.030357);
\draw[figureOchre!90,line width=1.10pt] (-5.735902,-1.458813) -- (-3.319626,-2.030357);
\draw[white,line width=1.80pt] (-5.358565,1.755396) -- (-4.891345,0.794999);
\draw[figureBlue!90,line width=1.10pt] (-5.358565,1.755396) -- (-4.891345,0.794999);
\draw[white,line width=1.35pt] (-5.735902,-1.458813) -- (-4.891345,0.794999);
\draw[figureInk!90,line width=0.65pt] (-5.735902,-1.458813) -- (-4.891345,0.794999);
\draw[white,line width=1.35pt] (-3.319626,-2.030357) -- (-1.941435,0.947111);
\draw[figureInk!90,line width=0.65pt] (-3.319626,-2.030357) -- (-1.941435,0.947111);
\draw[white,line width=1.35pt] (-4.891345,0.794999) -- (-3.319626,-2.030357);
\draw[figureInk!90,line width=0.65pt] (-4.891345,0.794999) -- (-3.319626,-2.030357);
\draw[white,line width=1.80pt] (-4.891345,0.794999) -- (-1.941435,0.947111);
\draw[figureBlue!90,line width=1.10pt] (-4.891345,0.794999) -- (-1.941435,0.947111);
\filldraw[fill=figureOchre!65,draw=white,line width=0.6pt] (-3.980374,-0.672150) circle[radius=2.6pt];
\filldraw[fill=figureBlue!65,draw=white,line width=0.6pt] (-2.408655,1.907508) circle[radius=2.6pt];
\filldraw[fill=figureOchre!65,draw=white,line width=0.6pt] (-1.564098,-1.243694) circle[radius=2.6pt];
\filldraw[fill=figureBlue!65,draw=white,line width=0.6pt] (-5.358565,1.755396) circle[radius=2.6pt];
\filldraw[fill=figureOchre!65,draw=white,line width=0.6pt] (-5.735902,-1.458813) circle[radius=2.6pt];
\filldraw[fill=figureOchre!100,draw=white,line width=0.6pt] (-3.319626,-2.030357) circle[radius=2.6pt];
\filldraw[fill=figureBlue!100,draw=white,line width=0.6pt] (-1.941435,0.947111) circle[radius=2.6pt];
\filldraw[fill=figureBlue!100,draw=white,line width=0.6pt] (-4.891345,0.794999) circle[radius=2.6pt];
\draw[fill=figurePale!10,draw=figureInk!24,line width=0.45pt] (3.650000,0.000000) circle[radius=2.550000cm];
\draw[figureInk!32,line width=0.4pt,dash pattern=on 1.8pt off 2pt] (3.650000,0.000000) circle[radius=2.111903cm];
\path[fill=figureOchre!5] (5.143341,1.493341) -- (2.156659,1.493341) -- (2.156659,-1.493341) -- (5.143341,-1.493341) -- cycle;
\path[fill=figureBlue!5] (5.761903,0.000000) -- (3.650000,2.111903) -- (1.538097,0.000000) -- (3.650000,-2.111903) -- cycle;
\draw[figureOchre,line width=1.1pt] (5.143341,1.493341) -- (2.156659,1.493341) -- (2.156659,-1.493341) -- (5.143341,-1.493341) -- cycle;
\draw[figureBlue,line width=1.1pt] (5.761903,0.000000) -- (3.650000,2.111903) -- (1.538097,0.000000) -- (3.650000,-2.111903) -- cycle;
\draw[figureInk!58,line width=0.45pt] (3.650000,0.000000) -- (5.761903,0.000000);
\draw[figureInk!58,line width=0.45pt] (3.650000,0.000000) -- (5.143341,1.493341);
\draw[figureInk!90,line width=0.65pt,->,>=stealth] (4.310000,0.000000) arc[start angle=0,end angle=45,radius=0.660000cm];
\node[inner sep=1pt,font=\footnotesize] at (4.712461,0.440086) {$\pi/4$};
\fill[figureInk!65] (3.650000,0.000000) circle[radius=0.9pt];
\filldraw[fill=figureBlue,draw=white,line width=0.6pt] (5.761903,0.000000) circle[radius=2.6pt];
\filldraw[fill=figureOchre,draw=white,line width=0.6pt] (5.143341,1.493341) circle[radius=2.6pt];
\filldraw[fill=figureBlue,draw=white,line width=0.6pt] (3.650000,2.111903) circle[radius=2.6pt];
\filldraw[fill=figureOchre,draw=white,line width=0.6pt] (2.156659,1.493341) circle[radius=2.6pt];
\filldraw[fill=figureBlue,draw=white,line width=0.6pt] (1.538097,0.000000) circle[radius=2.6pt];
\filldraw[fill=figureOchre,draw=white,line width=0.6pt] (2.156659,-1.493341) circle[radius=2.6pt];
\filldraw[fill=figureBlue,draw=white,line width=0.6pt] (3.650000,-2.111903) circle[radius=2.6pt];
\filldraw[fill=figureOchre,draw=white,line width=0.6pt] (5.143341,-1.493341) circle[radius=2.6pt];
\node[figureInk!95] at (-3.65,-3.00) {$z=\pm\sqrt a$};
\node[figureInk!95] at (3.65,-3.00) {$r=\sqrt{1-a}$};
\end{tikzpicture}
\caption{The square antiprism $X(a)$, drawn at the Coulomb parameter
$a=a_1$. The upper square is blue and the lower square is brown.
The view from above shows their relative angle $\pi/4$.
Gray segments in figure (a) join nearest vertices on
opposite squares.}
\label{fig:antiprism}
\end{figure}
Among the $28$ pairs of vertices of $X(a)$, we distinguish four distance types: two within each square and two between the squares. Their squared values and multiplicities are recorded in the following array:
\begin{equation}\label{eq:distances}
\begin{array}{c|c|c}
\text{type}&\text{squared distance}&\text{multiplicity}\\\hline
 A&d_A=2(1-a)&8\\
 B&d_B=4(1-a)&4\\
 C&d_C=2-\sqrt2+(2+\sqrt2)a&8\\
 D&d_D=2+\sqrt2+(2-\sqrt2)a&8
\end{array}
\end{equation}
The critical points of the energy within the family $X(a)$ are characterized by the equation $F(s,a) =0$, where, with $q_s=1+s/2$, 
\begin{equation}\label{eq:F}
 F(s,a)=16d_A^{-q_s}+16d_B^{-q_s}
       -8(2+\sqrt2)d_C^{-q_s}-8(2-\sqrt2)d_D^{-q_s}.
\end{equation}
For every $s\ge0$, the equation has a unique solution
$a_s\in(0,1)$, as we prove in Proposition~\ref{prop:parameter}. The first main result of this article settles the energy minimization problem for the logarithmic and Coulomb potentials.

\begin{theorem}\label{thm:global}
For $s\in\{0,1\}$ and every $Y\in(\Sph)^8$, one has
\begin{equation}\label{eq:global}
 E_s(Y)\ge E_s(X(a_s))
\end{equation}
Moreover, equality holds if and only if $Y$ is congruent to $X(a_s)$.
\end{theorem}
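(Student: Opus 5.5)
The proof will follow Schwartz's computer-assisted strategy for five points, adapted to eight points and organised so that every numerical step can be checked in Lean with exact rational interval arithmetic.

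\textbf{Normalisation.} The energy is invariant under $O(3)$. We therefore fix $y_8$ at the north pole and $y_7$ in a fixed half-plane. The configuration is then described by a point of a compact parameter space, using stereographic or spherical coordinates for $y_1,\dots,y_6$ and a single angle for $y_7$.

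\textbf{A priori separation.} We compute the upper bound $E_s(X(a_s))$ to high precision; Proposition~\ref{prop:parameter} locates $a_s$ rigorously. Since all other pair terms are bounded below, this bound gives a lower bound $\norm{y_i-y_j}\ge\delta$ for every pair in any configuration that could beat $X(a_s)$. For $s=0$ the other terms are $\ge-\log 2$; for $s=1$ they are $\ge 1/2$. The kernels are then Lipschitz on the region that remains, with explicit constants.

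\textbf{Branch-and-bound exclusion.} We cover the parameter space by dyadic boxes. On each box $B$ we bound $E_s$ from below by its value at the centre minus a Lipschitz or second-order Taylor remainder. The box is discarded whenever this lower bound exceeds $E_s(X(a_s))$. To make this finite we quotient by the residual symmetry: relabelling of the points and the choice of which point sits at the pole. For the pole we choose the point whose nearest-neighbour configuration is extremal, which shrinks the domain. After the subdivision, the only surviving boxes are small neighbourhoods of the configurations congruent to $X(a_s)$ under the finitely many labellings.

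\textbf{Local analysis near the antiprism.} In a neighbourhood of $X(a_s)$ we show that $X(a_s)$ is a strict local minimum modulo rotations. First, it is a critical point. Within the family $X(a)$ this is exactly $F(s,a_s)=0$. Transversally it follows from the symmetry group of the antiprism: the gradient must be invariant under $D_{4d}$, so it is tangent to the family. Second, the Hessian restricted to the complement of the three-dimensional rotation directions is positive definite. We check this by verifying a Gershgorin-type or Cholesky certificate with interval entries. Third, we extend this to a whole box by bounding the third derivatives. The Hessian is then positive definite on a convex neighbourhood that contains the surviving boxes. This gives uniqueness there via strict convexity in suitable coordinates modulo rotation, and hence the equality statement.

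\textbf{Main obstacle.} The hardest step is the third one. The eight-point parameter space is 13-dimensional, so naive subdivision is hopeless. Two tools should make it feasible. The first is stronger pruning inequalities, for example bounding the energy by sums of energies of sub-configurations, such as the partial energy of $y_8$ with all the others. The second is a preliminary reduction to a lower-dimensional problem. A plausible route for that reduction is to show that a minimiser's point at the pole has a neighbour structure forcing a $4$-fold pattern, via the equilibrium (Lagrange) conditions. The delicate part is choosing these steps so that the certificate stays small enough for Lean to verify.
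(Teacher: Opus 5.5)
Your plan does not yet prove the theorem. All of its global content lies in the branch-and-bound step. You yourself concede that naive subdivision of the $13$-dimensional space is hopeless, and both remedies you propose are left unexecuted. The second remedy, using the equilibrium conditions to force a $4$-fold neighbour pattern at the pole, amounts to classifying the critical configurations and is essentially as hard as the theorem.

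The difficulty is not just bookkeeping, because the landscape has low-lying critical points far from the antiprism orbit. The cube is critical for every $s$, since the stabiliser of a vertex fixes no nonzero tangent vector. Its Coulomb energy $6\sqrt3+3\sqrt6+2\approx19.741$ exceeds $E_1(X(a_1))\approx19.675$ by less than $0.07$. For $s=0$ your a priori separation is only $\delta\approx e^{-8.29}\approx2.5\cdot10^{-4}$, so the Lipschitz constants run into the thousands. The five-point configuration space modulo rotations is $7$-dimensional. You have not shown that a Lipschitz/Taylor cover of a $13$-dimensional domain fine enough to beat such margins is feasible, let alone Lean-checkable. So your claim that only small neighbourhoods of $X(a_s)$ survive is an expected outcome, not an argument. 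Your local step (stationarity by symmetry, interval-certified Hessian positivity modulo rotations, strict convexity nearby) is sound in principle and resembles Proposition~\ref{prop:parameter} and Section~\ref{sec:riesz}. It only disposes of what the exclusion leaves behind.

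The missing idea is a certificate that bounds the energy of \emph{all} configurations at once, so that no configuration space is searched. The paper replaces $\phi_s$ by the degree-$12$ Hermite interpolant $H_s$. This polynomial lies below $\phi_s$ on $[-1,1)$ and touches it exactly at $-1,A,B,C,D$ (Lemma~\ref{lem:minorant}). The paper then proves $E_{H_s}(Y)\ge e_s$ with the Cohn--Woo three-point bound. A single polynomial identity \eqref{eq:certificate} writes $(H_s(u)+H_s(v)+H_s(t))/3-e_s/28$ as $\sum_k\ip{F_k}{R_k}$ plus symmetrised sums of squares. Summing over ordered triples gives $12(E_{H_s}(Y)-e_s)\ge0$, because the moment matrices $\mathcal M_k(Y)$ are positive semidefinite.

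The certificate is finite data: $102$ coefficient equations checked exactly, with the Coulomb coefficients corrected to exactness, plus interval-verified positive definiteness. This is what makes Lean verification feasible. Uniqueness also needs no Hessian analysis. Equality forces all inner products into $\{-1,A,B,C,D\}$, and the node conditions exclude $-1$. At $s=0$, the kernel of $F_0$ together with a vanishing sum-of-squares term then reconstructs the two squares (Proposition~\ref{prop:moment-unique}). At $s=1$, a finite Gram-matrix enumeration does the same (Proposition~\ref{prop:gram-unique}).
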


The minimizing configurations in Theorem \ref{thm:global} have square antiprism symmetry, but their geometry depends on the choice of $s$ and the configurations need not be congruent. The optimal parameters and minimum energies in the logarithmic and Coulomb cases are approximately
\[
\begin{aligned}
 a_0&\approx0.3187923159,&
 E_0(X(a_0))&\approx-10.4280177814,\\
 a_1&\approx0.3140893678,&
 E_1(X(a_1))&\approx19.6752878612.
\end{aligned}
\]
The proof of Theorem \ref{thm:global} is computer-assisted and verified in Lean. The finite certificates and the Lean formalization are
contained in the companion repository \cite{N8Fixed}. The proof idea of Theorem \ref{thm:global} is discussed in Section \ref{sec:proof-guide}.

It is natural to ask whether Theorem~\ref{thm:global} extends to all Riesz exponents $s\geq 0$. Our second main result shows
that this is the case for all
sufficiently large $s$.

\begin{theorem}\label{thm:large-s}
There exists $S>0$ such that, for every $s\ge S$ and every $Y\in(\Sph)^8$, one has
\[
 E_s(Y)\ge E_s(X(a_s)).
\]
Moreover, equality holds if and only if $Y$ is congruent to $X(a_s)$.
\end{theorem}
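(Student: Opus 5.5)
We would prove Theorem~\ref{thm:large-s} by reducing it to the best-packing (Tammes) problem for eight points and then analysing the energy near the optimal packing. As $s\to\infty$, $E_s(Y)^{1/s}\to 1/\delta(Y)$, where $\delta(Y)=\min_{i\ne j}\norm{y_i-y_j}$. The Tammes problem for $N=8$ was solved by Schütte and van der Waerden: the maximal minimal distance $\delta_8$ is attained uniquely, up to congruence, by a square antiprism $X(a_\infty)$ in which all edges of types $A$ and $C$ have length $\delta_8$. Here $a_\infty$ is determined by $d_A=d_C$, that is, $2(1-a)=2-\sqrt2+(2+\sqrt2)a$, so $a_\infty=\sqrt2/(4+\sqrt2)$. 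We also need that $a_s\to a_\infty$ as $s\to\infty$. This follows from $F(s,a_s)=0$ in \eqref{eq:F}: for large $q_s$ the dominant terms are those with the smallest squared distances $d_A$ and $d_C$, and balancing $16d_A^{-q_s}$ against $8(2+\sqrt2)d_C^{-q_s}$ forces $d_A/d_C\to1$, with $d_A/d_C-1=O(1/s)$.

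The first step is a coarse localisation. Suppose $Y$ satisfies $E_s(Y)\le E_s(X(a_s))\le 28\,(\min(d_A,d_C))^{-s/2}$. Then $\delta(Y)\ge 28^{-1/s}\delta(X(a_s))$, and the latter is $\delta_8(1-O(1/s))$. We then need a stability version of the Tammes uniqueness result: if $\delta(Y)\ge\delta_8-\eta$, then $Y$ is within $O(\eta^{\alpha})$ of a congruent copy of $X(a_\infty)$. For $N=8$ the optimal contact graph is rigid, so we expect $\alpha=1$, or at least some $\alpha>0$. We would obtain this by a compactness argument combined with the local structure of the Schütte--van der Waerden proof; a purely qualitative compactness statement already gives convergence.

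The second step, which we expect to be the main obstacle, is a uniform local analysis in a neighbourhood of $X(a_\infty)$ whose size is fixed relative to the rescaled problem. Write $Y$ as a perturbation of $X(a_s)$ modulo rotations. The energy is then dominated by the 16 short pairs, with $E_s(Y)\approx\sum_{\text{short}} d_{ij}^{-s/2}$, while the contribution of the long pairs is exponentially smaller, of relative size $(d_{\min}/d_B)^{s/2}$.

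Near the optimal packing we would normalise by the minimal distance. The problem then becomes an asymptotically "soft-min" one: minimise $\sum_{\text{edges}} e^{-(s/2)\log d_{ij}}$. Its linearisation at the contact graph is governed by the stress/infinitesimal-rigidity structure of the Tammes configuration. Concretely, we would show that the Hessian of $s^{-2}\,d_{\min}^{s/2}E_s$, expressed in coordinates transverse to the $SO(3)$ orbit and the antiprism family, is uniformly positive definite for large $s$. Along the one-parameter antiprism family, minimality is the statement that $a_s$ uniquely solves $F(s,a)=0$ (Proposition~\ref{prop:parameter}). Transverse to the family, the key input is that the contact graph of $X(a_\infty)$ has a strictly positive self-stress. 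Positivity of this stress together with infinitesimal rigidity modulo the family gives first-order growth of $\max$ edge-length defects. This dominates the exponential sum once $s$ is large, yielding $E_s(Y)>E_s(X(a_s))$ unless $Y$ is congruent to $X(a_s)$.

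Combining the two steps gives the theorem. Any competitor with energy at most $E_s(X(a_s))$ must lie in the neighbourhood of the second step once $s\ge S$, and there the strict local minimality, together with uniqueness of the critical point along the family, forces congruence.
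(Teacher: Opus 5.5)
Your outline follows the paper's route: Sch\"utte--van der Waerden uniqueness plus compactness, localization near $P=X(a_\infty)$, a rescaled Hessian estimate in which an order-$s^2$ contact term beats an order-$s$ remainder, and stationarity of $X(a_s)$. However, the local step does not close as you set it up.

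First, the localization must have rate $O(1/s)$. ``Some $\alpha>0$'' or qualitative compactness is not enough. The Hessian comparison needs every contact weight $\norm{y_i-y_j}^{-s-4}$ to stay within a bounded factor of $\tau^{-s/2-2}$, $\tau=\delta_8^2$, across the whole region. That rate is exactly what your ``first-order growth of the largest defect'' gives: $\min_e f_e(z)\le\tau-c\norm z$ over contact edges $e$, where $f_e$ is the squared distance written in coordinates $z$ transverse to rotations, combined with $\norm{y_i-y_j}^2\ge\tau\,28^{-2/s}$. But this growth only localizes. At distance $O(1/s)$ all competing energies are comparable up to bounded factors. A Taylor expansion at $X(a_s)$ also has a cubic remainder of the same order as its quadratic term. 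So the defect growth cannot by itself give $E_s(Y)>E_s(X(a_s))$. You need positivity of the Hessian at every point of the ball, i.e.\ convexity; strict local minimality at $X(a_s)$ is not enough.

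Second, splitting off the antiprism family leaves a real hole. A Hessian that is positive only transverse to the orbit and the family, plus minimality along the family, does not control points off the family, where mixed second derivatives enter. For example, $g(x,y)=x^2+y^2-2x^2y$ is strictly convex in $y$, has $g(x,0)$ uniquely minimal at $0$ and Hessian $2I$ there. Yet $g(x,x^2)=x^2-x^4<0$ for $|x|>1$. Repairing this would require three things: fibering the neighbourhood over $a$; using that $\nabla E_s(X(a))$ is parallel to $\partial_aX(a)$ for every $a$ (the symmetry argument of Proposition~\ref{prop:parameter}); and proving strict convexity on each fiber.

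The paper needs none of this. By Lemma~\ref{lem:rigidity}, the contact graph is infinitesimally rigid modulo rotations alone. The family is not a flex, since $d_A$ and $d_C$ move in opposite directions as $a$ varies. So $\sum_{e\text{ contact}}(Df_e(z)[v])^2\ge\lambda\norm v^2$ on the full slice $H_P$. Hence the bound \eqref{eq:tail-hessian} holds on the entire convex ball $\norm z<K/p$, $p=s/2$. The full stationarity of $X(a_s)$ then makes it the unique critical point, hence the unique minimizer, there.
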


\subsection{Monotonic potentials}

Interestingly, however, the global optimality does not extend to all energies arising from completely monotonic potentials of the squared distance. Recall that $v$ is \emph{strictly completely monotonic} if it is smooth
on $(0,\infty)$ and $(-1)^k v^{(k)}(t)>0$ for every integer $k\geq0$ and
$t>0$.
For such a potential $v$ and a configuration $Y$ of eight distinct points on $\Sph$, we write the corresponding $v$-energy as
\begin{equation}\label{eq:cm-energy}
 E_v(Y)=\sum_{1\leq i<j\leq8}v(\norm{y_i-y_j}^2).
\end{equation}

\begin{theorem}\label{thm:CM}
    There exist energies arising from completely monotonic potentials of the squared distance for which no square antiprism is a global minimizer.
\end{theorem}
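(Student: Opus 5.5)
The plan is to exhibit one explicit strictly completely monotonic potential $v$ and one explicit competitor configuration $Y_\ast\in(\Sph)^8$ such that
\[
 E_v(Y_\ast)<\inf_{0<a<1}E_v(X(a)),
\]
and to make this inequality rigorous. Note that $X(a)$ as defined in \eqref{eq:family} already fixes the relative twist $\pi/4$, so every square antiprism is congruent to some $X(a)$. It therefore suffices to beat the one-parameter family $X(a)$.

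\textbf{Choice of potential.} I would work within the Gaussian family $v_\alpha(t)=e^{-\alpha t}$, $\alpha>0$, which is strictly completely monotonic. If needed I would also use positive combinations $\sum_k c_k e^{-\alpha_k t}$, which remain completely monotonic. The heuristic guiding the choice is the small-$\alpha$ expansion
\[
 E_{v_\alpha}(Y)=\sum_{k\ge0}\frac{(-\alpha)^k}{k!}\sum_{i<j}\norm{y_i-y_j}^{2k}.
\]
Writing $\norm{y_i-y_j}^2=2-2\ip{y_i}{y_j}$, the order-$k$ coefficient is governed by the Gegenbauer moments $\sum_{i,j}P_\ell(\ip{y_i}{y_j})$ for $\ell\le k$. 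Configurations with vanishing low moments (spherical designs) are favoured up to some order, and the first order at which the competitors' moments differ decides the comparison.

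The cube is a spherical $3$-design. The antiprism $X(a)$ can kill the degree-$2$ moment only at $a=1/3$, and I would check whether the $D_{4d}$ symmetry kills its degree-$1$ and degree-$3$ moments. I would then compare the degree-$4$ (and if needed degree-$5,6$) moments of the cube, of $X(1/3)$, and of the numerically optimized minimizers of $E_{v_\alpha}$ for several values of $\alpha$. These comparisons are explicit finite computations. If the Gaussians alone keep favouring the antiprism, which is consistent with the numerics of Cohn and Woo, I would search numerically over two- or three-term mixtures $c_1e^{-\alpha_1t}+c_2e^{-\alpha_2t}$. In such a mixture, a large-$\alpha$ component rewards large minimal distance (where the antiprism, as the Tammes configuration, is strong), while a small-$\alpha$ component weights high-degree moment cancellation, where a configuration such as the cube or a twisted/biased variant may win. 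The goal of the search is one concrete $v$ with a clear numerical gap between a non-antiprism local minimizer $Y_\ast$ and $\min_a E_v(X(a))$.

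\textbf{Certification.} Once $v$ and $Y_\ast$ are chosen, I would take $Y_\ast$ with algebraic coordinates, for instance the cube, and evaluate $E_v(Y_\ast)$ with interval arithmetic. The function $a\mapsto E_v(X(a))$ is given explicitly by \eqref{eq:distances}:
\[
 8v(d_A)+4v(d_B)+8v(d_C)+8v(d_D).
\]
Because $v$ is bounded on $[0,4]$, this extends continuously to $[0,1]$. I would lower-bound it on a finite subdivision of $[0,1]$, using monotonicity of each $v(d_\bullet(a))$ in $a$ on every subinterval: $d_A,d_B$ are decreasing and $d_C,d_D$ are increasing, so each term is bounded by its endpoint values. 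This gives a certified bound $\min_a E_v(X(a))>E_v(Y_\ast)$, which proves Theorem~\ref{thm:CM}.

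\textbf{Main obstacle.} The hard step is the first one: finding a completely monotonic potential for which a non-antiprism configuration genuinely wins, since Cohn and Woo's numerics suggest the antiprism wins for the natural one-parameter families. I would first try the moment analysis at small $\alpha$, comparing the cube against $X(1/3)$ at the first non-vanishing Gegenbauer order. Failing that, I would do a numerical optimization over the mixture weights. Once a gap is found, the certification is routine.
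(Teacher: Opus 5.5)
Your proposal is a search plan, not a proof. The theorem asserts exactly that some completely monotonic $v$ and some configuration beating every $X(a)$ exist, and you neither exhibit them nor give a reason the numerical search over Gaussian mixtures must succeed. The step ``once a gap is found'' assumes the conclusion.

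The one concrete lead you give also fails. For $X(a)$, the only $D_{4d}$-invariant harmonic of degree $4$ is the zonal one, so $\sum_{i,j}P_4(y_i\cdot y_j)=\bigl(\sum_iP_4(z_i)\bigr)^2$. At $a=1/3$ this equals $784/81\approx9.68$, while for the cube it is $448/27\approx16.6$. Both are $3$-designs, so at the first order where your small-$\alpha$ expansion separates them, the antiprism wins.

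Far-away competitors also look wrong in the regime where the paper's counterexample lives, namely a very steep component concentrated at the shortest antiprism distance $d_*=13/9$. There the cube, whose squared edge $4/3$ is below $d_*$, is hopeless. The energy gain the paper actually obtains is only of order $n^{-3}u^2$, for a small perturbation of an antiprism. So one should not expect a ``clear numerical gap'' against a distant configuration.

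The missing idea is to make the best antiprism a saddle point of the full energy, rather than hunting for a global competitor.
\begin{enumerate}
\item The paper takes $f=g+\lambda h$ with slowly varying $g(t)=1/(n+t)$ and steep $h(t)=(d_*/t)^n$, $n=10^4$. It chooses $\lambda>0$ so that $G_f'(a_*)=0$ at $a_*=5/18$.
\item Since $G_v''$ is a positive combination of the values $v''(d_\bullet)$, $G_f$ is strictly convex whenever $f''>0$. Hence $X(a_*)$ is automatically the best antiprism, with no subdivision or interval bounds in $a$.
\item One then computes the second variation along an explicit deformation $Y_{a_*}(u)$ that bends the upper square out of its plane. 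This gives $4Q(f)$ with $Q(f)=S(g)-L(g)S(h)/L(h)$, where $S=Q+L$.
\item For $h$ only the $A$-edges matter, and their contributions to $S$ cancel, so $|S(h)/L(h)|<1/10$. Meanwhile $S(g)\approx n^{-3}S(t^2)=-\tfrac{416}{81}n^{-3}$ and $|L(g)|<22n^{-3}$.
\item Hence $Q(f)<0$, and a small deformation of $X(a_*)$ has lower energy than every antiprism.
\end{enumerate}
Your instinct to combine a very slow and a very fast exponential is sound: the paper notes that $e^{-t/n}+\lambda e^{-n(t-d_*)}$ also works. But it works through this local instability at the family minimizer, which your plan does not contain.
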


The latter theorem therefore answers in the negative the universality conjecture of Cohn and Woo \cite[Conjecture 14]{CW}.

\subsection{Computer-assisted proofs, Lean verifications, and global optimality for all $s \geq 0$}

Theorems~\ref{thm:global}, \ref{thm:large-s} and \ref{thm:CM} have been fully verified in Lean. The proof of Theorem~\ref{thm:global} is computer-assisted, and the Lean verification of the computer-assisted part of the proof is one of the main novelties of the paper. The proof of Theorem~\ref{thm:large-s} that we have chosen to present uses a compactness argument rather than computer assistance. This, of course, means that it does not quantify the value of $S$ precisely. In the associated GitHub repository, we include a more delicate argument with computer assistance that proves global optimality of the square antiprism for all $s\geq 0$. We emphasize that we have not included the full range $s\geq 0$ in the main article, as Lean verification is one of our standards for publication. In particular, executing this verification is itself novel, as we discuss below.

\subsection{Usage of Large Language Models} The collaboration between the three authors began at the Shanks Workshop 2026 at Vanderbilt University and we thank the organizers for making this possible. Since then, the authors have discussed various problems on energy optimization and how to Lean verify such proofs. The main ideas in the proofs of Theorems~\ref{thm:global}, \ref{thm:large-s} and \ref{thm:CM}  are due to GPT-6 Astra. The primary contributions of the authors were to digest the proof, simplify the numerics as much as possible, to Lean verify the numerical certificates,\footnote{Lean verifying the non-computer-assisted proofs was not difficult, in view of our previous experiences.} and to extend the computer-assisted proof to cover all $s\geq 0$, as discussed in Remark~\ref{rem:all s}. We emphasize that this has not been trivial. Indeed, the original computer-assisted proof for general Riesz energies was found after dozens of follow-up questions, required over $90$ pages of mathematical reductions, and needed a massive number of numerical certificates. Lean verifying the computer-assisted part of such a proof would have likely been impossible without specialized equipment. Instead, we were able to find a way to reduce the computer assistance by over $99$ percent, and, in view of our success in verifying the cases $s=0,1$, (which also required us to overcome some interesting engineering-type issues), we expect that it will now be entirely possible -- albeit somewhat time-consuming -- to complete the Lean verification for general $s\geq 0$.

\subsection{Outline of the paper}
Section~\ref{sec:proof-guide} sets the notation and discusses the general strategy of the proof. Section~\ref{sec:geometry} determines the stationary height and the
minimum energy values. Sections~\ref{sec:minorants}
and~\ref{sec:equality} establish the energy bound and the two uniqueness
criteria used by the certificates. Section~\ref{sec:cap-fixed} describes
their verification and completes the proof of
Theorem~\ref{thm:global}. Section~\ref{sec:riesz} proves
Theorem~\ref{thm:large-s}.
Section~\ref{sec:Count} gives numerous counterexamples to the universality of the square antiprism. Finally, Section~\ref{sec:lean-formalization} discusses the Lean formalization.

\section{Notation and proof strategy}\label{sec:proof-guide}
We write $x\cdot y$ for the Euclidean inner product. Distances on $\Sph$
are chordal distances \mbox{$\norm{x-y}$}, not geodesic distances. Congruence means equality up to a common orthogonal transformation and a permutation of the vertices; reflections are allowed. In the family $X(a)$ defined in \eqref{eq:family}, the parameter $a$ is the squared half-height. The quantities $d_A,d_B,d_C,d_D$ in \eqref{eq:distances} denote the four squared distances occurring in $X(a)$, while
$A,B,C,D$ in \eqref{eq:nodes} are the corresponding inner products. 

The notation $E_s(Y)$ denotes the energy of a configuration, while $\mathcal E_s(N)$ denotes the minimum over $N$-point configurations.
The auxiliary normalized energy $\Eh_s$ and its potential $\phi_s$ are introduced in Section \ref{sec:geometry}.
We write $e_s=\Eh_s(X(a_s))$ for the normalized energy of the candidate and $E_H(Y)=\sum_{i<j}H(y_i\cdot y_j)$ for the energy associated with a polynomial $H$.

For a real symmetric matrix $M$, we write $M\succeq0$ and $M\succ0$ for positive semidefiniteness and positive definiteness, respectively. For real symmetric matrices of the same order,
$\langle F,R\rangle=\tr(FR)$ denotes the trace inner product. We use $\norm{M}_\infty$ to denote the maximum absolute row-sum norm and write $I$ and $J$ for the identity matrix and the matrix with all entries equal to 1, respectively. In this article, $\delta_{k0}$ denotes the Kronecker delta, equal to one when $k=0$ and zero otherwise, and
for a vector $v$, we use $\norm v_\infty=\max_i|v_i|$.

The operator $\Sym$ denotes the average over the six permutations of $(u,v,t)$. For an eight-point configuration $Y$, the operator $\Psi_Y$
denotes summation over ordered triples of distinct vertices. By contrast, the sums defining the moment matrices $\mathcal M_k(Y)$ run over all ordered triples of vertices, allowing repetitions. These operators and matrices are described in Section \ref{sec:minorants}.

We now discuss the proof strategy for Theorems \ref{thm:global} and \ref{thm:large-s}. We begin by identifying the optimal member of the square antiprism family and then show that it is globally minimizing among all eight-point configurations. Steps 1-5 establish this conclusion for $s=0,1$, where we identify the candidate, prove a sharp lower bound for the energy of every configuration, and show that equality occurs only for configurations congruent to the candidate. Steps 6-7 give a separate argument for sufficiently large $s$, using the same candidate but replacing polynomial bounds by packing geometry and convexity.

\subsection*{Step 1. Selecting the optimized antiprism}

We begin with the square antiprism family $X(a)$ defined in
\eqref{eq:family}. Its four squared distances are affine functions of $a$, so minimizing the energy within this family is a one-variable problem. Differentiation with respect to $a$ gives the stationarity equation $F(s,a_s)=0$. For fixed $s$, the function $a\mapsto F(s,a)$ is strictly increasing and changes sign at its unique zero, which we denote by $a_s$. Proposition \ref{prop:parameter} therefore determines a unique candidate $X(a_s)$.

We next show that $X(a_s)$ is stationary with respect to arbitrary tangential perturbations, not only variations within the antiprism family. Reflection symmetry restricts each tangential gradient to the meridian direction, while the remaining symmetries and the vanishing of the family derivative force these gradients to vanish. Proposition \ref{prop:parameter} gives the details. It follows that $X(a_s)$ uniquely minimizes the energy within the square antiprism family and is a critical point of the energy on the full configuration space, but neither fact alone establishes global optimality.

\subsection*{Step 2. Reducing to a polynomial energy}

For this purpose, we use the auxiliary normalized energy $\Eh_s$ of Section \ref{sec:geometry}. For $s>0$, it is obtained from $E_s$ by subtracting $28$ and dividing by $s$. This change preserves minimizing  configurations and equality cases, while giving a smooth extension to the logarithmic energy at $s=0$.
Writing its potential as $\phi_s(t)$, where $t$ is an inner product, we seek a polynomial $H_s$ that lies below $\phi_s$ on $-1\le t<1$ and agrees with it at the four inner products $A,B,C,D$ of $X(a_s)$.

We construct $H_s$ by Hermite interpolation. Here, we make use of arguments attributed to Yudin in Cohn and Woo \cite[Lemma~9]{CW}; see also Cohn and Kumar \cite[Lemmas~2.1 and~5.1]{CohK07}. We use a single interpolation scheme, where $H_s$ has degree at most twelve and satisfies
$$
H_s^{(j)}(-1)=\phi_s^{(j)}(-1), \quad 0\le j\le4
$$
$$
H_s(\alpha)=\phi_s(\alpha),\quad
 H_s'(\alpha)=\phi_s'(\alpha),\quad \alpha\in\{A,B,C,D\}
$$
These thirteen conditions determine $H_s$ uniquely. Matching the values makes the polynomial energy equal to $\widehat{E}_s$ at the candidate, while matching the first derivatives ensures that the derivative of the polynomial energy along the antiprism family also vanishes at $a_s$, as needed in Step 4. Lemma \ref{lem:minorant} proves that the polynomial lies below the potential everywhere, with strict inequality away from the interpolation nodes. Indeed, away from those nodes the remainder is
\[
 \phi_s(t)-H_s(t)
 =\frac{\phi_s^{(13)}(\xi)}{13!}
 (t+1)^5(t-A)^2(t-B)^2(t-C)^2(t-D)^2>0,
\]
where $-1<t<1$ and $\phi_s^{(13)}(\xi)>0$. Thus, the equality set of the minorant is exactly $\{-1,A,B,C,D\}$.
The first inequality in
\[
 \Eh_s(Y)\ge E_{H_s}(Y)\ge E_{H_s}(X(a_s))
             =\Eh_s(X(a_s)),
 \qquad
 E_{H_s}(Y)=\sum_{i<j}H_s(y_i\cdot y_j)
\]
follows from Lemma~\ref{lem:minorant}, while the last equality follows from the interpolation conditions. The remaining task is to establish the middle inequality, where we must show that no eight-point configuration has smaller polynomial energy than $X(a_s)$. This is where the three-point method enters the argument.

\subsection*{Step 3. Applying the three-point energy bound}
We follow Cohn and Woo's three-point semidefinite framework for the spherical energy
\cite[Section~3, Theorem~7]{CW}. Unlike a two-point bound, which uses only
pairwise information, the three-point method also takes into account the
relations among the three pairwise inner products of a triple of points.
These inner products cannot vary independently. Indeed, they must arise from a
positive-semidefinite Gram matrix. Moreover, Cohn and Woo introduce
matrix-valued polynomial kernels whose sums over all triples of a
spherical configuration form positive-semidefinite moment matrices.
These additional constraints make it possible to obtain a sharper
lower bound for the polynomial energy.

For the polynomial $H_s$, the desired bound will follow from the
three-point identity (\ref{eq:certificate}). The left-hand side is the
average of $H_s$ over the three edges of a triangle, minus the average
polynomial energy of the candidate, while the right-hand side is a sum
of scalar sum-of-squares terms and matrix terms. The scalar terms are
nonnegative for each triangle. The matrix terms need not be nonnegative
for an individual triangle, but their total contribution after summing
over the configuration is nonnegative by the positive-semidefinite
moment conditions. Section \ref{sec:minorants} establishes these
inequalities after summing over all triples of the configuration and
accounts for triples with repeated vertices.

Once the identity and its matrix positivity conditions are established,
summing over ordered triples of distinct vertices gives
\[
 12\bigl(E_{H_s}(Y)-\Eh_s(X(a_s))\bigr)\ge0.
\]
Together with Step 2, this proves the desired energy inequality for every
configuration $Y$, without any assumption of symmetry. The next step
proves the identity (\ref{eq:certificate}) and verifies the required
positivity conditions.

\subsection*{Step 4. Verifying the two exact bounds}
We verify the identity (\ref{eq:certificate}) and the required positivity
conditions separately for $s=0$ and $s=1$. The exact constructions are
given in Section \ref{sec:cap-fixed}.

In the logarithmic case, the explicit formula for $a_0$ allows us to describe every coefficient
in the polynomial identity exactly. As explained in
Section~\ref{sec:cap-fixed}, after subtracting the two sides, the
coefficient of each monomial can be written in the form
$$
c_0+c_1L_2+c_2L_A+c_3L_C+c_4L_D,
$$
where the $c_j$ are algebraic numbers and $L_2,L_A,L_C,L_D$ are the four
logarithmic quantities defined there.
We then verify the identity one monomial at a time. For each monomial,
we prove that the algebraic term and the coefficients multiplying each of
the four logarithms are all zero. Hence, every monomial coefficient in the
difference of the two sides vanishes, so the polynomial identity holds
exactly. In particular, no numerical approximation of the logarithms is used in verifying the polynomial identity, and no linear independence of the four logarithms is required.

In the Coulomb case, we start with a rational vector $w^0$ chosen in
advance. It serves only as an approximate starting point and is then
corrected so that (\ref{eq:coulomb-certificate}) holds exactly.
The identity is an equality between symmetric polynomials of degree at
most twelve and therefore gives $102$ coefficient equations. However,
these $102$ equations are not independent. There are two linear
dependencies among them, arising from summation over the triples of the
candidate $X(a_1)$ and differentiation of this sum along the antiprism
family at $a_1$, with the polynomials held fixed. Both the target
polynomial and the polynomial terms on the right-hand side satisfy these
two relations.
In addition, one coefficient equation is identically zero.
Thus, only $99$ independent coefficient conditions remain.
The correction described in Section \ref{sec:cap-fixed} adjusts $w^0$
so that $99$ selected coefficient equations hold exactly. After verifying
that these $99$ equations are independent, the two linear relations above
force the remaining two nontrivial equations to hold as well.
Thus, (\ref{eq:coulomb-certificate}) holds exactly.

Finally, Section~\ref{sec:cap-fixed} verifies the required matrix positivity using rigorous interval bounds. In the logarithmic case, these bounds are applied directly to the exact coefficient matrices. In the Coulomb case, the coefficients are first corrected to make the polynomial identity exact, so positivity must also be shown to survive this correction. For this reason, the required positivity is verified throughout a small neighborhood of the starting vector that contains the exact corrected coefficients. The resulting estimates establish the positive-semidefiniteness conditions needed in Step 3, together with the stronger positive-definiteness conditions used in Step 5. Proposition~\ref{prop:certificate} then gives the energy inequality for both $s=0$ and $s=1$. The equality case and uniqueness are treated next.

\subsection*{Step 5. Recovering the antiprism from equality}

We now determine which configurations can attain the minimum. Suppose that
an eight-point configuration $Y$ satisfies
$\widehat E_s(Y)=\widehat E_s(X(a_s))$. Then Steps 2 and 3 give
\[
\widehat E_s(Y)\ge E_{H_s}(Y)\ge
E_{H_s}(X(a_s))=\widehat E_s(X(a_s)).
\]
Hence, equality must hold in both inequalities. We will carry out an analysis of the equality case similar to those used to establish uniqueness in Cohn and Kumar
\cite[Section~6]{CohK07} and Cohn and Woo \cite[Section~4]{CW}.

Equality in the first inequality comes from the polynomial minorant
constructed in Step 2. Since the minorant is strictly smaller than the
original potential away from its interpolation nodes, every pairwise
inner product in $Y$ must be one of the four inner products $A, B, C, D$
occurring in $X(a_s)$, or possibly the additional interpolation node $-1$,
corresponding to an antipodal pair. An antipodal pair would have opposite
inner products with any third vertex. The verified node conditions exclude
such opposite values, including zero, so antipodal pairs are impossible.
Thus, every inner product between distinct vertices belongs to
$\{A,B,C,D\}$.

\vspace{0.2cm}
\textbf{The logarithmic case.}
We next use equality in the three-point bound from Step 3. In particular, the $k=0$ moment-matrix condition implies that, at every vertex $y\in Y$, the sums
\[
\sum_{z\in Y}(y\cdot z)^i, \qquad i=0,1,2,3,
\]
agree with the corresponding sums for the candidate. Since the four possible off-diagonal inner products $A, B, C, D$ are distinct, these power sums determine how many neighbors of each type a vertex has. One obtains exactly two neighbors of type $A$, one of type $B$, two of type $C$, and two of type $D$ at every vertex. The first moment also gives that the centroid of $Y$ is zero.

The sum-of-squares part of the three-point identity gives one more combinatorial restriction. If a triangle has two edges of type $A$, then its third edge must be of type $B$. Now join two vertices whenever their inner product is $A$. Since every vertex has exactly two $A$-neighbors, this graph is a union of cycles. The preceding triangle restriction rules out three-cycles, while the fact that every vertex has only one $B$-neighbor rules out cycles of length at least five. Since there are eight vertices, the graph must therefore consist of two four-cycles.

The inner products within each four-cycle determine its Gram matrix and show that the four corresponding points form a regular square. Since the centroid of the full configuration is zero, the centers of the two squares are opposite. Finally, the remaining inner products $C$ and $D$ determine the relative position of the two squares and force their relative rotation to be $\pi / 4$. The resulting configuration is exactly the square antiprism $X(a_0)$, up to congruence.
Proposition~\ref{prop:moment-unique} gives the details.

\vspace{0.2cm}
\textbf{The Coulomb case.}
Here we use a different consequence of equality. As above, every inner
product between distinct vertices belongs to $\{A,B,C,D\}$. We choose
any three vertices of $Y$. The Gram-matrix conditions verified in
Statement~\ref{cap:coulomb} imply that every Gram matrix formed from three
of these allowed inner products is nonsingular. Hence, the three chosen
vertices are linearly independent and form a basis of $\R^3$.

The position of any further vertex is then completely determined by its
three inner products with these basis vectors. Since each of these inner
products must belong to $\{A,B,C,D\}$, there are only finitely many
possibilities. The finite Gram-matrix verification determines exactly
which of these possibilities can correspond to a unit vector. For each
possible choice of the first three vertices that can occur in an
eight-point configuration, it identifies a corresponding triple of
vertices of $X(a_1)$ and shows that every admissible additional vertex
is one of the remaining vertices of that same antiprism.

We now map the first three vertices of $Y$ to the corresponding triple
of $X(a_1)$. Since the two triples have the same Gram matrix, this map is
orthogonal. Each of the other five vertices is uniquely determined by
its inner products with the first three, and the finite verification
shows that these inner products are realized by a vertex of $X(a_1)$.
Thus, the same orthogonal transformation sends all eight vertices of $Y$
to vertices of $X(a_1)$. Since the vertices are distinct, their images
must be exactly the eight vertices of the antiprism. Hence $Y$ is
congruent to $X(a_1)$. Proposition~\ref{prop:gram-unique} carries out
this reconstruction in detail.
Together, the two cases complete the equality
assertion in Theorem~\ref{thm:global}.

\subsection*{Step 6. Localizing all possible minimizers for large \texorpdfstring{$s$}{s}}

For large $s$, we use packing geometry rather than the polynomial minorant. Let
\[
a_\infty=\frac{2\sqrt2-1}{7},
\qquad
P=X(a_\infty).
\]
Thus, $P$ is the member of the square antiprism family that gives the optimal eight-point packing. By the packing theorem recalled in Section~\ref{sec:tail}, it is the unique configuration, up to congruence, maximizing the minimum distance between its points; see \eqref{eq:packing}. Write
\[
\tau=2(1-a_\infty)
\]
for its minimum squared distance.

Since $P$ has 28 pairs and every squared distance in $P$ is at least $\tau$, we have
\[
E_s(P)\le 28\tau^{-s/2}.
\]
Hence, if $E_s(Y)\le E_s(P)$, then for every pair $i\ne j$,
\[
\|y_i-y_j\|^{-s}\le E_s(Y)\le 28\tau^{-s/2},
\]
and therefore
\[
\|y_i-y_j\|^2\ge \tau\,28^{-2/s}.
\]
As $s \to \infty$, the right-hand side tends to $\tau$. By compactness and the uniqueness of the optimal packing, every such configuration must therefore lie close to a congruent copy of $P$. This applies to every global minimizer and also to $X(a_s)$, since $E_s(X(a_s))\le E_s(P)$.

The arguments in Section \ref{sec:riesz} strengthen this localization by analyzing the first-order changes of the shortest distances near $P$. Apart from motions that simply rotate the entire configuration, every nonzero infinitesimal deformation of $P$ decreases at least one of its shortest distances. Combining this fact with the bound
\[
\|y_i-y_j\|^2\ge \tau\,28^{-2/s}
\]
shows that, after applying a suitable congruence, every possible minimizer, as well as $X(a_s)$, lies within distance $O(1 / s)$ of $P$. The next step is to prove strict convexity of the energy on this shrinking neighborhood.

\subsection*{Step 7. Proving strict convexity near the optimal packing}

Step 6 shows that, for sufficiently large $s$, every possible global minimizer and the candidate $X(a_s)$ lie in an $O(1/s)$-neighborhood of the optimal packing $P=X(a_\infty)$. Throughout this neighborhood, all pairwise squared distances differ from their values at $P$ by $O(1 / s)$.

We then estimate the Hessian of the energy on this entire neighborhood. Its main positive term grows quadratically in $s$, while the remaining terms grow only linearly in $s$. The first-order analysis of the shortest distances near $P$ gives a uniform lower bound for the positive term. Consequently, for all sufficiently large $s$, the Hessian is positive definite throughout the neighborhood after fixing the three rotational degrees of freedom, and the energy is strictly convex there. 

After congruence, every possible global minimizer lies in this neighborhood by Step 6. The candidate $X(a_s)$ lies there as well, and Step 1 shows that the first variation of the energy vanishes at $X(a_s)$. Since the energy is strictly convex on this neighborhood and its first variation vanishes at $X(a_s)$, this point is its unique minimizer. Thus, every global minimizer is congruent to $X(a_s)$. This proves global optimality for all sufficiently large $s$.

A similar two-stage strategy of first localizing all possible minimizers and then proving convexity on the resulting neighborhood appears in Schwartz \cite[Section~1.3]{Sch13}. Here, the localization comes from the geometry of the optimal eight-point packing and the behavior of its shortest distances under small perturbations.

The above compactness argument gives the existence of a threshold $S$, but this proof does not specify a numerical value of $S$. In the associated GitHub repository, we provide a more delicate argument that quantifies $S$ and also proves global optimality for the Riesz $s$-energies for $0 \leq s\leq S$, completing the argument for all $s\geq 0.$

\section{The stationary antiprism}\label{sec:geometry}
We determine the candidate by minimizing the energy within the
one-parameter family $X(a)$. The same calculation applies to every
$s\ge0$ so we record the following general fact.

\begin{proposition}[Stationary parameter]\label{prop:parameter}
For every $s\ge0$, the equation $F(s,a)=0$ has exactly one solution
$a_s\in(0,1)$. The configuration $X(a_s)$ uniquely minimizes $E_s$
within the family \eqref{eq:family} and is a critical point of $E_s$ on
$(\Sph)^8$. In the logarithmic case, we have
\[
 a_0=\frac{2\sqrt{58}-13}{7}.
\]
\end{proposition}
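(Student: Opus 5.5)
The plan is to reduce everything to the one-variable function $g_s(a)=E_s(X(a))$ on $(0,1)$. Write $f_s(d)=d^{-s/2}$ for $s>0$ and $f_0(d)=-\tfrac12\log d$, so that $K_s(x,y)=f_s(\norm{x-y}^2)$. By \eqref{eq:distances},
\[
g_s(a)=8f_s(d_A)+4f_s(d_B)+8f_s(d_C)+8f_s(d_D).
\]
In both cases $f_s'(d)=-c_s\,d^{-q_s}$ with $c_s=s/2$ for $s>0$ and $c_0=1/2$, and $c_s>0$. The derivatives of the squared distances are $d_A'=-2$, $d_B'=-4$, $d_C'=2+\sqrt2$ and $d_D'=2-\sqrt2$. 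The chain rule then gives $g_s'(a)=c_s\,F(s,a)$, so $g_s'$ and $F(s,\cdot)$ have the same sign and the same zeros.

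Next I would show that $F(s,\cdot)$ has exactly one zero in $(0,1)$.
\begin{itemize}
\item $F(s,\cdot)$ is strictly increasing. The quantities $d_A$ and $d_B$ decrease in $a$, so $d_A^{-q_s}$ and $d_B^{-q_s}$ increase. The quantities $d_C$ and $d_D$ increase in $a$, so the terms $-d_C^{-q_s}$ and $-d_D^{-q_s}$ also increase.
\item $F(s,a)\to+\infty$ as $a\to1^-$, because $d_A,d_B\to0$.
\item $F(s,0^+)<0$. At $a=0$ the positive part is $16\cdot2^{-q_s}+16\cdot4^{-q_s}\le 12$, since $q_s\ge1$. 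For the negative part, $2-\sqrt2<1$ and $q_s\ge1$ give $(2-\sqrt2)^{-q_s}\ge(2-\sqrt2)^{-1}=(2+\sqrt2)/2$. Hence $8(2+\sqrt2)(2-\sqrt2)^{-q_s}\ge4(2+\sqrt2)^2>46$.
\end{itemize}
This yields a unique $a_s$ with $g_s$ strictly decreasing on $(0,a_s)$ and strictly increasing on $(a_s,1)$. So $X(a_s)$ is the unique minimizer within the family.

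For $s=0$, I would clear denominators in $F(0,a)=0$. The $A$ and $B$ terms combine to $12/(1-a)$. Adding the $C$ and $D$ terms over the common denominator $d_Cd_D$ and cross-multiplying gives a quadratic in $a$. I expect this quadratic to be proportional to $7a^2+26a-9$, whose root in $(0,1)$ is $(2\sqrt{58}-13)/7$. This is a routine check, but the exact constant should be verified carefully.

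The main step is criticality on all of $(\Sph)^8$. Let $G$ be the symmetry group of $X(a)$, which is $D_{4d}$ of order $16$ and independent of $a$. It acts transitively on the vertices, and every $\sigma\in G$ maps the family to itself, so $\sigma$ carries the velocity $y_i'(a)$ to $y_{\pi(i)}'(a)$. By $G$-equivariance of $E_s$, the tangential gradient $\nabla_i$ at $y_i$ satisfies $\sigma\nabla_i=\nabla_{\pi(i)}$.
\begin{itemize}
\item The reflection in the vertical plane through $y_1$ fixes $y_1$. It therefore forces $\nabla_1$ to lie in the meridian direction, which is exactly the direction of $y_1'(a)$. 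So $\nabla_1=\lambda y_1'(a)$ for some scalar $\lambda$.
\item Transporting by $G$ gives $\nabla_i=\lambda y_i'(a)$ for all $i$ with the same $\lambda$.
\item Then $0=g_s'(a_s)=\sum_i\nabla_i\cdot y_i'(a_s)=8\lambda\norm{y_1'(a_s)}^2$, so $\lambda=0$.
\end{itemize}
The delicate point is checking that the rotoreflections exchanging the two squares map the family velocity $y_i'$ to $+y_{\pi(i)}'$ rather than its negative. This holds because they map each $X(a)$ to itself for all $a$.
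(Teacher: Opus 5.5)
Your proposal is correct and follows the paper's proof essentially step for step. It uses the chain-rule identity $g_s'=c_sF(s,\cdot)$, monotonicity of $F(s,\cdot)$ with the sign check at $a=0$ and divergence as $a\uparrow1$, and the same symmetry argument: the $xz$-reflection confines each tangential gradient to the meridian direction, and the transitive maps (rotation by $\pi/2$ and the rotoreflection exchanging the squares, both preserving every $X(a)$) reduce stationarity to a single scalar that the vanishing family derivative kills. The $s=0$ computation you deferred does come out as you expected, namely $\tfrac12F(0,a)=2(7a^2+26a-9)/\bigl((1-a)(a^2+6a+1)\bigr)$, which gives $a_0=(2\sqrt{58}-13)/7$.
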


We use the normalization
\[
 \Eh_s(Y)=
 \begin{cases}(E_s(Y)-28)/s,&s>0,\\ E_0(Y),&s=0,
 \end{cases}
\]
which preserves minimizers, critical points, and equality cases. For
distinct points, it can be written as
\begin{equation}\label{eq:phi}
 \Eh_s(Y)=\sum_{i<j}\phi_s(y_i\cdot y_j),\qquad
 \phi_s(t)=
 \begin{cases}((2-2t)^{-s/2}-1)/s,&s>0,\\
 -\tfrac12\log(2-2t),&s=0.
 \end{cases}
\end{equation}

\begin{proof}[Proof of Proposition~\ref{prop:parameter}]
The information in the Table \eqref{eq:distances} gives
\begin{equation}\label{eq:family-energy}
 E_s(X(a))=8d_A^{-s/2}+4d_B^{-s/2}+8d_C^{-s/2}+8d_D^{-s/2}
 \quad(s>0).
\end{equation}
Consequently, $\partial_a\Eh_s(X(a))=F(s,a)/2$; direct differentiation of the logarithmic energy gives the same identity for \(s=0\).
Differentiating once more gives
\begin{equation}\label{eq:Fa}
\begin{split}
 \partial_aF(s,a)=q_s\bigl(&32d_A^{-q_s-1}+64d_B^{-q_s-1}\\
 &+8(2+\sqrt2)^2d_C^{-q_s-1}
   +8(2-\sqrt2)^2d_D^{-q_s-1}\bigr)>0.
\end{split}
\end{equation}
At $a=0$, the positive terms of $F$ sum to at most $12$, while the
absolute value of its $d_C$ term exceeds $8(2+\sqrt2)>12$.
As $a\uparrow1$, the positive terms diverge and the negative terms remain
bounded. Hence, $F(s,\cdot)$ has a unique zero, and its sign on either
side proves unique minimization within the family, including at $s=0$.
At $s=0$, we have
\[
 \frac12F(0,a)=\frac{2(7a^2+26a-9)}{(1-a)(a^2+6a+1)},
\]
which gives the stated value of $a_0$.

It remains to check stationarity on the full configuration space.
At the upper vertex with angle zero, reflection in the $xz$-plane
preserves $X(a)$ and fixes the vertex. The tangential energy gradient
therefore lies in the meridian direction, which is spanned by the
nonzero vector $\partial_a x_i(a)$. Rotation through $\pi/2$ and the map
consisting of rotation through $\pi/4$ followed by reflection in the
$xy$-plane act transitively on the vertices. These maps preserve the
whole family and carry its derivative vectors to one another.
Thus, the eight numbers
\[
 \ip{\nabla_i E_s(X(a))}{\partial_a x_i(a)}
\]
are equal. Their sum vanishes at $a=a_s$, so each is zero. Reflection
symmetry then makes every tangential gradient zero. This proves full
stationarity for all $s\ge0$.
\end{proof}

We also have explicit formulas for the two minimum values.

\begin{corollary}\label{cor:values}
The minimum logarithmic and Coulomb energies are
\begin{align}
 \mathcal E_0(8)
 &=-12\log2-6\log(1-a_0)-4\log(1+6a_0+a_0^2),\label{eq:logvalue}\\
 \mathcal E_1(8)
 &=\frac{4\sqrt2+2}{\sqrt{1-a_1}}
   +\frac8{\sqrt{d_C(a_1)}}+\frac8{\sqrt{d_D(a_1)}}.\label{eq:coulombvalue}
\end{align}
The configurations attaining these values are $X(a_0)$ and $X(a_1)$,
respectively, up to congruence.
\end{corollary}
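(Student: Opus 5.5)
The plan is to combine Theorem~\ref{thm:global} with the explicit energy of the family $X(a)$ recorded in \eqref{eq:family-energy}. By Theorem~\ref{thm:global}, for $s\in\{0,1\}$ the minimum $\mathcal E_s(8)$ equals $E_s(X(a_s))$, and it is attained exactly by the configurations congruent to $X(a_s)$. This settles the final sentence of the corollary at once. What remains is to evaluate $E_s(X(a_s))$ in closed form from the distance table \eqref{eq:distances}.

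\textbf{Coulomb case.} Set $s=1$ in \eqref{eq:family-energy}:
\[
E_1(X(a_1))=8d_A^{-1/2}+4d_B^{-1/2}+8d_C^{-1/2}+8d_D^{-1/2}.
\]
Substituting $d_A=2(1-a_1)$ and $d_B=4(1-a_1)$ turns the first two terms into
\[
\frac{8}{\sqrt2\sqrt{1-a_1}}+\frac{4}{2\sqrt{1-a_1}}=\frac{4\sqrt2+2}{\sqrt{1-a_1}}.
\]
The remaining two terms are already in the form stated in \eqref{eq:coulombvalue}.

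\textbf{Logarithmic case.} Use $E_0=-\tfrac12\sum\log d$ with the multiplicities from \eqref{eq:distances}:
\[
E_0(X(a))=-4\log d_A-2\log d_B-4\log d_C-4\log d_D .
\]
The first two terms contribute $-4\log2-4\log(1-a)-2\log4-2\log(1-a)=-8\log2-6\log(1-a)$.

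For the last two terms, combine $d_Cd_D$. Multiplying out $(2-\sqrt2+(2+\sqrt2)a)(2+\sqrt2+(2-\sqrt2)a)$ gives the constant term $4-2=2$ and the $a^2$ term $2a^2$. The coefficient of $a$ is $(2-\sqrt2)^2+(2+\sqrt2)^2=12$. Hence
\[
d_Cd_D=2(1+6a+a^2),
\]
and $-4\log(d_Cd_D)=-4\log2-4\log(1+6a+a^2)$. Adding the pieces gives $-12\log2-6\log(1-a)-4\log(1+6a+a^2)$, which is \eqref{eq:logvalue} at $a=a_0$. The value $a_0=(2\sqrt{58}-13)/7$ comes from Proposition~\ref{prop:parameter}.

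\textbf{Main obstacle.} None of these steps is substantial once Theorem~\ref{thm:global} is available. The only point needing care is the product $d_Cd_D$, where the $\sqrt2$ cross terms must cancel. It is worth checking this against the factor $a^2+6a+1$ in the formula for $F(0,a)$ in Proposition~\ref{prop:parameter}, which serves as a consistency check.
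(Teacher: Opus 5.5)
Your proposal is correct and follows the paper's own proof: Theorem~\ref{thm:global} supplies minimality and the congruence characterization. The closed forms come from the pair counts in \eqref{eq:distances}, using the identity $d_Cd_D=2(1+6a+a^2)$ in the logarithmic case and \eqref{eq:family-energy} at $s=1$ in the Coulomb case.
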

\begin{proof}
Since $d_Cd_D=2(1+6a+a^2)$, the pair counts give
\[
 E_0(X(a))=-12\log2-6\log(1-a)-4\log(1+6a+a^2).
\]
For $s=1$, \eqref{eq:family-energy} gives the second formula.
Theorem~\ref{thm:global} identifies these values as the global minima
and gives the equality cases.
\end{proof}

\section{The energy bound}\label{sec:minorants}

We reduce the energy inequality to a finite polynomial identity.
Throughout this section, $s\ge0$ is fixed.

\subsection{A polynomial minorant}
The inner products corresponding to \eqref{eq:distances} are
\begin{equation}\label{eq:nodes}
 A=a,\quad B=2a-1,\quad C=-a+(1-a)/\sqrt2,\quad D=-a-(1-a)/\sqrt2.
\end{equation}
We set $a=a_s$ and assume that these four nodes are distinct.
Let $H_s$ be the polynomial of degree at most $12$ satisfying
\begin{equation}\label{eq:hermite}
\begin{aligned}
 H_s^{(j)}(-1)&=\phi_s^{(j)}(-1),&&0\le j\le4,\\
 H_s(\alpha)&=\phi_s(\alpha),\qquad
 H_s'(\alpha)=\phi_s'(\alpha),&&\alpha\in\{A,B,C,D\}.
\end{aligned}
\end{equation}
These thirteen conditions determine $H_s$ uniquely. Indeed, a
polynomial in the kernel of the interpolation map would have thirteen
zeros counted with multiplicity, while its degree is at most twelve.

\begin{lemma}\label{lem:minorant}
The polynomial $H_s$ satisfies
\[
 H_s(t)\le\phi_s(t)\quad(-1\le t<1),
\]
with equality precisely at $-1,A,B,C,D$.
\end{lemma}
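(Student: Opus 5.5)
The plan is to use the Hermite interpolation remainder formula, as sketched in Step~2. Set $g=\phi_s-H_s$ on $[-1,1)$ and let
\[
 \omega(t)=(t+1)^5(t-A)^2(t-B)^2(t-C)^2(t-D)^2,
\]
a polynomial of degree $13$. It is nonnegative on $\R$ and vanishes exactly at $-1,A,B,C,D$. The standing assumption that the four nodes are distinct is needed here. By Proposition~\ref{prop:parameter} we have $a_s\in(0,1)$, so $A=a_s\in(0,1)$, $B=2a_s-1\in(-1,1)$, and $C,D\in(-1,1)$; the latter holds since $d_C,d_D\in(0,4)$. Hence all nodes lie in $[-1,1)$, and the equalities $g=0$ at the nodes are immediate from \eqref{eq:hermite}.

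\textbf{Positivity of $\phi_s^{(13)}$.} On $(-\infty,1)$, write $\phi_s(t)=\psi(2-2t)$. For $s>0$ we have $\psi(u)=(u^{-s/2}-1)/s$, and for $s=0$ we have $\psi(u)=-\tfrac12\log u$. In both cases $\psi'(u)=-\tfrac12u^{-s/2-1}$, which is minus a completely monotonic function. Hence $(-1)^k\psi^{(k)}(u)<0$ for every $k\ge1$. The chain rule gives $\phi_s^{(k)}(t)=(-2)^k\psi^{(k)}(2-2t)$, so $\phi_s^{(k)}(t)=-2^k(-1)^k\psi^{(k)}(2-2t)\cdot(-1)^{2k}>0$ for all $k\ge1$ and $t<1$. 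Concretely,
\[
 \phi_s^{(k)}(t)=2^{k-1}\,\tfrac{s}{2}\bigl(\tfrac s2+1\bigr)\cdots\bigl(\tfrac s2+k-1\bigr)\,(2-2t)^{-s/2-k}\cdot\tfrac{2}{s},
\]
with the obvious interpretation at $s=0$. Thus $\phi_s^{(13)}>0$ on $(-\infty,1)$.

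\textbf{The remainder argument.} Fix $t\in[-1,1)$ that is not a node, and put
\[
 G(x)=g(x)-\frac{g(t)}{\omega(t)}\,\omega(x).
\]
Then $G$ is smooth on $[-1,b]$, where $b=\max(t,A,B,C,D)<1$. It has zeros of total multiplicity $13$ at the nodes (a fivefold zero at $-1$ and double zeros at $A,B,C,D$), plus a simple zero at $t$, for $14$ zeros counted with multiplicity in $[-1,b]$. The generalized Rolle theorem, applied with multiplicities, yields $\xi\in(-1,b)$ with $G^{(13)}(\xi)=0$. Since $\deg H_s\le12$, this reads
\[
 \phi_s^{(13)}(\xi)=13!\,\frac{g(t)}{\omega(t)}.
\]
Therefore $g(t)=\phi_s^{(13)}(\xi)\,\omega(t)/13!$, and this is strictly positive because both factors are positive.

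\textbf{Main obstacle.} The only step needing care is the Rolle count with repeated roots. The standard way to handle it is to note that $G^{(j)}$ loses one zero per differentiation while keeping multiplicities; the fivefold zero at $-1$ is an endpoint zero, which is harmless. Everything stays inside $[-1,1)$, where $\phi_s$ is smooth, so the singularity at $t=1$ never enters. I therefore expect no genuine difficulty.
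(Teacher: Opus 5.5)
Your proposal is correct and is the paper's argument: the Hermite remainder $\phi_s(t)-H_s(t)=\phi_s^{(13)}(\xi)\,\omega(t)/13!$ combined with $\phi_s^{(13)}>0$ on $(-\infty,1)$, except that you also spell out the generalized Rolle derivation of the remainder, which the paper takes as standard. There are two harmless slips. First, in fact $(-1)^k\psi^{(k)}>0$ for $k\ge1$ and $(-2)^k\psi^{(k)}=2^k(-1)^k\psi^{(k)}$, so your two sign errors cancel; your explicit derivative formula, which matches the paper's, gives positivity directly anyway. Second, $\omega$ is nonnegative only on $[-1,\infty)$, not on all of $\R$, but that is the only range you use.
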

\begin{proof}
For $m\ge1$, we have
\[
 \phi_s^{(m)}(t)=2^{m-1}\prod_{j=1}^{m-1}(s/2+j)
                         (2-2t)^{-s/2-m}>0.
\]
At $s=0$, the formula follows by differentiating the logarithm. For $t$ off the interpolation nodes, we have
\[
 \phi_s(t)-H_s(t)=\frac{\phi_s^{(13)}(\xi)}{13!}
 (t+1)^5(t-A)^2(t-B)^2(t-C)^2(t-D)^2
\]
for some $\xi<1$. This is positive for $-1<t<1$ away from the four
nodes. At the interpolation nodes, equality follows from
\eqref{eq:hermite}.
\end{proof}

Set $e_s=\Eh_s(X(a_s))$ and $E_H(Y)=\sum_{i<j}H(y_i\cdot y_j)$.
The preceding lemma gives
\begin{equation}\label{eq:minorant-energy}
 \Eh_s(Y)\ge E_{H_s}(Y),\qquad E_{H_s}(X(a_s))=e_s.
\end{equation}
It remains to prove the lower bound $E_{H_s}(Y)\ge e_s$.

\subsection{Positive three-point kernels}
Let $\Sym$ denote averaging over the six permutations of $(u,v,t)$.
Define
\[
 Q_0=1,\quad Q_1=t-uv,\quad
 Q_k=2(t-uv)Q_{k-1}-(1-u^2)(1-v^2)Q_{k-2}\quad(k\ge2).
\]
For $0\le k\le5$, let $S_k$ be the symmetric matrix of order $7-k$ with
\[
 (S_k)_{ij}=\Sym(u^iv^jQ_k),\qquad0\le i,j\le6-k.
\]
For a labeled configuration $Y$, put
\[
 \mathcal M_k(Y)=\sum_{i,j,l=1}^8
 S_k(y_i\cdot y_j,y_i\cdot y_l,y_j\cdot y_l).
\]
Note that the above sum includes repeated indices.

\begin{lemma}\label{lem:moment}
For every $Y\in(\Sph)^8$, $\mathcal M_k(Y)\succeq0$.
\end{lemma}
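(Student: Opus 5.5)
The plan is to derive the statement from the Bachoc--Vallentin positivity theorem, which Cohn and Woo use in their three-point framework \cite[Section~3]{CW}. The key fact is this. For fixed $e\in\Sph$ and each $k\ge0$, write a unit vector $x$ as $x=(e\cdot x)e+\sqrt{1-(e\cdot x)^2}\,\hat x$ with $\hat x\perp e$. Then the kernel
\[
 (x,y)\mapsto u^iv^j Q_k(u,v,t),\qquad u=e\cdot x,\ v=e\cdot y,\ t=x\cdot y,
\]
has the following structure. Up to a positive constant, $Q_k$ equals $\bigl((1-u^2)(1-v^2)\bigr)^{k/2}P_k(\hat x\cdot\hat y)$, where $P_k$ is the Chebyshev (Gegenbauer $\lambda=0$) polynomial attached to the circle $e^\perp\cap\Sph$. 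I will verify this from the recurrence defining $Q_k$. The normalized inner product $\hat x\cdot\hat y$ is
\[
 \frac{t-uv}{\sqrt{(1-u^2)(1-v^2)}},
\]
and the relation $T_k(c)=2cT_{k-1}(c)-T_{k-2}(c)$, after multiplying through by $((1-u^2)(1-v^2))^{k/2}$, is exactly the recurrence for $Q_k$ with $Q_0=1$ and $Q_1=t-uv$. So $Q_k=((1-u^2)(1-v^2))^{k/2}T_k(\hat x\cdot\hat y)$. When $u=\pm1$ or $v=\pm1$ the factor vanishes for $k\ge1$, so the degenerate case is harmless.

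The proof then goes in three steps.

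First, I show that for fixed $e$, the matrix-valued kernel
\[
 K_k^e(x,y)=\bigl(u^iv^jQ_k\bigr)_{0\le i,j\le6-k}
\]
is positive definite. This means that for any finite family of points $x_1,\dots,x_n$ and vectors $c_1,\dots,c_n$ we have $\sum_{a,b}c_a^{\!\top}K_k^e(x_a,x_b)c_b\ge0$. Set $w_a=\sqrt{1-u_a^2}$. Then $T_k(\hat x_a\cdot\hat x_b)=\cos(k(\theta_a-\theta_b))$, where $\theta_a$ is the angle of $\hat x_a$ in the plane $e^\perp$. Hence
\[
 u_a^iu_b^jQ_k=\mathrm{Re}\bigl(f_{a,i}\overline{f_{b,j}}\bigr),\qquad f_{a,i}=u_a^iw_a^ke^{ik\theta_a},
\]
and the quadratic form equals $\bigl|\sum_a\sum_i c_{a,i}f_{a,i}\bigr|^2\ge0$. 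This is the elementary core of the argument.

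Second, I pass from the fixed-$e$ statement to the symmetrized triple sum. Take $e=y_i$, $x=y_j$, $y=y_l$. Then $u=y_i\cdot y_j$, $v=y_i\cdot y_l$, $t=y_j\cdot y_l$, which matches the argument order of $S_k$ in $\mathcal M_k$. Summing over $i$ the positive semidefinite matrices
\[
 \sum_{j,l}K_k^{y_i}(y_j,y_l)
\]
gives
\[
 \sum_{i,j,l}\bigl(u^iv^jQ_k\bigr)\succeq0 .
\]
This holds as a block form, i.e.\ for every vector $c\in\R^{7-k}$ applied uniformly with $c_a=c$.

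It remains to replace the unsymmetrized entry $u^iv^jQ_k$ by $\Sym(u^iv^jQ_k)$. The full triple sum over ordered $(i,j,l)$, with repetitions, is invariant under permuting the roles of the three indices. A permutation of $(i,j,l)$ induces the corresponding permutation of $(u,v,t)$. Therefore
\[
 \sum_{i,j,l}\Sym(g)(y_i\cdot y_j,y_i\cdot y_l,y_j\cdot y_l)=\sum_{i,j,l}g(\cdot)
\]
for any $g$. This gives $\mathcal M_k(Y)=\sum_{i,j,l}\bigl(u^iv^jQ_k\bigr)\succeq0$.

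The main obstacle is the bookkeeping in this last step. I must check that each permutation of the three inner products is realized by relabeling $(i,j,l)$. For example, swapping $u$ and $t$ corresponds to swapping $i$ and $l$: the triple $(y_j\cdot y_l,\ y_l\cdot y_i,\ y_j\cdot y_i)$ becomes $(t,v,u)$. I also must confirm that repeated indices are included, so that the index set is genuinely permutation invariant. The Chebyshev identification in the first step needs care at the poles $u=\pm1$, which I handle by continuity.
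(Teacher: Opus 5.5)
Your proposal is correct and is essentially the paper's argument: your $f_{a,i}=u_a^iw_a^ke^{ik\theta_a}$ is the paper's $(x\cdot y)^i\zeta_y^k$ with $\zeta_y=e_1\cdot y+ie_2\cdot y$, so $|\sum_{a,i}c_{a,i}f_{a,i}|^2$ is exactly the paper's sum of two outer products, and you then sum over the base point and remove $\Sym$ by the same relabeling argument. The only difference is cosmetic: the paper obtains $Q_k=\Re(\zeta_y^k\overline{\zeta_z^k})$ directly from the recurrence for real parts of powers, which avoids the Chebyshev normalization by $\sqrt{(1-u^2)(1-v^2)}$ and makes the pole case automatic (with the convention $\zeta^0=1$).
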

\begin{proof}
Fix a base point $x$ and an orthonormal basis $e_1,e_2$ of $x^\perp$.
For a vertex $y$, put $\zeta_y=e_1\cdot y+i e_2\cdot y$.
If $u=x\cdot y$, $v=x\cdot z$, and $t=y\cdot z$, then
\[
 \Re(\zeta_y\overline{\zeta_z})=t-uv,\qquad
 |\zeta_y\overline{\zeta_z}|^2=(1-u^2)(1-v^2).
\]
The recurrence relation for the real parts of powers therefore gives
\[
 Q_k(u,v,t)=\Re(\zeta_y^k)\Re(\zeta_z^k)
                   +\Im(\zeta_y^k)\Im(\zeta_z^k).
\]
Here, $\zeta^0=1$, including at $\zeta=0$. Thus, the sum over $y,z$
of the unsymmetrized matrix is a sum of two outer products, with vectors
having coordinates $\sum_y(x\cdot y)^i\Re(\zeta_y^k)$ and
$\sum_y(x\cdot y)^i\Im(\zeta_y^k)$. It is positive semidefinite.
Summing over $x$ proves the claim, since each permutation of the three
inner products is induced by relabeling the three ordered indices.
Consequently, symmetrization leaves the full sum unchanged.
\end{proof}

Let $J$ denote the all-ones matrix of the required order, and set
\[
 R_k=6S_k(u,v,t)+S_k(u,u,1)+S_k(v,v,1)+S_k(t,t,1)+\delta_{k0}J/7.
\]
Write $\Psi_Y$ for summation over ordered triples of distinct indices.
Then
\begin{equation}\label{eq:counting}
 \Psi_YR_k=6\mathcal M_k(Y).
\end{equation}
Indeed, if $T_k=\Psi_YS_k$ and
$D_k=\sum_{i\ne j}S_k(y_i\cdot y_j,y_i\cdot y_j,1)$, then
$\mathcal M_k=T_k+3D_k+8\delta_{k0}J$.
Each substituted term in $R_k$ contributes $6D_k$ under $\Psi_Y$, and
the constant contributes $336\delta_{k0}J/7=48\delta_{k0}J$.

\subsection{A certificate criterion}
We use $\ip{F}{R}=\tr(FR)$ for the trace inner product. The
following criterion is the common factor in the two certificates.

\begin{proposition}[Three-point certificate]\label{prop:certificate}
Fix $s\ge0$ for which the four nodes are distinct, and choose $H_s$ as
in \eqref{eq:hermite}. Suppose that there are matrices $F_k\succeq0$,
polynomial column vectors $z_j$, matrices $W_j\succeq0$, and multipliers
$g_j\in\{1,1-u^2\}$ such that the polynomial identity
\begin{equation}\label{eq:certificate}
 \frac{H_s(u)+H_s(v)+H_s(t)}3-\frac{e_s}{28}
 =\sum_{k=0}^5\ip{F_k}{R_k}
       +\sum_j\Sym(g_j z_j^TW_jz_j)
\end{equation}
holds. Then $E_s(Y)\ge E_s(X(a_s))$ for every $Y\in(\Sph)^8$.
If equality holds, every off-diagonal inner product of $Y$ belongs to
$\{-1,A,B,C,D\}$, and every nonnegative summand in the summed
certificate vanishes.
\end{proposition}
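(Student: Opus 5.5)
The plan is to evaluate the identity \eqref{eq:certificate} at the three inner products of every ordered triple of distinct vertices of $Y$ and sum. Fix $Y\in(\Sph)^8$. We may assume the points are distinct, since otherwise $E_s(Y)=+\infty$ and there is nothing to prove; then every off-diagonal inner product lies in $[-1,1)$. For an ordered triple $(i,j,l)$ of distinct indices, substitute $u=y_i\cdot y_j$, $v=y_i\cdot y_l$, $t=y_j\cdot y_l$ and apply $\Psi_Y$.

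\textbf{Left-hand side.} There are $8\cdot7\cdot6=336$ ordered triples, and each unordered pair $\{i,j\}$ occurs as each of the three edges of a triple. Counting incidences gives $\Psi_Y\bigl(H_s(u)+H_s(v)+H_s(t)\bigr)=3\cdot 2\cdot 6\,E_{H_s}(Y)=36E_{H_s}(Y)$. Dividing by $3$ gives $12E_{H_s}(Y)$, and the constant term contributes $336e_s/28=12e_s$. The left side therefore sums to $12\bigl(E_{H_s}(Y)-e_s\bigr)$.

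\textbf{Right-hand side.} By linearity of the trace and \eqref{eq:counting},
\[
\Psi_Y\ip{F_k}{R_k}=\ip{F_k}{6\mathcal M_k(Y)}\ge0,
\]
because $F_k\succeq0$ and $\mathcal M_k(Y)\succeq0$ by Lemma~\ref{lem:moment}, and the trace inner product of two positive semidefinite matrices is nonnegative. Each scalar term $g_jz_j^TW_jz_j$ is pointwise nonnegative at any real $(u,v,t)$ with $|u|\le1$, since $W_j\succeq0$ and $g_j\in\{1,1-u^2\}$. Averaging over permutations with $\Sym$ only permutes arguments that all lie in $[-1,1]$, so each $\Sym$ term is nonnegative at every triple.

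Hence $E_{H_s}(Y)\ge e_s$. Combined with \eqref{eq:minorant-energy}, this gives $\Eh_s(Y)\ge E_{H_s}(Y)\ge e_s=\Eh_s(X(a_s))$. Since $\Eh_s$ is a positive affine rescaling of $E_s$ for $s>0$ and equals $E_0$ for $s=0$, we obtain $E_s(Y)\ge E_s(X(a_s))$.

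\textbf{Equality case.} If $E_s(Y)=E_s(X(a_s))$, both inequalities in the chain are equalities. First, $\sum_{i<j}\bigl(\phi_s-H_s\bigr)(y_i\cdot y_j)=0$ with every summand nonnegative, so each summand vanishes. Lemma~\ref{lem:minorant} then places every off-diagonal inner product in $\{-1,A,B,C,D\}$. Second, the summed right-hand side equals zero while being a sum of nonnegative quantities: the traces $\ip{F_k}{6\mathcal M_k(Y)}$ and the individual $\Sym$ terms at each triple. So each of these vanishes.

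I expect the main obstacle to be the bookkeeping that justifies the per-triple nonnegativity and the counting. One must check that the symmetrized terms with multiplier $1-u^2$ remain nonnegative after permutation, which holds because all inner products lie in $[-1,1]$. One must also confirm that \eqref{eq:counting} correctly accounts for the degenerate triples with repeated indices built into $\mathcal M_k$, so that no stray diagonal contribution breaks the sign argument.
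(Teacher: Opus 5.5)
Your proposal is correct and follows the paper's proof essentially step for step. Like the paper, you sum the identity over the $336$ ordered triples of distinct vertices and use the count of $36$ occurrences per unordered pair to obtain $12(E_{H_s}(Y)-e_s)$. You then bound the matrix terms from below by zero via \eqref{eq:counting}, Lemma~\ref{lem:moment} and $\tr(FM)\ge0$ for positive semidefinite $F,M$. Finally, you combine with \eqref{eq:minorant-energy} and read off the equality case from the vanishing of the nonnegative terms.

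Your extra remarks only make explicit what the paper leaves implicit. These are the handling of coincident points via infinite energy, and the observation that the $(1-u^2)$ multipliers stay nonnegative under $\Sym$ because all arguments lie in $[-1,1]$.
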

\begin{proof}
Every scalar term
on the right
of \eqref{eq:certificate} is nonnegative. By \eqref{eq:counting} and
Lemma~\ref{lem:moment}, each summed matrix term is nonnegative as well. Indeed,
for positive semidefinite $F,M$, $\tr(FM)=\tr(F^{1/2}MF^{1/2})\ge0$.
Each unordered pair occurs $36$ times among the three positions of the
$336$ ordered triples. Thus, summing \eqref{eq:certificate} yields
\begin{equation}\label{eq:summed}
 12(E_{H_s}(Y)-e_s)
 =6\sum_k\ip{F_k}{\mathcal M_k(Y)}
      +\sum_j\Psi_Y\Sym(g_jz_j^TW_jz_j)\ge0.
\end{equation}
We now combine this with \eqref{eq:minorant-energy} and undo the normalization.
 If equality holds, then all of the
nonnegative minorant gaps and certificate terms vanish separately.
Lemma~\ref{lem:minorant} then gives the desired result.
\end{proof}

\section{The equality configurations}\label{sec:equality}
The logarithmic certificate determines the number of neighbors of
each type at every vertex. This recovers the two squares directly.
For the Coulomb certificate, we instead classify the possible Gram
matrices using the four allowed inner products. We keep $a=a_s$
and the notation of Section~\ref{sec:minorants}.

\subsection{Recovery from moments}
Let
\[
 m_*=(1+2A^i+B^i+2C^i+2D^i)_{i=0}^6.
\]
These are the power sums at any vertex of $X(a)$. In particular,
$(m_*)_0=8$ and $(m_*)_1=0$.

\begin{proposition}\label{prop:moment-unique}
Assume the hypotheses of Proposition~\ref{prop:certificate}, and suppose that the following hold:
\begin{enumerate}
\item the four nodes are distinct and interior, and $\alpha+\beta\ne0$
for all $\alpha,\beta\in\{A,B,C,D\}$, allowing $\alpha=\beta$;
\item $\ker F_0=\R m_*$;
\item one unweighted term is $\Sym(z_0^TW_0z_0)$ with $W_0\succ0$, and
$z_0(A,A,A)$, $z_0(A,A,C)$, and $z_0(A,A,D)$ are all nonzero.
\end{enumerate}
Then equality in the energy bound holds only for configurations congruent
to $X(a_s)$.
\end{proposition}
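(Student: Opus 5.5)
Suppose $Y$ attains equality. By Proposition~\ref{prop:certificate}, every off-diagonal inner product of $Y$ lies in $\{-1,A,B,C,D\}$, and every nonnegative summand in \eqref{eq:summed} vanishes. In particular $\ip{F_0}{\mathcal M_0(Y)}=0$, and $\Psi_Y\Sym(z_0^TW_0z_0)=0$.

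\emph{Excluding antipodal pairs.} If $y_i\cdot y_j=-1$, then for any third vertex $y_l$ we have $y_l\cdot y_j=-(y_l\cdot y_i)$. Both values must lie in $\{-1,A,B,C,D\}$. The value $-1$ is impossible, since it would force $y_l=y_j$ or $y_l=y_i$; interiority of the nodes also excludes $\pm1$. So the two values form a pair $\alpha,-\alpha$ with $\alpha,-\alpha\in\{A,B,C,D\}$, i.e.\ $\alpha+\beta=0$, which hypothesis (i) forbids (including $\alpha=\beta=0$). Hence all off-diagonal inner products lie in $\{A,B,C,D\}$.

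\emph{Neighbor counts from the moment matrix.} Write $\mathcal M_0(Y)=\sum_x w_xw_x^T$, where $w_x=(\sum_{z}(x\cdot z)^i)_{i=0}^6$ is the power-sum vector at the vertex $x$; this is the $k=0$ case of the proof of Lemma~\ref{lem:moment}. Since $F_0\succeq0$ and $\tr(F_0\mathcal M_0)=\sum_x w_x^TF_0w_x=0$, every $w_x$ lies in $\ker F_0=\R m_*$. The zeroth coordinate of $w_x$ is $8$, equal to $(m_*)_0$, so $w_x=m_*$ for every vertex. Let $n_\alpha(x)$ be the number of $\alpha$-neighbors of $x$. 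The equations $1+\sum_\alpha n_\alpha\alpha^i=(m_*)_i$ for $i=0,\dots,3$ form a Vandermonde system in the four distinct nodes, so $(n_A,n_B,n_C,n_D)=(2,1,2,2)$ at every vertex. The $i=1$ equation gives $\sum_z x\cdot z=0$ for each $x$. Summing over $x$ gives $\norm{\sum_z z}^2=0$, so the centroid is zero.

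\emph{Excluding triangles and recovering the squares.} The summed term $\Psi_Y\Sym(z_0^TW_0z_0)$ is a sum of nonnegative quantities that vanishes, and $W_0\succ0$, so $z_0$ vanishes on every permutation of the inner-product triple of every triangle of distinct vertices. Consider a triangle with two $A$-edges at a common vertex and third edge of type $\gamma$. One of its ordered triples is $(A,A,\gamma)$. By hypothesis (iii), $z_0$ is nonzero there for $\gamma\in\{A,C,D\}$, so $\gamma=B$. Now form the $A$-graph. It is $2$-regular, hence a disjoint union of cycles, and the cycles have length $\ge3$.
\begin{itemize}
\item A $3$-cycle would be a triangle of three $A$-edges, which was just excluded.
\item In a cycle of length $\ge5$, each vertex $x$ has two $A$-neighbors, and these must be $B$-related to each other. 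The vertex $x$ is also the middle of a path $x^-\,x\,x^+$, so $x^-$ and $x^+$ are $B$-related, and this repeats along the cycle. Consider a path $v_1v_2v_3v_4$ in the cycle. Then $v_1\cdot v_3=B$ and $v_2\cdot v_4=B$. The vertex $v_3$ already has its unique $B$-neighbor $v_1$, yet the triangle $v_3v_4v_5$ forces $v_3\cdot v_5=B$. This needs $v_5=v_1$, i.e.\ a $4$-cycle.
\end{itemize}
Since there are eight vertices, the $A$-graph is two $4$-cycles. Within each $4$-cycle, consecutive vertices have inner product $A$ and opposite vertices have inner product $B=2a-1$. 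This Gram matrix is that of a square at height $\sqrt a$, with center of norm $\sqrt a$, lying in a plane perpendicular to its center.

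\emph{Relative position of the two squares.} The remaining $16$ inner products between the squares are all $C$ or $D$; each vertex has two of each. The centroid condition makes the two square centers $c,-c$, so the squares lie in the parallel planes $z=\pm\sqrt a$. Put the upper vertices at angles $\theta_j$ and the lower ones at $\psi_k$. Then an inner product between squares is $-a+(1-a)\cos(\theta_j-\psi_k)$. Setting it equal to $C$ or $D$ gives $\cos(\theta_j-\psi_k)=\pm1/\sqrt2$, which forces a relative offset of $\pi/4$ modulo $\pi/2$. Hence $Y$ is congruent to $X(a_s)$.

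\emph{Main obstacle.} The delicate step is the graph argument in the previous paragraph. It requires a careful bookkeeping of which ordered triples must vanish in $z_0$, and of how the $B$-uniqueness interacts with paths of length three.
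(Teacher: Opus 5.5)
Your proof is correct and follows the paper's argument. The steps match: antipodal pairs are excluded via (i); $m_y\in\ker F_0=\R m_*$ gives the neighbor counts $(2,1,2,2)$ and zero centroid; hypothesis (iii) forces every triangle with two $A$-edges to have a $B$-edge, so the $A$-graph is two $4$-cycles with $B$-diagonals; and the $C/D$ products fix the relative rotation $\pi/4$. The differences are only cosmetic: you exclude $3$-cycles and long cycles of the $A$-graph, as in the paper's Step~5 overview, rather than building the $4$-cycle directly from a vertex as the formal proof does; and you cite the Gram matrix and angles where the paper computes the center and uses coordinates $(\alpha,\beta,-h)$.
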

\begin{proof}
Let $Y$ attain equality. It has distinct vertices. If two were antipodal,
their inner products with any third vertex would be opposite numbers
$t,-t$. Neither could be $\pm1$, since that would repeat a vertex.
The restriction on off-diagonal inner products in Proposition~\ref{prop:certificate} and
assumption (i) therefore exclude antipodal pairs.

For a vertex $y$ set $m_y=(\sum_{z\in Y}(y\cdot z)^i)_{i=0}^6$.
Since $\mathcal M_0(Y)=\sum_y m_y m_y^T$, equality implies
$\sum_y m_y^TF_0m_y=0$. Each summand is nonnegative, so $m_y\in\ker F_0$.
The zeroth coordinates are all $8$, whence $m_y=m_*$.
Subtracting the diagonal contribution from coordinates $0,1,2,3$ gives a
Vandermonde system on the four distinct nodes. Its unique solution is
\begin{equation}\label{eq:neighbors}
 \#A=2,\qquad\#B=1,\qquad\#C=2,\qquad\#D=2
\end{equation}
at every vertex. Also, if $S=\sum_y y$, then $y\cdot S=0$ for every
vertex. Summing gives $\norm S^2=0$.

Equality in the unweighted positive definite term forces $z_0$ to
vanish at every ordered triangle, since each term in its symmetrization is
nonnegative. By (iii), a triangle with two $A$ edges must have a $B$
third edge. Choose a vertex $y_0$, its two $A$ neighbors $y_1,y_3$, and
its unique $B$ neighbor $y_2$. The second $A$ neighbor of $y_1$ must
have type $B$ with $y_0$, so it is $y_2$. The same holds for $y_3$.
The four vertices form a four-cycle with $A$ edges and $B$ diagonals.
All their $A$ and $B$ neighbors are internal. Repeating the argument on
the remaining four vertices gives a second such cycle.

The Gram matrix of either cycle is
\[
 \begin{pmatrix}
 1&a&2a-1&a\\ a&1&a&2a-1\\
 2a-1&a&1&a\\a&2a-1&a&1
 \end{pmatrix}.
\]
If $c$ is its center, the row sums give $\norm c^2=a$ and $x_i\cdot c=a$.
The vectors $x_i-c$ lie in $c^\perp$, have squared norm $1-a$, and
form two opposite orthogonal pairs. Thus, the cycle is a square.
The zero centroid makes the other center equal to $-c$.
We now choose orthonormal coordinates in which the first square is
$(\pm r,0,h),(0,\pm r,h)$, where $r=\sqrt{1-a}$ and $h=\sqrt a$.
A vertex $(\alpha,\beta,-h)$ of the second square has products of type
$C$ or $D$ with the first square, since \eqref{eq:neighbors} is already
exhausted for types $A,B$. Hence, $\alpha,\beta\in\{\pm r/\sqrt2\}$.
The four distinct vertices occupy all four choices, giving $X(a)$.
\end{proof}

\subsection{Recovery from finite Gram data}
For $u,v,t,\alpha,\beta,\gamma\in\R$, put
\begin{equation}\label{eq:gram-extension}
\begin{split}
 G(u,v,t)&=\begin{pmatrix}1&u&v\\u&1&t\\v&t&1\end{pmatrix},\qquad
 d(u,v,t)=\det G(u,v,t),\\
 \Delta(u,v,t;\alpha,\beta,\gamma)
 &=(1-t^2)\alpha^2+(1-v^2)\beta^2+(1-u^2)\gamma^2\\
 &\quad+2(vt-u)\alpha\beta+2(ut-v)\alpha\gamma
       +2(uv-t)\beta\gamma-d(u,v,t).
\end{split}
\end{equation}

\begin{proposition}\label{prop:gram-unique}
Let $\mathcal C=\{A,B,C,D\}$ for a fixed $a\in(0,1)$, and assume that
these nodes are distinct, interior, and contain no opposite pair,
including a node opposite to itself. Suppose that:
\begin{enumerate}
\item $d(u,v,t)\ne0$ for every $(u,v,t)\in\mathcal C^3$;
\item for each such triple, either
$$\Delta(u,v,t;\alpha,\beta,\gamma)\ne0$$ for all
$(\alpha,\beta,\gamma)\in\mathcal C^3$, or there is one fixed ordered
triple of candidate vertices with Gram matrix $G(u,v,t)$ such that every
$(\alpha,\beta,\gamma)\in\mathcal C^3$ with $\Delta=0$ is realized by a
candidate vertex outside that triple.
\end{enumerate}
Then every eight-point configuration whose off-diagonal inner products
belong to $\{-1,A,B,C,D\}$ is congruent to $X(a)$.
\end{proposition}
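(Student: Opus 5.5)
We first exclude antipodal pairs and then reconstruct the configuration from three of its vertices by linear algebra. Let $Y=(y_1,\dots,y_8)$ be a configuration whose off-diagonal inner products lie in $\{-1,A,B,C,D\}$. Since the points are presumably distinct, suppose $y_i\cdot y_j=-1$. Then for any third vertex $y_l$ we have $y_l\cdot y_i=-(y_l\cdot y_j)$, and neither value can be $\pm1$, since that would repeat a vertex or produce a second antipode of $y_i$. So both values lie in $\mathcal C$ and form an opposite pair (or a self-opposite zero node), which the hypothesis forbids. Hence every off-diagonal inner product lies in $\mathcal C$. This is the same argument as in the proof of Proposition~\ref{prop:moment-unique}.

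Next fix the first three vertices and set $(u,v,t)=(y_1\cdot y_2,\,y_1\cdot y_3,\,y_2\cdot y_3)\in\mathcal C^3$, so that $G(u,v,t)$ is their Gram matrix. By (i), $d(u,v,t)\ne0$, so $y_1,y_2,y_3$ form a basis of $\R^3$. For any further vertex $y$, write $(\alpha,\beta,\gamma)=(y_1\cdot y,\,y_2\cdot y,\,y_3\cdot y)\in\mathcal C^3$. Then $y$ is uniquely determined by these values: $y=\sum_i c_iy_i$ with $c=G^{-1}(\alpha,\beta,\gamma)^T$. The condition $\norm y^2=1$ reads $(\alpha,\beta,\gamma)G^{-1}(\alpha,\beta,\gamma)^T=1$.

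The key computation identifies this with $\Delta=0$. The entries of $\Delta$ are the cofactors of $G$: the adjugate has diagonal $1-t^2,\,1-v^2,\,1-u^2$ and off-diagonal entries $vt-u,\;ut-v,\;uv-t$ in the positions $(1,2),(1,3),(2,3)$. Multiplying the unit-norm condition by $d=\det G$ gives exactly
\[
\Delta(u,v,t;\alpha,\beta,\gamma)=0.
\]
Equivalently, by the Schur complement, $\det$ of the $4\times4$ Gram matrix equals $-\Delta$, and this must vanish because four vectors in $\R^3$ are linearly dependent.

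Since $Y$ has at least four vertices, some $(\alpha,\beta,\gamma)$ with $\Delta=0$ exists, so the first alternative in (ii) fails for $(u,v,t)$. Hence the second alternative applies: there is an ordered triple $(x_1,x_2,x_3)$ of vertices of $X(a)$ with Gram matrix $G(u,v,t)$. Because both triples are bases with the same Gram matrix, the linear map $Q$ with $Qy_i=x_i$ for $i=1,2,3$ is orthogonal. For each remaining vertex $y_l$, $l=4,\dots,8$, the triple $(\alpha,\beta,\gamma)$ satisfies $\Delta=0$, so by (ii) it is realized by some candidate vertex $x\notin\{x_1,x_2,x_3\}$ with $x\cdot x_i$ equal to the corresponding inner product. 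Then $Qy_l$ and $x$ have the same inner products with the basis $x_1,x_2,x_3$, so $Qy_l=x$.

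Thus $Q$ maps all eight vertices of $Y$ into $X(a)$. The vertices of $Y$ are distinct and $Q$ is injective, so the images are eight distinct points among the eight vertices of $X(a)$. Therefore $QY=X(a)$ up to relabeling, and $Y$ is congruent to $X(a)$.

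The main obstacle is the cofactor bookkeeping: checking that $\Delta$ is exactly $d\,(\mathbf w^TG^{-1}\mathbf w-1)$ with the right sign and matching each off-diagonal coefficient to its position. The rest is elementary. One should also observe that (ii) is used with $(u,v,t)$ taken from the actual configuration, not from an arbitrary triple.
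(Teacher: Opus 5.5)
Your proposal is correct and follows essentially the same route as the paper. You exclude antipodal pairs, use (i) to make the first three vertices a basis, convert $b^TG^{-1}b=1$ into $\Delta=0$ via the adjugate, rule out the first alternative in (ii), and map the first triple orthogonally onto the fixed candidate triple so that every remaining vertex lands on a candidate vertex. The only tightening needed is that distinctness need not be presumed: the nodes are interior, so no off-diagonal inner product equals $1$ and the vertices are automatically distinct, as the paper notes.
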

\begin{proof}
The nodes exclude repeated vertices and, by the antipodal argument above,
also exclude $-1$. The first three vertices have invertible Gram matrix,
so they form a basis of $\R^3$. If $b=(\alpha,\beta,\gamma)^T$ records
the inner products of a fourth unit vector with this basis, then
$b^TG^{-1}b=1$. The adjugate formula gives exactly $\Delta=0$.
The first alternative in (ii) is therefore impossible. We map the first
three vertices to the fixed candidate triple in (ii). Equality of their
Gram matrices makes this linear map orthogonal. Each remaining vertex
has the same three inner products as a candidate vertex, so its image
is that vertex.
The eight images are distinct and hence exhaust the eight candidate
vertices.
\end{proof}

\section{Certificates for the logarithmic and Coulomb cases}\label{sec:cap-fixed}
The companion repository \cite{N8Fixed} contains one certificate for
each energy. We state the finite data needed by the preceding argument,
explain how they give exact identities and positive matrices, and then
complete the proof of Theorem~\ref{thm:global}.

\subsection{The finite certificate statements}

\begin{statement}[Logarithmic certificate]\label{cap:log}
At $a=a_0$, the data in \path{Certificates/Logarithmic/certificate.json}
give the polynomial $H_0$ from \eqref{eq:hermite}, matrices $N_k$ for
$k\in\{0,1,2,4,5\}$, and polynomial vectors $z_0,z_1$ of lengths $18,12$
and degrees at most $6,5$. There are positive definite matrices
$B_0,B_1,B_2,B_4,B_5,W_0,W_1$ of orders
\[
 6,5,4,2,1,18,12
\]
such that, with $F_k=N_kB_kN_k^T$ and $F_3=0$,
\begin{equation}\label{eq:log-certificate}
\begin{split}
 \frac{H_0(u)+H_0(v)+H_0(t)}3-\frac{e_0}{28}
 ={}&\sum_{k\in\{0,1,2,4,5\}}\ip{F_k}{R_k}\\
 &+\Sym(z_0^TW_0z_0)+\Sym((1-u^2)z_1^TW_1z_1).
\end{split}
\end{equation}
The nodes are distinct and interior and have no opposite pairs, including
self-pairs. The last six rows of $N_0$ form the identity matrix,
$N_0^Tm_*=0$, and
\[
 z_0(A,A,A)\ne0,\qquad z_0(A,A,C)\ne0,\qquad z_0(A,A,D)\ne0.
\]
\end{statement}

\begin{statement}[Coulomb certificate]\label{cap:coulomb}
The stationary parameter satisfies
\begin{equation}\label{eq:coulomb-bracket}
\begin{split}
 0.3140893678892018673812332<a_1
 <0.3140893678892018673812333.
\end{split}
\end{equation}
At $a=a_1$, the data in \path{Certificates/Coulomb/certificate.json}
give polynomial matrices $N_k(a)$, $0\le k\le5$, of sizes
\[
 7\times4,\ 6\times5,\ 5\times4,\ 4\times3,\ 3\times2,\ 2\times1,
\]
and polynomial vectors $z_0,z_e,z_o$ of lengths $14,8,3$ and degrees at
most $6,5,5$. Positive definite matrices of orders
\[
 4,5,4,3,2,1,14,8,3
\]
satisfy
\begin{equation}\label{eq:coulomb-certificate}
\begin{split}
 \frac{H_1(u)+H_1(v)+H_1(t)}3-\frac{e_1}{28}
 ={}&\sum_{k=0}^5\ip{N_kB_kN_k^T}{R_k}
       +\Sym(z_0^TW_0z_0)\\
 &+\Sym((1-u^2)z_e^TW_ez_e)
       +\Sym((1-u^2)z_o^TW_oz_o),
\end{split}
\end{equation}
where $H_1$ is the minorant in \eqref{eq:hermite}.
The four nodes satisfy the hypotheses of
Proposition~\ref{prop:gram-unique}. All $64$ Gram determinants $d(u,v,t)$, $(u,v,t)\in\mathcal C^3$, are nonzero. Among the $4096$ possible Gram-extension cases, $105$ are realized by vertices of $X(a_1)$ outside the corresponding fixed candidate triples and the remaining $3991$ have a nonzero extension polynomial \eqref{eq:gram-extension}.
\end{statement}

\subsection{Exact identities and the Coulomb correction}
The logarithmic data use the field $\Q(\sqrt2,\sqrt{58})$ and the four
real numbers
\[
 L_2=\log2,\quad L_A=\log d_A(a_0),\quad
 L_C=\log d_C(a_0),\quad L_D=\log d_D(a_0).
\]
Every stored entry of $H_0$ and the seven coefficient blocks
is affine in these logarithms with coefficients in this field. The
checker verifies all thirteen Hermite conditions and all $102$ symmetric
polynomial coefficients separately in the five formal components
$1,L_2,L_A,L_C,L_D$. Being zero in each component implies the exact real
identity and no linear independence of the logarithms is needed.

For the Coulomb data, the stored Hermite polynomial is $H^C=H_1+1$,
interpolating $(2-2t)^{-1/2}$, and its candidate energy is $E^C=e_1+28$.
This does not change the certificate polynomial, because
\[
 \frac{H^C(u)+H^C(v)+H^C(t)}3-\frac{E^C}{28}
 =\frac{H_1(u)+H_1(v)+H_1(t)}3-\frac{e_1}{28}.
\]
Thus, the supplied unnormalized certificate proves exactly the normalized
identity used here.

Let $\mathcal V$ be the space of symmetric polynomials of total degree
at most $12$ in three variables. Coordinates are individual monomial
coefficients with $0\le \nu_1\le \nu_2\le \nu_3$ and
$\nu_1+\nu_2+\nu_3\le12$;
there are $102$. The associated basis polynomial is the sum of the
\emph{distinct} permutations of
$u^{\nu_1}v^{\nu_2}t^{\nu_3}$.
For a squared half-height $a$, define rows
\begin{equation}\label{eq:Lrows}
 L_0(a)f=\Psi_{X(a)}f,\qquad
 L_1(a)f=\frac{\dd}{\dd a}\Psi_{X(a)}f,
\end{equation}
where the polynomial $f$ is held fixed in the derivative. The pair counts
give
\begin{equation}\label{eq:Lvalues}
\begin{gathered}
 L_0(a)1=336,\qquad L_1(a)1=0,\\
 L_1(a)(u^2+v^2+t^2)=1152(3a-1).
\end{gathered}
\end{equation}
If $b$ is the coefficient vector on the left-hand side of
\eqref{eq:coulomb-certificate}, value and first-derivative interpolation
at $a_1$ give
\begin{equation}\label{eq:target-dependencies}
 L_0(a_1)b=0,\qquad L_1(a_1)b=6F(1,a_1)=0.
\end{equation}
Here, both $H_1$ and $e_1$ are held fixed and the derivative is only of the
test configuration.

The Coulomb checker reconstructs the coefficient matrix
$\mathcal A(a)\in\Q[a]^{102\times192}$ from the supplied bases, using
ordinary upper-triangular matrix entries as coordinates. It verifies
exactly
\[
 L_0(a)\mathcal A(a)=L_1(a)\mathcal A(a)=0.
\]
It also verifies that the coefficient row $(1,1,9)$ is identically zero.
The target has the same zero row. Delete this row and the rows $(0,0,0)$,
$(0,0,2)$, leaving a set $R$ of $99$ rows. The data provide a rational
vector $w^0\in\Q^{192}$, a set $\mathcal J$ of $99$ columns, and a rational
$99\times99$ matrix $C_0$. For
\[
 M=\mathcal A(a_1)_{R,\mathcal J},\qquad r=b_R-\mathcal A(a_1)_Rw^0,
\]
the verified bounds are
\begin{equation}\label{eq:coulomb-correction-bounds}
 \norm{I-C_0M}_\infty\le\tfrac12,\qquad
 \norm {C_0}_\infty\le2^{16},\qquad\norm r_\infty\le2^{-55}.
\end{equation}

For matrices, $\norm{\cdot}_\infty$ denotes the maximum absolute row sum;
for vectors it is the standard supremum norm. The first bound makes
$M$ invertible. Indeed, $Mx=0$ implies $x=(I-C_0M)x$ and hence $x=0$.
Define the exact correction $z=M^{-1}r$ and insert it in the selected
coordinates of $w^0$. The resulting vector $w$ satisfies
$\mathcal A(a_1)_Rw=b_R$. The difference $\mathcal A(a_1)w-b$ is
supported only on the constant and quadratic rows. By the two dependencies
and \eqref{eq:Lvalues}, its quadratic coefficient vanishes since
$a_1\ne1/3$, and then its constant coefficient vanishes. Thus, the full
identity holds exactly.
Moreover, $z=C_0r+(I-C_0M)z$ implies
\[
 \norm z_\infty\le2\norm {C_0}_\infty\norm r_\infty\le2^{-38}.
\]
All nine positive-definiteness checks hold on the entire box
$|w_j-w_j^0|\le2^{-38}$, including unchanged coordinates.
This establishes an exact certificate from a finite residual bound. The exact identity follows from the correction,
while the interval estimates ensure that positivity is preserved.

\subsection{Why the interval bounds imply positivity}
We include the elementary estimate used by both fixed-exponent checkers.
\begin{lemma}\label{lem:pd}
If $B$ is a real symmetric matrix and $P$ is square with
$\norm{PBP^T-I}_\infty\le\delta<1$, then $B\succ0$.
\end{lemma}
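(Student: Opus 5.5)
The plan is to pass from the row-sum bound to a spectral bound, and then transfer positive definiteness from $PBP^T$ back to $B$ by congruence. Set $M=PBP^T$, which is real symmetric because $B$ is. For any eigenpair $Mx=\lambda x$ with $x\ne0$, the operator norm induced by $\norm{\cdot}_\infty$ on vectors is exactly the maximum absolute row sum. This gives
\[
 |\lambda-1|\,\norm x_\infty=\norm{(M-I)x}_\infty\le\norm{M-I}_\infty\norm x_\infty\le\delta\norm x_\infty .
\]
Hence every eigenvalue of $M$ lies in $[1-\delta,1+\delta]$. Since $M$ is symmetric and $\delta<1$, its eigenvalues are real and at least $1-\delta>0$, so $M\succ0$.

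Next I show that $P$ is invertible. If $P^Tx=0$ for some $x\ne0$, then $x^TMx=(P^Tx)^TB(P^Tx)=0$, which contradicts $M\succ0$. So $P^T$ is injective, and since $P$ is square, $P$ is invertible.

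Finally, for any $y\ne0$, put $x=P^{-T}y\ne0$. Then
\[
 y^TBy=x^TPBP^Tx=x^TMx>0,
\]
so $B\succ0$.

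There is no real obstacle here. The only points needing care are:
\begin{enumerate}
\item that $\norm{\cdot}_\infty$ on matrices is the norm induced by the vector sup-norm, so the bound controls eigenvalues;
\item the invertibility of $P$, which is not assumed but follows from positive definiteness of $PBP^T$ as shown above.
\end{enumerate}

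In the application, $P$ would be an approximate inverse Cholesky factor, and the interval arithmetic bounds $\norm{PBP^T-I}_\infty$ uniformly over the box of coefficient vectors. Because the lemma needs only that single scalar inequality, positivity holds at every point of the box, including the exact corrected certificate.
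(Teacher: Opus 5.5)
Your proof is correct, and its skeleton matches the paper's. Both first establish $PBP^T\succ0$ with lower bound $1-\delta$, then deduce that $P^T$ is injective and hence $P$ invertible, and finally substitute $x=P^{-T}y$.

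The only difference is how the first step is obtained. You use that the maximum row-sum norm is the operator norm induced by the sup-norm, so it dominates $|\lambda-1|$ for every eigenvalue $\lambda$ of $PBP^T$. You then invoke the spectral theorem to turn ``all eigenvalues at least $1-\delta$'' into positive definiteness.

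The paper instead bounds the quadratic form directly and never mentions eigenvalues. For symmetric $G$, the inequality $2|x_ix_j|\le x_i^2+x_j^2$, together with equality of row and column sums, gives $|x^TGx|\le\norm G_\infty\norm x^2$ in the Euclidean norm. Applied to $G=PBP^T-I$, this yields $x^TPBP^Tx\ge(1-\delta)\norm x^2$.

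Both arguments use the symmetry of $B$. You need it for real eigenvalues and an orthonormal eigenbasis; the paper needs it to replace column sums by row sums. The paper's version is more elementary and self-contained, which is convenient for the Lean formalization. Yours relies on standard but heavier linear algebra and reaches the same quantitative bound.
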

\begin{proof}
For a symmetric matrix $G$, the inequality
$2|x_ix_j|\le x_i^2+x_j^2$ gives
$|x^TGx|\le\norm G_\infty\norm x^2$.
Taking $G=PBP^T-I$ shows
$x^TPBP^Tx\ge(1-\delta)\norm x^2$.
Thus $P^T$ is injective, hence invertible, and the assertion follows by
substituting $x=P^{-T}y$.
\end{proof}

Both checkers use integer endpoints for dyadic intervals, at precision
$2^{-256}$ in the logarithmic case and $2^{-128}$ in the Coulomb case.
Multiplication and reciprocals use integer floor and ceiling division
outwards. Square-root enclosures are checked by inequalities between
integer squares. For every coefficient block $B$, a supplied rational
matrix $P$ satisfies $\norm{PBP^T-I}_\infty<1/2$ throughout the computed
enclosure. The node, forbidden-triangle, and Gram-extension exclusions
use intervals strictly separated from zero.

Only the logarithmic certificate requires transcendental enclosures.
For $1\le x\le4$, let $z=(x-1)/(x+1)\in[0,3/5]$. The finite geometric
series integrated on $[0,z]$ gives
\begin{equation}\label{eq:log-series}
 \log x=2\sum_{j=0}^{m-1}\frac{z^{2j+1}}{2j+1}+R_m,
 \qquad 0\le R_m\le
 \frac{2(3/5)^{2m+1}}{(2m+1)(1-(3/5)^2)}.
\end{equation}
Indeed, integrate
$(1-t^2)^{-1}=\sum_{j=0}^{m-1}t^{2j}+t^{2m}/(1-t^2)$ and multiply
by two. Bounding the denominator in the remainder by $1-z^2$ proves
the estimate. The checker verifies that all four inputs lie
in $[1,4]$ and uses $m=256$.

\subsection{Proof of the main theorem}
\begin{proof}[Proof of Theorem~\ref{thm:global}]
We use Statement~\ref{cap:log} at $s=0$ and
Statement~\ref{cap:coulomb} at $s=1$.
Both identities satisfy Proposition~\ref{prop:certificate}, which proves
the energy inequality. In the logarithmic case $N_0$ has rank six and
$N_0^Tm_*=0$. Since $B_0\succ0$, we have
\[
 \ker(N_0B_0N_0^T)=\ker N_0^T=\R m_*.
\]
Proposition~\ref{prop:moment-unique} gives uniqueness.
In the Coulomb case, Proposition~\ref{prop:gram-unique} determines the configuration uniquely from the allowed inner products. Congruent configurations have identical
energies, proving the converse equality assertion.
\end{proof}

\subsection{Reproduction and Lean verification}
The two certificate checks can be run from the companion repository root
using
\begin{verbatim}
(cd Certificates/Logarithmic && python3 verify.py)
(cd Certificates/Coulomb && python3 verify.py)
\end{verbatim}
They require only the Python standard library and use no optimization
solver. The checks establish the polynomial identities, the positivity
bounds, and the equality conditions stated above.

The Lean proof entry point is \path{Showcase_WithProofs.lean}.
It includes proofs of the logarithmic and Coulomb minima. The file
\path{Checks/Axioms.lean} checks the dependencies of these statements
against \texttt{propext}, \texttt{Classical.choice}, and
\texttt{Quot.sound}. The file \texttt{Showcase.lean} contains only
statements and placeholders.

\section{Extension to general Riesz energies}\label{sec:riesz}
Having established global optimality for $s=0,1$, we now discuss the case of general $s\geq 0$.
For large $s$, the packing theorem places every energy minimizer near the optimal eight-point packing. Infinitesimal rigidity gives a quantitative
bound on this neighborhood. On a slightly larger neighborhood, the
energy is strictly convex in coordinates transverse to rotations.
The stationary configuration $X(a_s)$ is therefore the only minimizer
up to congruence.

\subsection{The optimal eight-point packing}\label{sec:tail}
Set
\[
 a_\infty=\frac{2\sqrt2-1}{7},
 \quad P=X(a_\infty).
\]
We use the classical eight-point packing theorem of Sch\"utte and
van der Waerden and Danzer \cite{SvW51,Dan86}. In the present notation,
it says that
\begin{equation}\label{eq:packing}
 \max_{Y\in(\Sph)^8}\min_{i<j}\norm{y_i-y_j}^2
 =\tau:=2(1-a_\infty)=\frac8{4+\sqrt2},
\end{equation}
and that equality characterizes $P$ up to congruence. Dostert, de Laat, and Moustrou \cite[Theorem~4.4]{DLM}
give a modern computer-assisted proof of the uniqueness assertion.
\subsection{Infinitesimal rigidity of the contact graph}
Order the vertices cyclically as
\[
 p_j=(r\cos\theta_j,r\sin\theta_j,(-1)^jh),\quad
 \theta_j=j\pi/4,\quad r=\sqrt{1-a_\infty},\quad h=\sqrt{a_\infty},
\]
with indices modulo eight. The contact edges are
$\{j,j+1\}$ and $\{j,j+2\}$, $j=0,\ldots,7$.
Moreover, we set
$
 \kappa=1+\frac1{\sqrt2}.
$
Figure~\ref{fig:contacts} gives a planar drawing of this graph.
\begin{figure}[tbp]
\centering
\begin{tikzpicture}[line cap=round,line join=round,font=\small]
\path[use as bounding box] (-5.25,-2.75) rectangle (6.50,2.75);
\draw[figureInk!85,line width=0.75pt] (-1.220000,0.000000) -- (-0.549390,2.050610);
\draw[figureInk!85,line width=0.75pt] (-0.549390,2.050610) -- (-2.600000,1.380000);
\draw[figureInk!85,line width=0.75pt] (-2.600000,1.380000) -- (-4.650610,2.050610);
\draw[figureInk!85,line width=0.75pt] (-4.650610,2.050610) -- (-3.980000,0.000000);
\draw[figureInk!85,line width=0.75pt] (-3.980000,0.000000) -- (-4.650610,-2.050610);
\draw[figureInk!85,line width=0.75pt] (-4.650610,-2.050610) -- (-2.600000,-1.380000);
\draw[figureInk!85,line width=0.75pt] (-2.600000,-1.380000) -- (-0.549390,-2.050610);
\draw[figureInk!85,line width=0.75pt] (-0.549390,-2.050610) -- (-1.220000,0.000000);
\draw[figureBlue,line width=1.25pt] (-1.220000,0.000000) -- (-2.600000,1.380000);
\draw[figureOchre,line width=1.25pt] (-0.549390,2.050610) -- (-4.650610,2.050610);
\draw[figureBlue,line width=1.25pt] (-2.600000,1.380000) -- (-3.980000,0.000000);
\draw[figureOchre,line width=1.25pt] (-4.650610,2.050610) -- (-4.650610,-2.050610);
\draw[figureBlue,line width=1.25pt] (-3.980000,0.000000) -- (-2.600000,-1.380000);
\draw[figureOchre,line width=1.25pt] (-4.650610,-2.050610) -- (-0.549390,-2.050610);
\draw[figureBlue,line width=1.25pt] (-2.600000,-1.380000) -- (-1.220000,0.000000);
\draw[figureOchre,line width=1.25pt] (-0.549390,-2.050610) -- (-0.549390,2.050610);
\filldraw[fill=figureBlue,draw=white,line width=0.65pt] (-1.220000,0.000000) circle[radius=3pt];
\node[anchor=east,inner sep=1pt] at (-1.490000,0.000000) {$p_{0}$};
\filldraw[fill=figureOchre,draw=white,line width=0.65pt] (-0.549390,2.050610) circle[radius=3pt];
\node[anchor=south west,inner sep=1pt] at (-0.344329,2.255671) {$p_{1}$};
\filldraw[fill=figureBlue,draw=white,line width=0.65pt] (-2.600000,1.380000) circle[radius=3pt];
\node[anchor=north,inner sep=1pt] at (-2.600000,1.110000) {$p_{2}$};
\filldraw[fill=figureOchre,draw=white,line width=0.65pt] (-4.650610,2.050610) circle[radius=3pt];
\node[anchor=south east,inner sep=1pt] at (-4.855671,2.255671) {$p_{3}$};
\filldraw[fill=figureBlue,draw=white,line width=0.65pt] (-3.980000,0.000000) circle[radius=3pt];
\node[anchor=west,inner sep=1pt] at (-3.710000,0.000000) {$p_{4}$};
\filldraw[fill=figureOchre,draw=white,line width=0.65pt] (-4.650610,-2.050610) circle[radius=3pt];
\node[anchor=north east,inner sep=1pt] at (-4.855671,-2.255671) {$p_{5}$};
\filldraw[fill=figureBlue,draw=white,line width=0.65pt] (-2.600000,-1.380000) circle[radius=3pt];
\node[anchor=south,inner sep=1pt] at (-2.600000,-1.110000) {$p_{6}$};
\filldraw[fill=figureOchre,draw=white,line width=0.65pt] (-0.549390,-2.050610) circle[radius=3pt];
\node[anchor=north west,inner sep=1pt] at (-0.344329,-2.255671) {$p_{7}$};
\node[anchor=west] at (1.00,1.53) {Between the squares};
\draw[figureInk!85,line width=0.75pt] (1.00,1.12) -- (1.78,1.12);
\node[anchor=west] at (1.98,1.12) {$\{j,j+1\},\quad w=1$};
\node[anchor=west] at (1.00,0.12) {Within each square};
\draw[figureBlue,line width=1.25pt] (1.00,-0.25) -- (1.78,-0.25);
\draw[figureOchre,line width=1.25pt] (1.00,-0.45) -- (1.78,-0.45);
\node[anchor=west] at (1.98,-0.35) {$\{j,j+2\},\quad w=\kappa$};
\node[anchor=west] at (1.00,-1.20) {$\kappa=1+1/\sqrt2$};
\node[anchor=west] at (1.00,-1.82) {16 contacts of squared length $\tau$};
\end{tikzpicture}
\caption{The contact graph of the optimal packing $P=X(a_\infty)$.
The inner blue cycle represents the upper square, and the outer brown
cycle represents the lower square. The weights give the positive
linear dependence in Lemma~\ref{lem:rigidity}. All indices are taken
modulo eight.}
\label{fig:contacts}
\end{figure}
At $p_j$, use the orthonormal tangent frame
\[
 u_j=(-\sin\theta_j,\cos\theta_j,0),\qquad
 v_j=(-h\cos\theta_j,-h\sin\theta_j,(-1)^jr)
\]
and write a tangent vector as $(\alpha_j u_j+\beta_jv_j)_j$.

\begin{lemma}\label{lem:rigidity}
The differentials of the sixteen contact squared distances have common
kernel equal to the three-dimensional space of infinitesimal rotations.
They have a strictly positive linear dependence with weight $\kappa$
on each edge $\{j,j+2\}$ and weight $1$ on each edge $\{j,j+1\}$.
\end{lemma}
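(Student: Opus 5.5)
We verify both assertions directly in the tangent frame, working at the level of the $16\times16$ linear map $\Lambda$ that sends a tangent vector $(\alpha_j,\beta_j)_j$ to the differentials of the sixteen contact squared distances.

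\textbf{Step 1: the differentials.} For an edge $\{i,j\}$ the differential of $\norm{p_i-p_j}^2$ is
\[
2(p_i-p_j)\cdot(\dot p_i-\dot p_j)=-2\,p_j\cdot\dot p_i-2\,p_i\cdot\dot p_j,
\]
using $p_i\cdot\dot p_i=0$. We compute $p_j\cdot u_i$ and $p_j\cdot v_i$ explicitly from $\theta_j-\theta_i\in\{\pm\pi/4,\pm\pi/2\}$ and the relations $r^2=1-a_\infty$, $h^2=a_\infty$. By the rotational symmetry $j\mapsto j+2$ (rotation by $\pi/2$), together with the reflection-rotation symmetry $j\mapsto j+1$ (rotation by $\pi/4$ composed with $z\mapsto -z$), each row has one of two fixed patterns transported around the cycle. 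Thus $\Lambda$ is block-circulant with respect to the dihedral-type symmetry group acting on the indices.

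\textbf{Step 2: the positive dependence.} The positive dependence is equivalent to stationarity of $\sum_e w_e\norm{p_i-p_j}^2$ at $P$ for tangent motions. Equivalently, at each vertex $p_j$ the weighted sum of contact vectors
\[
\kappa(p_{j+2}+p_{j-2})+(p_{j+1}+p_{j-1})
\]
must be parallel to $p_j$. By the reflection through the plane containing $p_j$ and the $z$-axis, the $u_j$-component vanishes automatically. It therefore remains to check that the $v_j$-component vanishes. This is a single scalar equation in $r,h$, and it should reduce exactly to the defining relation of $a_\infty=(2\sqrt2-1)/7$ once $\kappa=1+1/\sqrt2$ is inserted. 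We will check it by direct computation.

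\textbf{Step 3: the common kernel.} Infinitesimal rotations lie in the kernel trivially, since they preserve all distances, and they span a three-dimensional space: a nonzero skew matrix cannot annihilate all eight $p_j$, because these points span $\R^3$. Step 2 shows that $\Lambda$ has rank at most $15$. We must show that its rank is exactly $13$, i.e.\ that the kernel has dimension exactly three. Using the cyclic symmetry, we pass to a Fourier decomposition. With the twisted shift $j\mapsto j+1$, the tangent space and the edge space both decompose into characters $\omega^8=1$, and the shift acts compatibly on the frames $(u_j,v_j)$ up to the sign flips induced by $z\mapsto-z$. Each Fourier mode then gives a $2\times2$ complex block, mapping the $(\alpha,\beta)$ mode to the (edge type $\{j,j+1\}$, edge type $\{j,j+2\}$) mode. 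We compute its determinant as a function of $\omega$. The expected outcome is as follows. The block is nonsingular for all $\omega$ except those modes carrying rotations: the mode $\omega=1$, with the $z$-rotation in the $\alpha$-direction, and the pair $\omega=\pm i$ or its analogue under the twist, carrying the $x$- and $y$-rotations. Each of these exceptional blocks has a one-dimensional kernel. Counting then gives a kernel of dimension exactly three.

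\textbf{Main obstacle.} We expect the main obstacle to be the bookkeeping in Step 3. Getting the twisted shift to act correctly on the frame with the sign $(-1)^j$, and evaluating the eight $2\times2$ determinants with exact surds, is error-prone. The fallback is to treat $\Lambda$ as an explicit $16\times16$ matrix with entries in $\Q(\sqrt2,\sqrt{2\sqrt2-1})$ and verify its rank exactly, for instance by exhibiting a nonzero $13\times13$ minor, since this is straightforward to certify in Lean.
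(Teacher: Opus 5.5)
Your strategy is the paper's. The paper also diagonalizes the contact differentials under the twisted shift $j\mapsto j+1$ and reads the kernel off $2\times2$ mode blocks. Your vertex-equilibrium derivation of the dependence, in which the mirror through $p_j$ and the $z$-axis kills the $u_j$-component, is a valid reformulation of the paper's ``$\alpha$ terms telescope, $\beta$ terms cancel''.

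However, Step~3 stops exactly where the content of the lemma lies. You record an ``expected outcome'' instead of computing the blocks, and that expectation is partly wrong, so as written the rank-$13$ claim is asserted rather than proved. Here is what the computation actually gives:
\begin{itemize}
\item The frame is already equivariant: rotation by $\pi/4$ followed by $z\mapsto-z$ sends $u_j,v_j$ to $u_{j+1},v_{j+1}$, because the factor $(-1)^j$ in $v_j$ absorbs the sign. So no sign flips occur.
\item Dividing the differentials of $p_j\cdot p_{j+2}$ and $p_j\cdot p_{j+1}$ by $r$ gives
\[
 \alpha_j-\alpha_{j+2}+h(\beta_j+\beta_{j+2}),\qquad
 \frac{\alpha_j-\alpha_{j+1}}{\sqrt2}-h\kappa(\beta_j+\beta_{j+1}).
\]
\item In the mode $\alpha_j=\alpha\xi^j$, $\beta_j=\beta\xi^j$, the block and its determinant are
\[
 \begin{pmatrix}1-\xi^2&h(1+\xi^2)\\(1-\xi)/\sqrt2&-h\kappa(1+\xi)\end{pmatrix},
 \qquad
 \det=-h(1+\sqrt2)(1-\xi)(\xi^2+\sqrt2\xi+1).
\]
\item The singular modes are $\xi=1$ and $\xi=e^{\pm3\pi i/4}$, not $\pm i$; the block at $\xi=\pm i$ is invertible. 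The reason is that the $x$- and $y$-rotations transform as an axial vector under this improper symmetry and pick up a factor $-1$.
\item Before counting to a three-dimensional real kernel, you still need rank exactly one at these three modes. At $\xi=1$ the second column is nonzero, and at $e^{\pm3\pi i/4}$ one has $1-\xi^2\neq0$.
\end{itemize}

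A smaller correction to Step~2: the $v_j$-equation does not ``reduce to the defining relation of $a_\infty$''. Since $v_j\cdot p_{j\pm2}=rh$ and $v_j\cdot p_{j\pm1}=-\kappa rh$, the $v_j$-component is $2\kappa rh-2\kappa rh=0$ for every $a\in(0,1)$. Likewise, the determinant above has the same three zeros for every $h>0$. The value $a_\infty$ enters only in deciding that these sixteen pairs are the contacts; the lemma itself holds uniformly in $a$. This also makes your fallback of certifying an exact $13\times13$ minor of the $16\times16$ matrix unnecessary.
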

\begin{proof}
Let $R_{j,l}$ be the differential of $p_j\cdot p_{j+l}$. Direct
differentiation in the displayed frames gives
\[
\begin{aligned}
 R_{j,2}/r&=\alpha_j-\alpha_{j+2}+h(\beta_j+\beta_{j+2}),\\
 R_{j,1}/r&=(\alpha_j-\alpha_{j+1})/\sqrt2
                   -h\kappa(\beta_j+\beta_{j+1}).
\end{aligned}
\]
Their weighted sum vanishes. The $\alpha$ terms telescope, and
$\kappa\sum_jR_{j,2}+\sum_jR_{j,1}=0$ by cancellation of the
$\beta$ terms. The same dependence holds for the squared-distance
differentials, which are $-2R_{j,l}$.

To compute the kernel, complexify the tangent coordinates and use the
eight Fourier modes $\alpha_j=\alpha\xi^j$, $\beta_j=\beta\xi^j$,
where $\xi^8=1$. In a fixed mode, the two equations are
\[
 \begin{pmatrix}
 1-\xi^2&h(1+\xi^2)\\
 (1-\xi)/\sqrt2&-h\kappa(1+\xi)
 \end{pmatrix}
 \binom\alpha\beta=0.
\]
The determinant factors as
\[
 -h(1+\sqrt2)(1-\xi)(\xi^2+\sqrt2\xi+1).
\]
It vanishes precisely at $\xi=1,e^{3\pi i/4},e^{-3\pi i/4}$.
At each of these values, the matrix has rank one. At $\xi=1$, its
second column is nonzero, and at the other two values $1-\xi^2\ne0$.
All other modes have rank two. The real kernel therefore has dimension
three. Infinitesimal rotations belong to it, and they are linearly
independent because the vertices span $\R^3$. This identifies the kernel.
\end{proof}

\subsection{Coordinates transverse to rotations}
We use coordinates that separate rotations from changes in the shape
of the configuration. Equip $(\Sph)^8$ with the product Riemannian
metric. For an antiprism
$X=(x_i)_{i=1}^8$, define
\[
 T_X(\Sph)^8=\{v=(v_i)\in(\R^3)^8:v_i\cdot x_i=0\},\qquad
 \mathcal R_X=\{(\omega\times x_i)_{i=1}^8:\omega\in\R^3\}.
\]
Thus, $\mathcal R_X$ is the space of infinitesimal rotations. Put
$H_X=\mathcal R_X^\perp\subset T_X(\Sph)^8$, where
$\norm v^2=\sum_i\norm{v_i}^2$.
For $z\in H_X$ near zero, define $\chi_X(z)=\operatorname{Exp}_X(z)$,
the product of the spherical exponential maps. The map
\begin{equation}\label{eq:slice}
 (\omega,z)\longmapsto \exp([\omega]_\times)\chi_X(z),\quad
 [\omega]_\times x=\omega\times x,
\end{equation}
has derivative at $(0,0)$ equal to
$(\omega,z)\mapsto(\omega\times x_i)_i+z$.
This is an isomorphism from $\R^3\times H_X$ onto $T_X(\Sph)^8$.
The inverse function theorem therefore supplies local coordinates in which every
nearby configuration has a representative $\chi_X(z)$ after a rotation.
Taking orthogonal images and the finitely many relabelings gives an open
neighborhood of the whole congruence class. The energy in these
coordinates is the smooth function $E_s\circ\chi_X$ on a ball in $H_X$.

We apply the above construction at $P$ and identify $H_P$ with $\R^{13}$ by an
orthonormal basis. For a pair $e=\{i,j\}$, we write
$f_e(z)=\norm{\chi_P(z)_i-\chi_P(z)_j}^2$.
For contact edges, one has
\[
 f_e(z)=\tau+\ell_e(z)+O(\norm z^2),\qquad \ell_e=Df_e(0).
\]
By Lemma~\ref{lem:rigidity}, the $\ell_e$ span the dual of $H_P$ and
have a strictly positive linear dependence. If every $\ell_e(z)$ were
nonnegative, that dependence would make them all zero, forcing $z=0$.
The continuous function $z\mapsto\min_e\ell_e(z)$ is therefore strictly
negative on the unit sphere of $H_P$. Compactness and a uniform Taylor
remainder give constants $c,\varepsilon>0$ such that
\begin{equation}\label{eq:linear-loss}
 \min_{e\text{ contact}}f_e(z)\le\tau-c\norm z
 \quad(\norm z\le\varepsilon).
\end{equation}
All pairs have positive squared distance on a sufficiently small fixed
coordinate ball.

\subsection{Localization and convexity} We now conclude with the proof of Theorem~\ref{thm:large-s}.
\begin{proof}[Proof of Theorem~\ref{thm:large-s}]
Put $p=s/2>0$. If $E_s(Y)\le E_s(P)$, then positivity of the summands
and \eqref{eq:packing} imply
\begin{equation}\label{eq:sublevel-distance}
 \norm{y_i-y_j}^2\ge\tau28^{-1/p}\quad(i\ne j).
\end{equation}
Such configurations approach the congruence class of $P$ uniformly as
$p\to\infty$. Otherwise a sequence outside a fixed open orbit
neighborhood would have a convergent subsequence in $(\Sph)^8$ whose
limit has all squared distances at least $\tau$. The uniqueness in
\eqref{eq:packing} contradicts that choice of neighborhood.

For all sufficiently large $p$, every configuration in this sublevel set
can consequently be represented in the coordinates above with
$\norm z\le\varepsilon$. Combining \eqref{eq:linear-loss} and
\eqref{eq:sublevel-distance} gives
\[
 \norm z\le\frac\tau c(1-28^{-1/p})
           \le\frac{\tau\log28}{cp}=\frac{K}{2p},
 \qquad K=\frac{2\tau\log28}{c}.
\]
The estimate applies to every global minimizer and to $X(a_s)$, since
$E_s(X(a_s))\le E_s(P)$ by minimization within the family.

We prove strict convexity on the larger coordinate ball
$\norm z<K/p$. By Lemma~\ref{lem:rigidity} and continuity, after
shrinking the fixed ball there is $\lambda>0$ such that
\[
 \sum_{e\text{ contact}}(Df_e(z)[v])^2\ge\lambda\norm v^2.
\]
There are also fixed constants $L,M$ such that, for all $28$ pairs, we have
\[
 |f_e(z)-f_e(0)|\le L\norm z,\qquad
 |D^2f_e(z)[v,v]|\le M\norm v^2.
\]
Set $B=LK/\tau$. On $\norm z<K/p$, every squared distance is at least
$\tau(1-B/p)$ and every contact squared distance is at most
$\tau(1+B/p)$. For $p\ge\max(1,2B)$, we have
\[
 (1-B/p)^{-p-1}\le e^{4B},\qquad
 (1+B/p)^{-p-2}\ge e^{-3B}.
\]
Indeed, this follows from $-\log(1-x)\le2x$ for $0\le x\le1/2$ and
$\log(1+x)\le x$ for $x\ge0$.
Differentiating the energy in the coordinate chart gives
\[
 D^2(E_s\circ\chi_P)(z)[v,v]
 =p(p+1)\sum_e f_e^{-p-2}(Df_e[v])^2
        -p\sum_e f_e^{-p-1}D^2f_e[v,v].
\]
We retain the contact terms in the first sum and bound all $28$ terms in
the second sum. The result is
\begin{equation}\label{eq:tail-hessian}
\begin{split}
 D^2(E_s\circ\chi_P)(z)[v,v]\ge p\tau^{-p-2}
 \bigl((p+1)\lambda e^{-3B}-28M\tau e^{4B}\bigr)\norm v^2.
\end{split}
\end{equation}
For every sufficiently large $p$, the bracket above is positive and the
coordinate ball lies inside the fixed chart. Hence, the energy is
strictly convex there.

A global minimizer exists and belongs to this ball after congruence.
The representative of $X(a_s)$ also belongs to the ball and is stationary
by Proposition~\ref{prop:parameter}. A strictly convex differentiable
function on a convex ball has at most one stationary point. Integrating
its positive second derivative along a segment also shows that this
point has strictly smaller value than every other point in the ball.
Thus, every global minimizer is congruent to $X(a_s)$ for all
sufficiently large $s$. Congruent configurations have equal energies,
which proves the asserted equality.
\end{proof}
\begin{remark} \label{rem:all s}
   The compactness argument in the above proof gives the existence of $S$, but does not
provide an explicit value. In the associated GitHub repository, we offer a computer-assisted
proof of global optimality for all $s\geq 0$.
The proof proceeds in two ranges. For $0\le s\le124$, we extend
Steps~2--5 of Section~\ref{sec:proof-guide} to finitely many closed
intervals covering this range, allowing higher-degree polynomial
minorants. On each interval, we rigorously enclose the stationary
parameter $a_s$, correct approximate certificate coefficients so
that the three-point identity holds exactly, and verify the required
matrix positivity and equality conditions uniformly in $s$.
Thus, the sharp energy bound and uniqueness hold for every exponent
in the interval. For $s\ge124$, we make Steps~6--7 quantitative.
The stationarity equations and an energy upper bound obtained from
a suitably chosen antiprism force every possible global minimizer
to have minimum distance close to the optimal packing distance.
An explicit packing gap then places it near the packing antiprism
$P=X(a_\infty)$. Further energy estimates exclude successive annuli
around $P$: configurations in these annuli have energy exceeding
this upper bound. This places every possible global minimizer and
the candidate $X(a_s)$, after fixing rotations, in a common convex
coordinate set of diameter $O(1/s)$. We then prove that the Hessian
of the full energy is positive definite throughout this set for
every $s\ge124$; the relevant inequalities are checked at the
threshold and extended by monotonicity. Since $X(a_s)$ is stationary,
strict convexity forces every global minimizer to coincide with it,
up to congruence.
\end{remark}

\section{Counterexamples to universal optimality}\label{sec:Count}
In this section, we construct a strictly completely monotonic potential of squared distance
for which no square antiprism minimizes the energy of eight points on the
unit sphere.
Our counterexample is the potential $f$ defined by
$$
 f(t)=\frac1{10000+t}+\lambda\left(\frac{13}{9t}\right)^{10000},
 \quad t>0,
$$
with a positive coefficient $\lambda$ specified below. This potential is strictly completely monotonic on $(0,\infty)$, yet a small deformation of $X(5/18)$ has lower energy than every square antiprism. This proves Theorem~\ref{thm:CM} and disproves the square antiprism conjecture of
Cohn and Woo \cite[Conjecture~14]{CW}.

\subsection{The counterexample}
For a potential $v:(0,\infty)\to\R$ and a configuration $Y$ of eight
distinct points on $\Sph$, the corresponding $v$-energy is defined as in \eqref{eq:cm-energy}.
The distances and multiplicities in \eqref{eq:distances} give
\begin{equation}\label{eq:cm-family}
 G_v(a):=E_v(X(a))
 =8v(d_A(a))+4v(d_B(a))+8v(d_C(a))+8v(d_D(a)).
\end{equation}
Now, we fix the constants
\begin{equation}\label{eq:cm-constants}
 a_*:=\frac5{18},\quad n:=10000,\quad
 d_*:=\frac{13}{9}=d_A(a_*),
\end{equation}
and define, for every $t>0$,
\begin{equation}\label{eq:cm-potential}
 g(t)=\frac1{n+t},\quad h(t)=\left(\frac{d_*}{t}\right)^n,
 \quad \lambda=-\frac{G_g'(a_*)}{G_h'(a_*)},\quad f=g+\lambda h.
\end{equation}
Here, the derivatives are taken with respect to $a$, keeping $n$ and
$d_*$ fixed.
Recall that $v$ is \emph{strictly completely monotonic} if it is smooth
on $(0,\infty)$ and $(-1)^k v^{(k)}(t)>0$ for every integer $k\geq0$ and
$t>0$. Replacing $>$ by $\geq$ gives complete monotonicity.

\begin{proposition}\label{prop:cm-counterexample}
The coefficient $\lambda$ in \eqref{eq:cm-potential} is positive and
$f$ is strictly completely monotonic on $(0,\infty)$ with
$\lim_{t\downarrow0}f(t)=+\infty$. Moreover, there exists a configuration $Y\in(\Sph)^8$ of distinct points such that for every $0<a<1$ and every $Z$ that is congruent to $X(a)$ one has
\begin{equation}\label{eq:cm-comparison}
 E_f(Y)<E_f(Z).
\end{equation}
\end{proposition}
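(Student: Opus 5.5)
The plan is to treat $f=g+\lambda h$ as a two-scale perturbation. The term $g$ is almost linear in the squared distance, since $g'(t)=-(n+t)^{-2}\approx -n^{-2}+2tn^{-3}$. The term $h$ is a very steep ``hard-core'' barrier that only sees the $A$-edges of $X(a_*)$, because $d_A(a_*)=d_*$. All other distances are strictly larger: $d_B=26/9$, $d_C(a_*)\approx1.534$ and $d_D(a_*)\approx3.58$, so their $h$-values are exponentially small in $n$. I would prove four things in order: positivity of $\lambda$ and strict complete monotonicity; stationarity of $X(a_*)$ for $E_f$ on all of $(\Sph)^8$; that $a_*$ minimizes $G_f$ over $(0,1)$; and an explicit deformation of $X(a_*)$ that lowers $E_f$ strictly.

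\emph{Step 1: positivity of $\lambda$ and complete monotonicity.} Here $g$ and $h$ are each strictly completely monotonic, since $(-1)^k$ times their $k$-th derivatives are explicit positive multiples of powers. Hence $f$ is strictly completely monotonic as soon as $\lambda>0$, and $h(t)\to\infty$ as $t\downarrow0$ gives the limit.

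For $\lambda>0$, use $d_A'=-2$, $d_B'=-4$, $d_C'=2+\sqrt2$, $d_D'=2-\sqrt2$. The $A$-term then gives $G_h'(a_*)=8\cdot(n/d_*)\cdot2+(\text{exponentially small})>0$.

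For $G_g'(a_*)$, the leading term $-n^{-2}\sum m\,d'$ vanishes identically, because $-16-16+8(2+\sqrt2)+8(2-\sqrt2)=0$. The next term is $2n^{-3}\sum m\,d\,d'$, which at $a_*$ evaluates to $-624/9+8(2+\sqrt2)d_C+8(2-\sqrt2)d_D\approx-10.7<0$. The remaining Taylor error is $O(n^{-4})$, so a rational enclosure proves $G_g'(a_*)<0$ and hence $\lambda>0$, with $\lambda\asymp n^{-4}$.

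\emph{Step 2: stationarity and the family minimum.} The symmetry argument in the proof of Proposition~\ref{prop:parameter} never used the specific potential. So $G_f'(a_*)=0$ implies that $X(a_*)$ is a critical point of $E_f$ on $(\Sph)^8$.

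Next I would show $G_f(a)\ge G_f(a_*)$ for all $a\in(0,1)$, splitting into three ranges.
\begin{enumerate}
\item For $a>a_*+c/n$, the barrier $\lambda\cdot 8(d_*/d_A(a))^n$ grows like $e^{cn}\lambda$ and swamps the bounded $O(n^{-2})$ variation of $G_g$. Near $a\to1$ the barrier tends to infinity.
\item For $a<a_*-c/n$, the term $G_g$ increases at rate about $21n^{-3}$ per unit of $a$. The total possible decrease of $\lambda G_h$ is at most $8\lambda=O(n^{-4})$ until $d_C(a)$ drops below $d_*$ near $a\approx0.25$. Below that point the $C$-barrier takes over, exactly as in the first range.
\item On $|a-a_*|\le c/n$, the second derivative $G_f''$ is dominated by $\lambda\cdot 8\cdot n(n+1)d_*^{-2}\cdot4>0$. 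Hence $G_f$ is convex there, with a critical point at $a_*$.
\end{enumerate}

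\emph{Step 3 (main obstacle): a descending deformation.} I need a curve $Y(\varepsilon)$ through $X(a_*)$ with $E_f(Y(\varepsilon))<E_f(X(a_*))$ for small $\varepsilon\ne0$. The idea is to keep all eight $A$-edges at squared length $d_*$, so the barrier does not change to first order. The $C$- and $D$-barrier contributions are exponentially small and affect nothing. The gain must then come from $g$.

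To second order, $E_g\approx\text{const}-n^{-2}\sum_{i<j}d_{ij}+n^{-3}\sum_{i<j}d_{ij}^2$. The first sum equals $64-\norm{\sum_i y_i}^2$, which is stationary at the centred configuration. The second sum equals $\text{const}+4\sum_{i<j}(y_i\cdot y_j)^2$ up to linear terms, that is, essentially the frame potential. That potential is minimized exactly by tight frames, and $X(a)$ is tight only at $a=1/3\neq a_*$.

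I would therefore take a deformation that moves toward ``tighter'' frames while preserving the $A$-lengths. The first attempt is to fold each square into a non-planar rhombus along one diagonal, keeping all side lengths exact and keeping the centroid at $0$. I would then compute the second variation of $\sum(y_i\cdot y_j)^2$ along this fold and check that it is negative. If folding fails, the fallback is the linear-algebra route. Let $K$ be the kernel of the differentials of the eight $A$-lengths, taken modulo rotations. Compute the Hessian of $E_g$ restricted to $K$, corrected by the $h$-term via the Lagrange-multiplier contribution $\lambda\sum_A h'(d_*)D^2d_A$, and exhibit a negative direction by an exact rational computation.

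Once such a $Y=Y(\varepsilon)$ is found, Steps 2 and 3 give $E_f(Y)<G_f(a_*)\le G_f(a)=E_f(Z)$ for every $Z$ congruent to some $X(a)$, which is \eqref{eq:cm-comparison}. I expect the sign check in Step 3 to be the delicate part. The competing $n^{-3}$ and $\lambda n^2\sim n^{-2}$ scales must be tracked carefully, and a rigorous interval verification would settle it.
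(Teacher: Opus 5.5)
The gap is Step~3, which carries the entire content of the proposition. You never show that some deformation of $X(a_*)$ lowers $E_f$; you only say you would compute a second variation and expect it to be negative. The reason you give, that $X(a_*)$ is not a tight frame, does not decide the sign.

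Once the $A$-lengths are stationary, no $h''(d_*)$ term enters, so $\lambda n^2\sim n^{-2}$ is not the relevant scale. What competes with the curvature of $g$, at the same order $n^{-3}$, is the second-order correction needed to keep the $A$-lengths exact. Equivalently, it is the multiplier term, with $\lambda h'(d_*)\approx-4/(3n^3)$.

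Carrying this out (running the construction at a general $a$ with $d_*=d_A(a)$), the leading coefficient of the second variation along the one-square fold is a positive multiple of $(1-a)(7a-2)$. So the fold is an ascent direction on $(2/7,1/3)$, even though those antiprisms are not tight either, and at $a_*=5/18$ the margin is only $7a_*-2=-1/18$. Likewise, if you rotate two top vertices toward the pole and two away by the same angle without the correction (the paper's curve $Y_{a_*}(u)$), the frame potential \emph{increases} at $a_*$; in the paper's notation this is $Q_{a_*}(t^2)=1312/81>0$. So the sign check is not a routine verification to defer; it is the theorem.

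The paper supplies exactly this computation. It takes the explicit fold $Y_a(u)$ of the top square, along which $z_A'(0)=0$ and $z_A''(0)=16$. It proves $\frac{\dd^2}{\dd u^2}E_v(Y_a(u))|_{u=0}=4Q_a(v)$ for every $v$ (Lemma~\ref{lem:cm-variation}). Then it uses $\lambda=-L(g)/L(h)$ to write $Q(f)=S(g)-L(g)\,S(h)/L(h)$ with $S=Q+L$. In $S(h)$ the large terms $\pm16h'(d_*)$ cancel, so $|S(h)/L(h)|<1/10$. Meanwhile $S(t^2)=-416/81$ gives $n^3S(g)<-4$, and $|n^3L(g)|<22$, so $Q(f)<0$.

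Since $X(a_*)$ is critical for $E_f$ and your exact $A$-preserving one-square fold has the same tangent vector as $Y_{a_*}(u)$, your route would produce the same number. It would do so only after this calculation, with explicit control of the $O(n^{-4})$ Taylor remainder of $g$ and the exponentially small $B,C,D$ contributions of $h$.

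Step~1 is fine. Step~2 is much harder than necessary, and its range (i) is misstated: at $a=a_*+c/n$, $(d_*/d_A(a))^n$ is only $e^{O(c)}$, not $e^{cn}$. Because the squared distances are affine in $a$, \eqref{eq:cm-convexity} gives $G_f''>0$ on all of $(0,1)$, so $G_f'(a_*)=0$ already yields the family minimum.
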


Restricting $f$ to $(0,4]$ gives the counterexample to Conjecture~14 in \cite{CW}. The Lean formulation of Proposition~\ref{prop:cm-counterexample} is given in Section~\ref{subsec:lean-counterexample}. Similar constructions give counterexamples with different functional
forms. Keeping $n=10000$, $a_*=5/18$ and $d_*=13/9$, consider
\[
\begin{array}{c|c|c}
 i & g_i(t) & h_i(t)\\ \hline
 1 & (n+t)^{-1} & (d_*/t)^n\\
 2 & e^{-t/n} & e^{-n(t-d_*)}\\
 3 & (n+t)^{-1/2} & (d_*/t)^n\\
 4 & \log(1+(n+t)^{-1}) & (d_*/t)^n\\
 5 & (4-t)^2 & \bigl((4-t)/(4-d_*)\bigr)^n
\end{array}
\]
and define
\[
 \lambda_i=-\frac{G_{g_i}'(a_*)}{G_{h_i}'(a_*)},
 \qquad f_i=g_i+\lambda_i h_i.
\]
The argument below extends to each of these pairs: $\lambda_i>0$,
and a small deformation of $X(a_*)$ has lower $f_i$-energy than
every square antiprism. The first four potentials are strictly
completely monotonic on $(0,\infty)$, while the fifth is a polynomial
that is completely monotonic on $(0,4]$. Thus all five yield
counterexamples to Conjecture~14 in \cite{CW}.
The first choice recovers the potential in \eqref{eq:cm-potential}
and is the example covered by the Lean formalization.

\subsection{A deformation of the antiprism}
We start by showing that $X(a_*)$ minimizes the energy among square antiprisms and then use a deformation to lower its energy.
Write $\sigma=\sqrt2$. For $v\in C^2((0,\infty))$, differentiation of
\eqref{eq:cm-family} gives
\begin{align}
 L_a(v):=G_v'(a)
 &=-16v'(d_A)-16v'(d_B)
   +8(2+\sigma)v'(d_C)+8(2-\sigma)v'(d_D),\label{eq:cm-L}\\
 G_v''(a)&=32v''(d_A)+64v''(d_B)
   +8(2+\sigma)^2v''(d_C)+8(2-\sigma)^2v''(d_D).
 \label{eq:cm-convexity}
\end{align}
Here, the distances are evaluated at $a$. In particular, $G_v$ is
strictly convex on $(0,1)$ whenever $v''>0$.
For the deformation, we set
\[
 c(u)=\frac{1-u^2}{1+u^2},\quad s(u)=\frac{2u}{1+u^2}
\]
and observe that $c(u)^2+s(u)^2=1$. For $0<a<1$, we put
$r=\sqrt{1-a}$, $b=\sqrt a$ and $k=r/\sqrt2$. Moreover, with
$c=c(u)$ and $s=s(u)$, we define $Y_a(u) \in (\Sph)^8$ via
\begin{equation}\label{eq:cm-deformation}
\begin{split}
 Y_a(u)=\big(& (rc-bs,0,bc+rs),\ (0,rc+bs,bc-rs),\\
            & (-rc+bs,0,bc+rs),\ (0,-rc-bs,bc-rs),\\
            & (k,k,-b),\ (-k,k,-b),\ (-k,-k,-b),\ (k,-k,-b)\big).
\end{split}
\end{equation}
Let $B,C,D$ be the inner products in \eqref{eq:nodes}, evaluated at
$a$, and put $\kappa_\pm=1\pm1/\sqrt2$. For
$v\in C^2((0,\infty))$, we define
\begin{align}
 Q_a(v)={}&16\bigl(v'(d_A)+Bv'(d_B)+Cv'(d_C)+Dv'(d_D)\bigr)\notag\\
 &+128a(1-a)v''(d_B)\notag\\
 &+32a(1-a)\bigl(\kappa_+^2v''(d_C)+\kappa_-^2v''(d_D)\bigr).
 \label{eq:cm-Q}
\end{align}

\begin{lemma}\label{lem:cm-variation}
For $0<a<1$ and $v\in C^2((0,\infty))$, one has
\[
 \left.\frac{\dd}{\dd u}E_v(Y_a(u))\right|_{u=0}=0,
 \quad
 \left.\frac{\dd^2}{\dd u^2}E_v(Y_a(u))\right|_{u=0}=4Q_a(v).
\]
\end{lemma}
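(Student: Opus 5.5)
The proof is a direct second-order Taylor expansion of the $28$ squared distances along the curve $u\mapsto Y_a(u)$, organised by pair type. First we reparametrise. Since $c(u)=\cos\theta$ and $s(u)=\sin\theta$ with $\theta=2\arctan u$, and $\theta'(0)=2$, $\theta''(0)=0$, the chain rule gives
\[
\left.\frac{\dd}{\dd u}E_v(Y_a(u))\right|_{u=0}=2\left.\frac{\dd E}{\dd\theta}\right|_{\theta=0},\qquad
\left.\frac{\dd^2}{\dd u^2}E_v(Y_a(u))\right|_{u=0}=4\left.\frac{\dd^2E}{\dd\theta^2}\right|_{\theta=0}.
\]
It therefore suffices to show $\dd E/\dd\theta=0$ and $\dd^2E/\dd\theta^2=Q_a(v)$ at $\theta=0$. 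Geometrically, the upper vertices $1,3$ rotate by $\theta$ in the $xz$-plane and $2,4$ rotate by $-\theta$ in the $yz$-plane, while the lower square is fixed. At $\theta=0$ the configuration is a congruent copy of $X(a)$, so the squared distances are $d_A,d_B,d_C,d_D$ with the multiplicities of \eqref{eq:distances}.

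For the first variation we use a symmetry rather than computation. The swap $(x,y,z)\mapsto(y,x,z)$ permutes the lower square. It also carries the configuration at $\theta$ to the configuration at $-\theta$, after relabelling $1\leftrightarrow2$ and $3\leftrightarrow4$. Hence $\theta\mapsto E_v(Y_a(\theta))$ is even and its first derivative vanishes at $0$.

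For the second variation we write
\[
\frac{\dd^2E}{\dd\theta^2}\Big|_0=\sum_e\bigl(v''(f_e(0))f_e'(0)^2+v'(f_e(0))f_e''(0)\bigr),
\]
where $f_e(\theta)$ is the squared distance of pair $e$. The six lower pairs contribute nothing. For the four upper $A$-edges, $f=2-2(b^2\cos^2\theta-r^2\sin^2\theta)$. This gives $f'(0)=0$ and $f''(0)=4$, for a total of $16v'(d_A)$. For the two upper diagonals, put $r=\cos\varphi$, $b=\sin\varphi$. Then $f=4\cos^2(\theta\pm\varphi)=2+2\cos(2\theta\pm2\varphi)$, so $|f'(0)|=8rb$ and $f''(0)=-8(r^2-b^2)=8B$. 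Together these give $16Bv'(d_B)+128a(1-a)v''(d_B)$.

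The remaining $16$ cross pairs between the rotating and the fixed square form the main bookkeeping step. For a pair such as $(1,5)$, $f=2-2\bigl(k(r\cos\theta-b\sin\theta)-b(b\cos\theta+r\sin\theta)\bigr)$. Then $f''(0)=2\bigl(kr-b^2\bigr)=2C$ on $C$-pairs, and analogously $2D$ on $D$-pairs. Since there are $8$ of each, this yields $16Cv'(d_C)+16Dv'(d_D)$. The first derivatives are $\pm2b(k+r)=\pm2\sqrt{2}\,rb\,\kappa_+$ on $C$-pairs and $\pm2\sqrt2\,rb\,\kappa_-$ on $D$-pairs, with signs depending on which lower vertex is involved. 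Squaring removes the sign ambiguity, and summing $8\cdot8a(1-a)\kappa_\pm^2\cdot\tfrac12$ gives the terms $32a(1-a)\bigl(\kappa_+^2v''(d_C)+\kappa_-^2v''(d_D)\bigr)$. We identify which cross pair is $C$ and which is $D$ by evaluating the inner product at $\theta=0$. We then use the symmetries $(x,y)\mapsto(-x,y)$ and $(x,y)\mapsto(y,x)$, which preserve the curve, to reduce the $16$ cross pairs to the two representatives $(1,5)$ and $(1,6)$. Adding the three groups gives exactly $Q_a(v)$ as in \eqref{eq:cm-Q}, and the factor $4$ from the reparametrisation completes the lemma.

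The main obstacle is getting the constants in the cross-pair first derivatives exactly right. These must match the normalisation of $\kappa_\pm$ and the coefficient $32$ in \eqref{eq:cm-Q}, so this computation should be double-checked, for example at a numerical value of $a$.
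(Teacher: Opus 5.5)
The overall plan is right and is essentially the paper's computation. You expand each squared distance to second order, group the pairs by type, and sum $v''(z(0))z'(0)^2+v'(z(0))z''(0)$. The reparametrisation $\theta=2\arctan u$ is correct and gives the factor $4$, since $\theta''(0)=0$. Your proof that the first variation vanishes, via the swap $E(\theta)=E(-\theta)$, is a valid alternative to the paper's cancellation of the $\pm$ first derivatives. Your values for the $A$-edges and the diagonals are correct, and so is $f''(0)=2C,2D$ on the cross pairs.

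The step you flagged is wrong as written, however. For $(1,5)$ you correctly find $f'(0)=2b(k+r)$. But $k+r=r/\sqrt2+r=r\kappa_+$, so $f'(0)=2rb\kappa_+$, not $2\sqrt2\,rb\kappa_+$. Likewise $(1,6)$ gives $f'(0)=2b(r-k)=2rb\kappa_-$.

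With your stated values, the eight pairs of each type would contribute $64a(1-a)\kappa_\pm^2$, twice the coefficient in $Q_a$. The factor $\tfrac12$ in ``$8\cdot8a(1-a)\kappa_\pm^2\cdot\tfrac12$'' has no justification; it only cancels the slip, so the derivation does not support the coefficient it reaches. With the correct derivatives, $f'(0)^2=4a(1-a)\kappa_\pm^2$, and $8\cdot4a(1-a)\kappa_\pm^2=32a(1-a)\kappa_\pm^2$ comes out directly. This matches the paper's entry $z'(0)=\pm4q\kappa_\pm$ in the $u$-variable.

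Two smaller inaccuracies:
\begin{enumerate}
\item The swap $(x,y)\mapsto(y,x)$ does not preserve the curve. As your own first-variation argument shows, it reverses it. Transporting the cross pairs of vertex $1$ to those of vertex $2$ therefore negates $f_e'(0)$. This is harmless, since only $f_e'(0)^2$ and $f_e''(0)$ enter, but it should be stated.
\item The sign of $f_e'(0)$ is fixed by the upper vertex, not the lower one. It is $+$ for vertices $1,3$, which rise toward the pole, and $-$ for vertices $2,4$.
\end{enumerate}
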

\begin{proof}
For each pair of vertices, let $z(u)$ denote its squared distance in $Y_a(u)$. The deformation fixes the lower square, so the four pairs of type $A$ and the two pairs of type $B$ inside it are unchanged. For the remaining pairs, we differentiate \eqref{eq:cm-deformation} at $u=0$, using $c'(0)=0$, $s'(0)=2$, $c''(0)=-4$ and $s''(0)=0$, and we write $q=rb$. The values are collected in the following array.
\[
\begin{array}{c|c|c|c|c}
 z&\text{multiplicity}&z(0)&z'(0)&z''(0)\\\hline
 z_A&4&d_A&0&16\\
 z_{B,\pm}&1&d_B&\pm16q&32B\\
 z_{C,\pm}&4&d_C&\pm4q\kappa_+&8C\\
 z_{D,\pm}&4&d_D&\pm4q\kappa_-&8D
\end{array}
\]
Here, the multiplicity in a row with two signs refers to each sign separately. For example, the four pairs of type $A$ in the upper square satisfy $z_A(u)=d_A+2s(u)^2$. The first derivative of the energy at $u=0$ is the sum of $v'(z(0))z'(0)$ over all pairs, and the terms with opposite signs cancel, which gives the first formula. For the second, we sum $v''(z(0))z'(0)^2+v'(z(0))z''(0)$ over all pairs with the stated multiplicities. Using $q^2=a(1-a)$, we obtain $4Q_a(v)$.
\end{proof}

Both $L_a$ and $Q_a$ are linear and vanish on affine functions. Indeed,
their first-derivative coefficients sum to zero, using $1+B+C+D=0$
for $Q_a$. Substitution of $p'(t)=2t$ and $p''(t)=2$, for $p(t)=t^2$,
gives
\begin{equation}\label{eq:cm-polynomial}
 L_{a_*}(p)=-\frac{64}{3},\quad
 (Q_{a_*}+L_{a_*})(p)=-\frac{416}{81}.
\end{equation}

\subsection{Estimates for the two terms}
For the rest of the section, write $L=L_{a_*}$, $Q=Q_{a_*}$, and
$S=Q+L$.

\begin{lemma}\label{lem:cm-slow}
The function $g$ in \eqref{eq:cm-potential} satisfies
\begin{equation}\label{eq:cm-slow}
 -22<n^3L(g)<-20,\quad n^3S(g)<-4.
\end{equation}
\end{lemma}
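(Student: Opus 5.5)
The plan is to expand $g(t)=1/(n+t)$ in powers of $1/n$ and use the fact, noted just after Lemma~\ref{lem:cm-variation}, that $L=L_{a_*}$ and $S=Q_{a_*}+L_{a_*}$ are linear and vanish on affine functions. The quadratic term of the expansion then gives the main contribution through \eqref{eq:cm-polynomial}, and the cubic remainder is of relative size $O(1/n)$. Concretely, for $t>0$ I use the exact identity
\[
 g(t)=\frac1n-\frac{t}{n^2}+\frac{t^2}{n^3}-\frac{1}{n^3}\,r(t),\qquad r(t)=\frac{t^3}{n+t},
\]
which follows from summing the geometric series to three terms. By linearity and the vanishing on affine functions, with $p(t)=t^2$,
\[
 n^3L(g)=L(p)-L(r)=-\frac{64}{3}-L(r),\qquad n^3S(g)=S(p)-S(r)=-\frac{416}{81}-S(r).
\]
Since $-64/3\approx-21.33$ and $-416/81\approx-5.14$, both claimed inequalities follow once I show $|L(r)|<2/3$ and $|S(r)|<1$. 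The true bounds will be far smaller than these.

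To bound the remainders, I work on $(0,4]$, which contains all squared distances. At $a_*=5/18$ these are $d_A=13/9$, $d_B=26/9$, and $d_C,d_D\in(0,4)$, which I would check directly from \eqref{eq:distances}. For $0<t\le4$ I differentiate $r$:
\[
 r'(t)=\frac{3t^2}{n+t}-\frac{t^3}{(n+t)^2},\qquad r''(t)=\frac{6t}{n+t}-\frac{6t^2}{(n+t)^2}+\frac{2t^3}{(n+t)^3}.
\]
This gives the crude bounds $0\le r'(t)\le 3t^2/n\le 48/n$ and $|r''(t)|\le 6t/n+\dots\le 40/n$.

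In \eqref{eq:cm-L}, the absolute values of the coefficients of $v'$ sum to $16+16+8(2+\sigma)+8(2-\sigma)=64$. Hence $|L(r)|\le 64\cdot 48/n<1/3$ for $n=10000$.

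For $Q$ in \eqref{eq:cm-Q}, the $v'$ coefficients are $16,16B,16C,16D$. Each is at most $16$ in absolute value because the inner products lie in $[-1,1]$. The $v''$ coefficients are $128a_*(1-a_*)$ and $32a_*(1-a_*)\kappa_\pm^2$, all bounded by $128\cdot\frac14=32$ up to the factor $\kappa_+^2<3$. A total coefficient budget of a few hundred times $48/n$ or $40/n$ therefore gives $|Q(r)|<1/2$. Combining this with the bound on $L(r)$ yields $|S(r)|<1$.

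The main obstacle is purely bookkeeping. I need to confirm the exact values in \eqref{eq:cm-polynomial}, namely that $L_{a_*}(p)=-64/3$ and $S(p)=-416/81$ with $p'=2t$ and $p''=2$, at $a_*=5/18$ using the nodes \eqref{eq:nodes}. That computation involves $\sqrt2$ terms, which must cancel. Since these values are stated before the lemma, I take them as given. The remaining work is only the elementary remainder bounds above, which have a large margin. In a Lean formalization, one would evaluate $d_C,d_D$ with interval bounds on $\sqrt2$ and discharge the final inequalities with \texttt{norm\_num}-style rational arithmetic.
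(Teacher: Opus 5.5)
Your proposal is correct and is essentially the paper's proof: your exact remainder $r(t)=t^3/(n+t)$ satisfies $g'+1/n^2-2t/n^3=-r'/n^3$ and $g''-2/n^3=-r''/n^3$, which are exactly the quantities the paper bounds by $48/n^4$ and $24/n^4$ on $(0,4]$ before using the coefficient sums $64$, $64$, $56$ together with \eqref{eq:cm-polynomial}. One small slip: with your stated constants, $|Q(r)|\le(64\cdot48+56\cdot40)/n\approx0.53$ is not below $1/2$, but the needed bound still holds since $|S(r)|\le|L(r)|+|Q(r)|\le0.31+0.54<1$, and in any case $0\le r''(t)=2\bigl(1-(n/(n+t))^3\bigr)\le6t/n\le24/n$ gives $|Q(r)|\le4416/n<1/2$.
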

\begin{proof}
For every $x\geq0$, it holds that
\[
 0\leq\frac1{(1+x)^2}-(1-2x)\leq3x^2,\quad
 0\leq1-\frac1{(1+x)^3}\leq3x.
\]
Since $g'(t)=-(n+t)^{-2}$ and $g''(t)=2(n+t)^{-3}$, substitution
of $x=t/n$ gives for every $0<t\leq4$ the estimates
\begin{equation}\label{eq:cm-errors}
 \left|g'(t)+\frac1{n^2}-\frac{2t}{n^3}\right|\leq\frac{48}{n^4},
 \quad
 \left|g''(t)-\frac2{n^3}\right|\leq\frac{24}{n^4}.
\end{equation}
In the definition of $L_a$, the sum of the first-derivative coefficients in absolute value is
$64$. The corresponding sum in $Q_a$ is at most $64$, since
$B,C,D\in[-1,1]$. Its second-derivative coefficients are nonnegative
and have sum
\[
 128a(1-a)+32a(1-a)(\kappa_+^2+\kappa_-^2)
 =224a(1-a)\leq56.
\]
We now apply these bounds to \eqref{eq:cm-errors}. The affine terms vanish
and \eqref{eq:cm-polynomial} yields
\[
 \left|n^3L(g)+\frac{64}{3}\right|\leq\frac{3072}{n},\quad
 \left|n^3S(g)+\frac{416}{81}\right|\leq\frac{7488}{n}.
\]
Here, we used that $3072=64\cdot48$ and $7488=128\cdot48+56\cdot24$.
The claim then follows from $n=10000$, $3072/n<1/3$, $7488/n<1$
and $416/81>5$.
\end{proof}

\begin{lemma}\label{lem:cm-fast}
The function $h$ in \eqref{eq:cm-potential} satisfies
\begin{equation}\label{eq:cm-fast}
 L(h)>\frac{8n}{d_*}>0,\quad
 \left|\frac{S(h)}{L(h)}\right|<\frac1{10}.
\end{equation}
\end{lemma}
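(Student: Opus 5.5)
The plan is to exploit the fact that $h(t)=(d_*/t)^n$ is sharply concentrated at the shortest squared distance $d_*=d_A(a_*)=13/9$, and that $d_B,d_C,d_D$ at $a_*=5/18$ all exceed $d_*$ by a fixed margin. I would first record the exact derivatives
\[
 h'(t)=-\frac{n}{t}\Bigl(\frac{d_*}{t}\Bigr)^n,\qquad
 h''(t)=\frac{n(n+1)}{t^2}\Bigl(\frac{d_*}{t}\Bigr)^n .
\]
In particular $h'(d_*)=-n/d_*$. Then I would evaluate the four squared distances at $a_*$:
\[
 d_A=\tfrac{13}{9},\qquad d_B=\tfrac{26}{9},\qquad
 d_C=\tfrac{23}{9}-\tfrac{13\sqrt2}{18},\qquad
 d_D=\tfrac{23}{9}+\tfrac{13\sqrt2}{18}.
\]
Using the rational bound $\sqrt2<1.4143$, one gets $d_C>3/2$. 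Hence each of $d_B,d_C,d_D$ exceeds $3/2$, and for every $t\in\{d_B,d_C,d_D\}$ we have $d_*/t\le\rho:=26/27$.

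For the first inequality, I split $L(h)$ from \eqref{eq:cm-L} into the $d_A$ term and the rest. The $d_A$ term is $-16h'(d_A)=16n/d_*$, and the $d_B$ term $-16h'(d_B)$ is positive. The $d_C$ and $d_D$ terms are negative, but their absolute values are at most $8(2+\sigma)\cdot n\rho^n/(3/2)+8(2-\sigma)\cdot n\rho^n/(3/2)=\tfrac{64}{3}n\rho^n$. So
\[
 L(h)\ge \frac{16n}{d_*}-\frac{64}{3}n\rho^n .
\]
Since $\rho^n\le e^{-n/27}$ with $n=10000$, this remainder is far below $8n/d_*$. This gives $L(h)>8n/d_*>0$.

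For the ratio, the key observation is that the leading $d_A$ contributions cancel in $S=Q+L$. By \eqref{eq:cm-Q}, $Q$ contributes $+16h'(d_A)$, while $L$ contributes $-16h'(d_A)$. Moreover, $Q$ has no $h''(d_A)$ term, since the second-derivative weights sit only on $d_B,d_C,d_D$. Therefore $S(h)$ is a linear combination of $h'$ and $h''$ evaluated only at $d_B,d_C,d_D$. The coefficients are explicit and bounded:
\begin{itemize}
\item the first-derivative weights have absolute sum at most $64+64$, using $B,C,D\in[-1,1]$;
\item the second-derivative weights sum to $224a_*(1-a_*)\le 56$, as in Lemma~\ref{lem:cm-slow}.
\end{itemize}
Bounding $|h'(t)|\le \tfrac23 n\rho^n$ and $h''(t)\le \tfrac49 n(n+1)\rho^n$ on these three points then gives
\[
 |S(h)|\le K\,n^2\rho^n
\]
for an explicit modest constant $K$ (say $K=200$).

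Dividing by $L(h)>8n/d_*$ gives $|S(h)/L(h)|\le (Kd_*/8)\,n\rho^n$. Finally, $n\rho^n\le 10^4e^{-10000/27}$ is astronomically small, so the ratio is below $1/10$.

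The only delicate step is certifying $d_C>3/2$ (equivalently a rational upper bound on $\sqrt2$), together with the bookkeeping that the $d_A$ terms cancel exactly in $S$. Everything else is crude exponential domination, which is also what makes the statement easy to verify in Lean via $\rho^n\le(26/27)^{10000}$ bounded by a small rational.
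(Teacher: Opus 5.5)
Your proposal is correct and follows essentially the same route as the paper. It isolates the $d_A$ contribution $-16h'(d_*)=16n/d_*$ in $L(h)$, uses the exact cancellation of the $d_A$ terms in $S=Q+L$ (with no $h''(d_*)$ term in $Q$), and bounds everything at $d_B,d_C,d_D$ by the exponential smallness of $(d_*/t)^n$. The differences are cosmetic: you use the ratio $26/27$ from $d_C>3/2$ and the bound $e^{-n/27}$ with cruder coefficient sums, where the paper uses $19/20$, Bernoulli's inequality, and the sums $48$, $96$, $56$.
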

\begin{proof}
At $a=a_*$, the squared distances are
\[
 d_A=d_*,\quad d_B=2d_*,\quad
 d_C=\frac{46-13\sqrt2}{18},\quad d_D=\frac{46+13\sqrt2}{18}.
\]
Since $\sqrt2<10/7$, we have the bounds
$
 d_C-d_*>\frac5{63}
 $
 and
$
 \frac{d_*}{d_C}<\frac{91}{96}<\frac{19}{20}.
$
The ratios $d_*/d_B$ and $d_*/d_D$ are also less than $19/20$.
Set $\eta=(19/20)^n$. An application of Bernoulli's inequality gives
\[
 (20/19)^{20}\geq1+20/19>2,
 \quad \eta<2^{-500}\leq2^{-20}<10^{-6}.
\]
The derivative formulas
\[
 h'(t)=-\frac nt\left(\frac{d_*}{t}\right)^n,\quad
 h''(t)=\frac{n(n+1)}{t^2}\left(\frac{d_*}{t}\right)^n
\]
therefore imply that, for $\alpha\in\{B,C,D\}$, we have
\begin{equation}\label{eq:cm-tail}
    |h'(d_\alpha)|\leq\frac n{d_*}\eta,\qquad
 |h''(d_\alpha)|\leq\frac{n(n+1)}{d_*^2}\eta.
\end{equation}
Since $d_A(a_*)=d_*$, the contribution involving $d_A$ to $L(h)$ is
\[
 -16h'(d_*)=\frac{16n}{d_*}
\] and the other coefficients
have absolute sum $48$. Hence,
\[
 L(h)\geq\frac n{d_*}(16-48\eta)>\frac{8n}{d_*}>0.
\]
In $S(h)=Q(h)+L(h)$, the contributions $16h'(d_*)$ and $-16h'(d_*)$ cancel. Moreover, $Q(h)$ contains no term
involving $h''(d_*)$ and $Q$ has no
second-derivative term at $d_A$. The remaining first-derivative
coefficients in $S$ have absolute sum at most $96$, while the
second-derivative coefficients have sum at most $56$. Using
\eqref{eq:cm-tail}, we obtain
\[
 |S(h)|\leq\left(\frac{96n}{d_*}
              +\frac{56n(n+1)}{d_*^2}\right)\eta.
\]
Dividing by the lower bound for $L(h)$ and using $d_*>1$ gives
\[
 \left|\frac{S(h)}{L(h)}\right|
 <\left(12+\frac{7(n+1)}{d_*}\right)\eta
 <\frac{70019}{1000000}<\frac1{10}.
\]
\end{proof}

\subsection{Proof of the counterexample} We may now complete the proof of  Proposition~\ref{prop:cm-counterexample} and Theorem~\ref{thm:CM}.
\begin{proof}[Proof of Proposition~\ref{prop:cm-counterexample} and Theorem~\ref{thm:CM}]
Lemmas~\ref{lem:cm-slow} and~\ref{lem:cm-fast} give
$L(g)<0<L(h)$, so $\lambda=-L(g)/L(h)>0$.
The function $f$ is smooth on $(0,\infty)$, and induction yields
\begin{equation}\label{eq:cm-derivatives}
 (-1)^k f^{(k)}(t)
 =\frac{k!}{(n+t)^{k+1}}
  +\lambda d_*^n\frac{\prod_{j=0}^{k-1}(n+j)}{t^{n+k}}>0,
\end{equation}
for $t>0$ and $k\geq0$.
Thus, $f$ is strictly completely monotonic and for $0<t<d_*$ we have
$f(t)\geq\lambda(d_*/t)^n\geq\lambda d_*/t$.

By construction, $G_f'(a_*)=L(g)+\lambda L(h)=0$.
Since $f''>0$, equation~\eqref{eq:cm-convexity} implies that $G_f$
is strictly convex on $(0,1)$ and therefore
\begin{equation}\label{eq:cm-family-minimum}
 G_f(a_*)\leq G_f(a),\quad 0<a<1,
\end{equation}
with equality only at $a=a_*$.
On the other hand, linearity gives
\[
 Q(f)=Q(g)-L(g)\frac{Q(h)}{L(h)}
     =S(g)-L(g)\frac{S(h)}{L(h)}.
\]
By Lemmas~\ref{lem:cm-slow} and~\ref{lem:cm-fast}, we have
\[
 n^3S(g)<-4,\quad
 |n^3L(g)|<22,\quad
 \left|\frac{S(h)}{L(h)}\right|<\frac1{10},
\]
which in turn implies that $n^3Q(f) < 0$.
The function $H(u)=E_f(Y_{a_*}(u))$ is smooth near zero, since all
squared distances there are positive. Lemma~\ref{lem:cm-variation}
and Taylor's formula give for sufficiently small $u \neq 0$ the inequality
\[
 H(u)-H(0)=2Q(f)u^2+o(u^2)<0.
\]
We now fix such a $u$ and put $Y=Y_{a_*}(u)$. Its vertices are distinct unit
vectors, and \eqref{eq:cm-family-minimum} gives
\[
 E_f(Y)<G_f(a_*)\leq G_f(a)=E_f(X(a)),\quad 0<a<1.
\]
Congruence preserves all pairwise distances up to relabeling, and
hence preserves the energy.
\end{proof}

\section{Lean formalization}\label{sec:lean-formalization}

In this section, we describe the Lean formalization of four of our results, namely global optimality of the square antiprism for the logarithmic and Coulomb energies, global optimality for all sufficiently large Riesz exponents, and the counterexample for completely monotonic potentials. The formalization is done in Lean~4~\cite{lean4} on the basis of its mathematical library Mathlib~\cite{mathlib}. The purpose of this section is to explain the correspondence between the mathematical statements and their Lean
formulations.

We first present the common definitions and then discuss each of the four statements in a separate subsection. All displayed declarations are taken from the file \lean{\PYG{n}{Showcase}\PYG{n+nb+bp}{.}\PYG{n}{lean}} in the
companion repository which is available at
\begin{center}   \url{https://github.com/lukasliehr/Energy-Minimization-8-Points}.
\end{center}

\subsection{Configurations and energies}\label{subsec:lean-configurations}

Let $\Sph=\{x\in\R^3:\norm{x}=1\}$, where the norm is the Euclidean norm. We represent a configuration on the sphere by an ordered family of eight distinct
points of $\Sph$. In Lean, \lean{\PYG{n}{Fin}\PYG{+w}{ }\PYG{l+m+mi}{8}} is the set of indices $0,\ldots,7$. Hence, a function of the form \lean{\PYG{n}{Y}\PYG{+w}{ }\PYG{o}{:}\PYG{+w}{ }\PYG{n}{Fin}\PYG{+w}{ }\PYG{l+m+mi}{8}\PYG{+w}{ }\PYG{n+nb+bp}{→}\PYG{+w}{ }\PYG{n}{Point}} represents a family of eight points on the sphere. Notice that each point of the sphere belongs to the ambient space $\R^3$. We abbreviate each element of the ambient space by the Lean declaration \lean{\PYG{n}{Point}} and define
\begin{leancode}
\PYG{+w}{    }\PYG{k+kn}{abbrev}\PYG{+w}{ }\PYG{n}{Point}\PYG{+w}{ }\PYG{o}{:=}\PYG{+w}{ }\PYG{n}{EuclideanSpace}\PYG{+w}{ }\PYG{n}{ℝ}\PYG{+w}{ }\PYG{o}{(}\PYG{n}{Fin}\PYG{+w}{ }\PYG{l+m+mi}{3}\PYG{o}{)}
\end{leancode}
The type \lean{\PYG{n}{EuclideanSpace}\PYG{+w}{ }\PYG{n}{ℝ}\PYG{+w}{ }\PYG{o}{(}\PYG{n}{Fin}\PYG{+w}{ }\PYG{l+m+mi}{3}\PYG{o}{)}} is $\R^3$ with its Euclidean norm. A configuration on the sphere is then defined by a sequence of eight distinct points, each with norm one. In Lean, this requirement can be defined as follows
\begin{leancode}
\PYG{+w}{    }\PYG{k+kn}{def}\PYG{+w}{ }\PYG{n}{IsConfiguration}\PYG{+w}{ }\PYG{o}{(}\PYG{n}{Y}\PYG{+w}{ }\PYG{o}{:}\PYG{+w}{ }\PYG{n}{Fin}\PYG{+w}{ }\PYG{l+m+mi}{8}\PYG{+w}{ }\PYG{n+nb+bp}{→}\PYG{+w}{ }\PYG{n}{Point}\PYG{o}{)}\PYG{+w}{ }\PYG{o}{:}\PYG{+w}{ }\PYG{k+kt}{Prop}\PYG{+w}{ }\PYG{o}{:=}
\PYG{+w}{      }\PYG{o}{(}\PYG{n+nb+bp}{∀}\PYG{+w}{ }\PYG{n}{i}\PYG{o}{,}\PYG{+w}{ }\PYG{n+nb+bp}{‖}\PYG{n}{Y}\PYG{+w}{ }\PYG{n}{i}\PYG{n+nb+bp}{‖}\PYG{+w}{ }\PYG{n+nb+bp}{=}\PYG{+w}{ }\PYG{l+m+mi}{1}\PYG{o}{)}\PYG{+w}{ }\PYG{n+nb+bp}{∧}\PYG{+w}{ }\PYG{n}{Function}\PYG{n+nb+bp}{.}\PYG{n}{Injective}\PYG{+w}{ }\PYG{n}{Y}
\end{leancode}
In the latter definition, the symbol \lean{\PYG{n+nb+bp}{∧}} denotes logical conjunction. Thus, the first condition in \lean{\PYG{n}{IsConfiguration}\PYG{+w}{ }\PYG{n}{Y}} places every point on the unit sphere via \lean{\PYG{o}{(}\PYG{n+nb+bp}{∀}\PYG{+w}{ }\PYG{n}{i}\PYG{o}{,}\PYG{+w}{ }\PYG{n+nb+bp}{‖}\PYG{n}{Y}\PYG{+w}{ }\PYG{n}{i}\PYG{n+nb+bp}{‖}\PYG{+w}{ }\PYG{n+nb+bp}{=}\PYG{+w}{ }\PYG{l+m+mi}{1}\PYG{o}{)}}, and \lean{\PYG{n}{Function}\PYG{n+nb+bp}{.}\PYG{n}{Injective}\PYG{+w}{ }\PYG{n}{Y}} requires distinct indices to give distinct points. The type \lean{\PYG{k+kt}{Prop}} indicates that the definition expresses a mathematical assertion.

We next define the Riesz energies. We recall that for $s\geq0$, the Riesz $s$-energy is defined by
\begin{equation}\label{eq:lean-energy}
 E_s(Y)=\sum_{0\leq i<j\leq7}
 \begin{cases}
  -\log\norm{y_i-y_j},&s=0,\\
  \norm{y_i-y_j}^{-s},&s>0.
 \end{cases}
\end{equation}
Its corresponding Lean definition is given by
\begin{leancode}
\PYG{+w}{    }\PYG{k+kn}{def}\PYG{+w}{ }\PYG{n}{E}\PYG{+w}{ }\PYG{o}{(}\PYG{n}{s}\PYG{+w}{ }\PYG{o}{:}\PYG{+w}{ }\PYG{n}{ℝ}\PYG{o}{)}\PYG{+w}{ }\PYG{o}{(}\PYG{n}{Y}\PYG{+w}{ }\PYG{o}{:}\PYG{+w}{ }\PYG{n}{Fin}\PYG{+w}{ }\PYG{l+m+mi}{8}\PYG{+w}{ }\PYG{n+nb+bp}{→}\PYG{+w}{ }\PYG{n}{Point}\PYG{o}{)}\PYG{+w}{ }\PYG{o}{:}\PYG{+w}{ }\PYG{n}{ℝ}\PYG{+w}{ }\PYG{o}{:=}
\PYG{+w}{      }\PYG{n+nb+bp}{∑}\PYG{+w}{ }\PYG{n}{i}\PYG{+w}{ }\PYG{o}{:}\PYG{+w}{ }\PYG{n}{Fin}\PYG{+w}{ }\PYG{l+m+mi}{8}\PYG{o}{,}\PYG{+w}{ }\PYG{n+nb+bp}{∑}\PYG{+w}{ }\PYG{n}{j}\PYG{+w}{ }\PYG{o}{:}\PYG{+w}{ }\PYG{n}{Fin}\PYG{+w}{ }\PYG{l+m+mi}{8}\PYG{+w}{ }\PYG{k}{with}\PYG{+w}{ }\PYG{n}{i}\PYG{+w}{ }\PYG{n+nb+bp}{\PYGZlt{}}\PYG{+w}{ }\PYG{n}{j}\PYG{o}{,}
\PYG{+w}{        }\PYG{k}{if}\PYG{+w}{ }\PYG{n}{s}\PYG{+w}{ }\PYG{n+nb+bp}{=}\PYG{+w}{ }\PYG{l+m+mi}{0}\PYG{+w}{ }\PYG{k}{then}\PYG{+w}{ }\PYG{n+nb+bp}{\PYGZhy{}}\PYG{n}{log}\PYG{+w}{ }\PYG{n+nb+bp}{‖}\PYG{n}{Y}\PYG{+w}{ }\PYG{n}{i}\PYG{+w}{ }\PYG{n+nb+bp}{\PYGZhy{}}\PYG{+w}{ }\PYG{n}{Y}\PYG{+w}{ }\PYG{n}{j}\PYG{n+nb+bp}{‖}\PYG{+w}{ }\PYG{k}{else}\PYG{+w}{ }\PYG{n+nb+bp}{‖}\PYG{n}{Y}\PYG{+w}{ }\PYG{n}{i}\PYG{+w}{ }\PYG{n+nb+bp}{\PYGZhy{}}\PYG{+w}{ }\PYG{n}{Y}\PYG{+w}{ }\PYG{n}{j}\PYG{n+nb+bp}{‖}\PYG{+w}{ }\PYG{n+nb+bp}{\PYGZca{}}\PYG{+w}{ }\PYG{o}{(}\PYG{n+nb+bp}{\PYGZhy{}}\PYG{n}{s}\PYG{o}{)}
\end{leancode}
Here, \lean{\PYG{n+nb+bp}{∑}\PYG{+w}{ }\PYG{n}{j}\PYG{+w}{ }\PYG{o}{:}\PYG{+w}{ }\PYG{n}{Fin}\PYG{+w}{ }\PYG{l+m+mi}{8}\PYG{+w}{ }\PYG{k}{with}\PYG{+w}{ }\PYG{n}{i}\PYG{+w}{ }\PYG{n+nb+bp}{\PYGZlt{}}\PYG{+w}{ }\PYG{n}{j}} restricts the inner sum to $j>i$.
Each of the $28$ unordered pairs is therefore counted once.
The Riesz energy $E_s(Y)$ in Lean is therefore given by \lean{\PYG{n}{E}\PYG{+w}{ }\PYG{n}{s}\PYG{+w}{ }\PYG{n}{Y}}.

\subsection{Congruence and uniqueness}\label{subsec:lean-congruence}

Two configurations are congruent if an orthogonal transformation followed
by a relabeling carries one to the other. This includes reflections. The notation of congruent points can be defined in Lean as follows.
\begin{leancode}
\PYG{+w}{    }\PYG{k+kn}{def}\PYG{+w}{ }\PYG{n}{Congruent}\PYG{+w}{ }\PYG{o}{(}\PYG{n}{Y}\PYG{+w}{ }\PYG{n}{Z}\PYG{+w}{ }\PYG{o}{:}\PYG{+w}{ }\PYG{n}{Fin}\PYG{+w}{ }\PYG{l+m+mi}{8}\PYG{+w}{ }\PYG{n+nb+bp}{→}\PYG{+w}{ }\PYG{n}{Point}\PYG{o}{)}\PYG{+w}{ }\PYG{o}{:}\PYG{+w}{ }\PYG{k+kt}{Prop}\PYG{+w}{ }\PYG{o}{:=}
\PYG{+w}{      }\PYG{n+nb+bp}{∃}\PYG{+w}{ }\PYG{o}{(}\PYG{n}{U}\PYG{+w}{ }\PYG{o}{:}\PYG{+w}{ }\PYG{n}{Point}\PYG{+w}{ }\PYG{n+nb+bp}{≃ₗᵢ}\PYG{o}{[}\PYG{n}{ℝ}\PYG{o}{]}\PYG{+w}{ }\PYG{n}{Point}\PYG{o}{)}\PYG{+w}{ }\PYG{o}{(}\PYG{n}{σ}\PYG{+w}{ }\PYG{o}{:}\PYG{+w}{ }\PYG{n}{Equiv}\PYG{n+nb+bp}{.}\PYG{n}{Perm}\PYG{+w}{ }\PYG{o}{(}\PYG{n}{Fin}\PYG{+w}{ }\PYG{l+m+mi}{8}\PYG{o}{)),}
\PYG{+w}{      }\PYG{n+nb+bp}{∀}\PYG{+w}{ }\PYG{n}{i}\PYG{o}{,}\PYG{+w}{ }\PYG{n}{Y}\PYG{+w}{ }\PYG{n}{i}\PYG{+w}{ }\PYG{n+nb+bp}{=}\PYG{+w}{ }\PYG{n}{U}\PYG{+w}{ }\PYG{o}{(}\PYG{n}{Z}\PYG{+w}{ }\PYG{o}{(}\PYG{n}{σ}\PYG{+w}{ }\PYG{n}{i}\PYG{o}{))}
\end{leancode}
The type \lean{\PYG{n}{Point}\PYG{+w}{ }\PYG{n+nb+bp}{≃ₗᵢ}\PYG{o}{[}\PYG{n}{ℝ}\PYG{o}{]}\PYG{+w}{ }\PYG{n}{Point}} consists of real linear isometric isomorphisms of the ambient Euclidean space, hence orthogonal transformations. The type \lean{\PYG{n}{Equiv}\PYG{n+nb+bp}{.}\PYG{n}{Perm}\PYG{+w}{ }\PYG{o}{(}\PYG{n}{Fin}\PYG{+w}{ }\PYG{l+m+mi}{8}\PYG{o}{)}} consists of permutations of the eight labels. Accordingly, the definition asserts that there exist an orthogonal transformation $U$ and a permutation $\sigma$ such that $y_i=U z_{\sigma(i)}$ for every $i$.

We are now ready to define in Lean the notation of being a unique minimizer. This definition combines global minimality and uniqueness up to congruence and reads as follows
\begin{leancode}
\PYG{+w}{    }\PYG{k+kn}{def}\PYG{+w}{ }\PYG{n}{IsUniqueMinimizer}\PYG{+w}{ }\PYG{o}{(}\PYG{n}{s}\PYG{+w}{ }\PYG{o}{:}\PYG{+w}{ }\PYG{n}{ℝ}\PYG{o}{)}\PYG{+w}{ }\PYG{o}{(}\PYG{n}{Z}\PYG{+w}{ }\PYG{o}{:}\PYG{+w}{ }\PYG{n}{Fin}\PYG{+w}{ }\PYG{l+m+mi}{8}\PYG{+w}{ }\PYG{n+nb+bp}{→}\PYG{+w}{ }\PYG{n}{Point}\PYG{o}{)}\PYG{+w}{ }\PYG{o}{:}\PYG{+w}{ }\PYG{k+kt}{Prop}\PYG{+w}{ }\PYG{o}{:=}
\PYG{+w}{      }\PYG{n}{IsConfiguration}\PYG{+w}{ }\PYG{n}{Z}\PYG{+w}{ }\PYG{n+nb+bp}{∧}\PYG{+w}{ }\PYG{n+nb+bp}{∀}\PYG{+w}{ }\PYG{n}{Y}\PYG{o}{,}\PYG{+w}{ }\PYG{n}{IsConfiguration}\PYG{+w}{ }\PYG{n}{Y}\PYG{+w}{ }\PYG{n+nb+bp}{→}
\PYG{+w}{        }\PYG{n}{E}\PYG{+w}{ }\PYG{n}{s}\PYG{+w}{ }\PYG{n}{Z}\PYG{+w}{ }\PYG{n+nb+bp}{≤}\PYG{+w}{ }\PYG{n}{E}\PYG{+w}{ }\PYG{n}{s}\PYG{+w}{ }\PYG{n}{Y}\PYG{+w}{ }\PYG{n+nb+bp}{∧}\PYG{+w}{ }\PYG{o}{(}\PYG{n}{E}\PYG{+w}{ }\PYG{n}{s}\PYG{+w}{ }\PYG{n}{Y}\PYG{+w}{ }\PYG{n+nb+bp}{=}\PYG{+w}{ }\PYG{n}{E}\PYG{+w}{ }\PYG{n}{s}\PYG{+w}{ }\PYG{n}{Z}\PYG{+w}{ }\PYG{n+nb+bp}{↔}\PYG{+w}{ }\PYG{n}{Congruent}\PYG{+w}{ }\PYG{n}{Y}\PYG{+w}{ }\PYG{n}{Z}\PYG{o}{)}
\end{leancode}
The first clause requires $Z$ itself to be a configuration. The remaining
clauses say that for every admissible $Y$, one has
$$
 E_s(Z)\leq E_s(Y),\qquad
 E_s(Y)=E_s(Z)\ \Longleftrightarrow\ Y\text{ is congruent to }Z.
$$
Here \lean{\PYG{n+nb+bp}{∀}\PYG{+w}{ }\PYG{n}{Y}\PYG{o}{,}\PYG{+w}{ }\PYG{n}{IsConfiguration}\PYG{+w}{ }\PYG{n}{Y}\PYG{+w}{ }\PYG{n+nb+bp}{→}} means ``for every $Y$ satisfying
\lean{\PYG{n}{IsConfiguration}\PYG{+w}{ }\PYG{n}{Y}}'', and \lean{\PYG{n+nb+bp}{↔}} denotes equivalence.

\subsection{The square antiprism and its parameter}\label{subsec:lean-antiprism}

For $0<a<1$, let $r=\sqrt{1-a}$ and $h=\sqrt a$. The square antiprism
$X(a)$ consists of one square at height $h$ and another at height $-h$,
rotated by $\pi/4$. We use the same vertex order as in the paper and define the square antiprism with parameter $a$ via
\begin{leancode}
\PYG{+w}{    }\PYG{k+kn}{def}\PYG{+w}{ }\PYG{n}{X}\PYG{+w}{ }\PYG{o}{(}\PYG{n}{a}\PYG{+w}{ }\PYG{o}{:}\PYG{+w}{ }\PYG{n}{ℝ}\PYG{o}{)}\PYG{+w}{ }\PYG{o}{:}\PYG{+w}{ }\PYG{n}{Fin}\PYG{+w}{ }\PYG{l+m+mi}{8}\PYG{+w}{ }\PYG{n+nb+bp}{→}\PYG{+w}{ }\PYG{n}{Point}\PYG{+w}{ }\PYG{o}{:=}
\PYG{+w}{      }\PYG{k}{let}\PYG{+w}{ }\PYG{n}{r}\PYG{+w}{ }\PYG{o}{:=}\PYG{+w}{ }\PYG{n+nb+bp}{√}\PYG{o}{(}\PYG{l+m+mi}{1}\PYG{+w}{ }\PYG{n+nb+bp}{\PYGZhy{}}\PYG{+w}{ }\PYG{n}{a}\PYG{o}{)}
\PYG{+w}{      }\PYG{k}{let}\PYG{+w}{ }\PYG{n}{h}\PYG{+w}{ }\PYG{o}{:=}\PYG{+w}{ }\PYG{n+nb+bp}{√}\PYG{n}{a}
\PYG{+w}{      }\PYG{k}{let}\PYG{+w}{ }\PYG{n}{d}\PYG{+w}{ }\PYG{o}{:=}\PYG{+w}{ }\PYG{n}{r}\PYG{+w}{ }\PYG{n+nb+bp}{/}\PYG{+w}{ }\PYG{n+nb+bp}{√}\PYG{l+m+mi}{2}
\PYG{+w}{      }\PYG{n+nb+bp}{!}\PYG{o}{[}\PYG{n+nb+bp}{!₂}\PYG{o}{[}\PYG{n}{r}\PYG{o}{,}\PYG{+w}{ }\PYG{l+m+mi}{0}\PYG{o}{,}\PYG{+w}{ }\PYG{n}{h}\PYG{o}{],}\PYG{+w}{ }\PYG{n+nb+bp}{!₂}\PYG{o}{[}\PYG{l+m+mi}{0}\PYG{o}{,}\PYG{+w}{ }\PYG{n}{r}\PYG{o}{,}\PYG{+w}{ }\PYG{n}{h}\PYG{o}{],}\PYG{+w}{ }\PYG{n+nb+bp}{!₂}\PYG{o}{[}\PYG{n+nb+bp}{\PYGZhy{}}\PYG{n}{r}\PYG{o}{,}\PYG{+w}{ }\PYG{l+m+mi}{0}\PYG{o}{,}\PYG{+w}{ }\PYG{n}{h}\PYG{o}{],}\PYG{+w}{ }\PYG{n+nb+bp}{!₂}\PYG{o}{[}\PYG{l+m+mi}{0}\PYG{o}{,}\PYG{+w}{ }\PYG{n+nb+bp}{\PYGZhy{}}\PYG{n}{r}\PYG{o}{,}\PYG{+w}{ }\PYG{n}{h}\PYG{o}{],}
\PYG{+w}{        }\PYG{n+nb+bp}{!₂}\PYG{o}{[}\PYG{n}{d}\PYG{o}{,}\PYG{+w}{ }\PYG{n}{d}\PYG{o}{,}\PYG{+w}{ }\PYG{n+nb+bp}{\PYGZhy{}}\PYG{n}{h}\PYG{o}{],}\PYG{+w}{ }\PYG{n+nb+bp}{!₂}\PYG{o}{[}\PYG{n+nb+bp}{\PYGZhy{}}\PYG{n}{d}\PYG{o}{,}\PYG{+w}{ }\PYG{n}{d}\PYG{o}{,}\PYG{+w}{ }\PYG{n+nb+bp}{\PYGZhy{}}\PYG{n}{h}\PYG{o}{],}\PYG{+w}{ }\PYG{n+nb+bp}{!₂}\PYG{o}{[}\PYG{n+nb+bp}{\PYGZhy{}}\PYG{n}{d}\PYG{o}{,}\PYG{+w}{ }\PYG{n+nb+bp}{\PYGZhy{}}\PYG{n}{d}\PYG{o}{,}\PYG{+w}{ }\PYG{n+nb+bp}{\PYGZhy{}}\PYG{n}{h}\PYG{o}{],}\PYG{+w}{ }\PYG{n+nb+bp}{!₂}\PYG{o}{[}\PYG{n}{d}\PYG{o}{,}\PYG{+w}{ }\PYG{n+nb+bp}{\PYGZhy{}}\PYG{n}{d}\PYG{o}{,}\PYG{+w}{ }\PYG{n+nb+bp}{\PYGZhy{}}\PYG{n}{h}\PYG{o}{]]}
\end{leancode}
For a fixed $a$, the function \lean{\PYG{n}{X}\PYG{+w}{ }\PYG{n}{a}} assigns a point in $\R^3$
to each index $0,\ldots,7$. The notation \lean{\PYG{n+nb+bp}{!₂}\PYG{o}{[}\PYG{n}{x}\PYG{o}{,}\PYG{+w}{ }\PYG{n}{y}\PYG{o}{,}\PYG{+w}{ }\PYG{n}{z}\PYG{o}{]}} specifies
one point with coordinates $(x,y,z)$, while the outer expression
\lean{\PYG{n+nb+bp}{!}\PYG{o}{[}\PYG{n+nb+bp}{...}\PYG{o}{]}} lists the eight points in their index order.
For example, \lean{\PYG{n}{X}\PYG{+w}{ }\PYG{n}{a}\PYG{+w}{ }\PYG{l+m+mi}{0}} is $(r,0,h)$ and \lean{\PYG{n}{X}\PYG{+w}{ }\PYG{n}{a}\PYG{+w}{ }\PYG{l+m+mi}{4}} is
$(d,d,-h)$. The subscript in \lean{\PYG{n+nb+bp}{!₂}} refers to the Euclidean
norm, not to the dimension.

The three lines beginning with \lean{\PYG{k}{let}} introduce the abbreviations
$r=\sqrt{1-a}$, $h=\sqrt a$, and $d=r/\sqrt2$.
The first four points form a square in the plane $z=h$; the last
four form a square in the plane $z=-h$, rotated by $\pi/4$ relative
to the first. Both squares have circumradius $r$, and all eight
points lie on the unit sphere because $r^2+h^2=1$.
The distance between the two planes is $2h$, so $a=h^2$ is the
\emph{squared half-height}. We use $0<a<1$ to ensure that both
$h$ and $r$ are positive. At $a=0$, the eight points form a regular
octagon in the equatorial plane. At $a=1$, the four upper vertices
coincide at the north pole and the four lower vertices coincide
at the south pole.

The four squared distances and their multiplicities are
$d_A,d_B,d_C,d_D$ and $8,4,8,8$, respectively. Their definitions are
\begin{leancode}
\PYG{+w}{    }\PYG{k+kn}{def}\PYG{+w}{ }\PYG{n}{dA}\PYG{+w}{ }\PYG{o}{(}\PYG{n}{a}\PYG{+w}{ }\PYG{o}{:}\PYG{+w}{ }\PYG{n}{ℝ}\PYG{o}{)}\PYG{+w}{ }\PYG{o}{:}\PYG{+w}{ }\PYG{n}{ℝ}\PYG{+w}{ }\PYG{o}{:=}\PYG{+w}{ }\PYG{l+m+mi}{2}\PYG{+w}{ }\PYG{n+nb+bp}{*}\PYG{+w}{ }\PYG{o}{(}\PYG{l+m+mi}{1}\PYG{+w}{ }\PYG{n+nb+bp}{\PYGZhy{}}\PYG{+w}{ }\PYG{n}{a}\PYG{o}{)}
\PYG{+w}{    }\PYG{k+kn}{def}\PYG{+w}{ }\PYG{n}{dB}\PYG{+w}{ }\PYG{o}{(}\PYG{n}{a}\PYG{+w}{ }\PYG{o}{:}\PYG{+w}{ }\PYG{n}{ℝ}\PYG{o}{)}\PYG{+w}{ }\PYG{o}{:}\PYG{+w}{ }\PYG{n}{ℝ}\PYG{+w}{ }\PYG{o}{:=}\PYG{+w}{ }\PYG{l+m+mi}{4}\PYG{+w}{ }\PYG{n+nb+bp}{*}\PYG{+w}{ }\PYG{o}{(}\PYG{l+m+mi}{1}\PYG{+w}{ }\PYG{n+nb+bp}{\PYGZhy{}}\PYG{+w}{ }\PYG{n}{a}\PYG{o}{)}
\PYG{+w}{    }\PYG{k+kn}{def}\PYG{+w}{ }\PYG{n}{dC}\PYG{+w}{ }\PYG{o}{(}\PYG{n}{a}\PYG{+w}{ }\PYG{o}{:}\PYG{+w}{ }\PYG{n}{ℝ}\PYG{o}{)}\PYG{+w}{ }\PYG{o}{:}\PYG{+w}{ }\PYG{n}{ℝ}\PYG{+w}{ }\PYG{o}{:=}\PYG{+w}{ }\PYG{l+m+mi}{2}\PYG{+w}{ }\PYG{n+nb+bp}{\PYGZhy{}}\PYG{+w}{ }\PYG{n+nb+bp}{√}\PYG{l+m+mi}{2}\PYG{+w}{ }\PYG{n+nb+bp}{+}\PYG{+w}{ }\PYG{o}{(}\PYG{l+m+mi}{2}\PYG{+w}{ }\PYG{n+nb+bp}{+}\PYG{+w}{ }\PYG{n+nb+bp}{√}\PYG{l+m+mi}{2}\PYG{o}{)}\PYG{+w}{ }\PYG{n+nb+bp}{*}\PYG{+w}{ }\PYG{n}{a}
\PYG{+w}{    }\PYG{k+kn}{def}\PYG{+w}{ }\PYG{n}{dD}\PYG{+w}{ }\PYG{o}{(}\PYG{n}{a}\PYG{+w}{ }\PYG{o}{:}\PYG{+w}{ }\PYG{n}{ℝ}\PYG{o}{)}\PYG{+w}{ }\PYG{o}{:}\PYG{+w}{ }\PYG{n}{ℝ}\PYG{+w}{ }\PYG{o}{:=}\PYG{+w}{ }\PYG{l+m+mi}{2}\PYG{+w}{ }\PYG{n+nb+bp}{+}\PYG{+w}{ }\PYG{n+nb+bp}{√}\PYG{l+m+mi}{2}\PYG{+w}{ }\PYG{n+nb+bp}{+}\PYG{+w}{ }\PYG{o}{(}\PYG{l+m+mi}{2}\PYG{+w}{ }\PYG{n+nb+bp}{\PYGZhy{}}\PYG{+w}{ }\PYG{n+nb+bp}{√}\PYG{l+m+mi}{2}\PYG{o}{)}\PYG{+w}{ }\PYG{n+nb+bp}{*}\PYG{+w}{ }\PYG{n}{a}
\end{leancode}
For $q_s=1+s/2$, the stationarity function is
\[
 F(s,a)=16d_A(a)^{-q_s}+16d_B(a)^{-q_s}
       -8(2+\sqrt2)d_C(a)^{-q_s}
       -8(2-\sqrt2)d_D(a)^{-q_s}.
\]
In Lean, this is written as
\begin{leancode}
\PYG{+w}{    }\PYG{k+kn}{def}\PYG{+w}{ }\PYG{n}{F}\PYG{+w}{ }\PYG{o}{(}\PYG{n}{s}\PYG{+w}{ }\PYG{n}{a}\PYG{+w}{ }\PYG{o}{:}\PYG{+w}{ }\PYG{n}{ℝ}\PYG{o}{)}\PYG{+w}{ }\PYG{o}{:}\PYG{+w}{ }\PYG{n}{ℝ}\PYG{+w}{ }\PYG{o}{:=}
\PYG{+w}{      }\PYG{k}{let}\PYG{+w}{ }\PYG{n}{q}\PYG{+w}{ }\PYG{o}{:=}\PYG{+w}{ }\PYG{l+m+mi}{1}\PYG{+w}{ }\PYG{n+nb+bp}{+}\PYG{+w}{ }\PYG{n}{s}\PYG{+w}{ }\PYG{n+nb+bp}{/}\PYG{+w}{ }\PYG{l+m+mi}{2}
\PYG{+w}{      }\PYG{l+m+mi}{16}\PYG{+w}{ }\PYG{n+nb+bp}{*}\PYG{+w}{ }\PYG{n}{dA}\PYG{+w}{ }\PYG{n}{a}\PYG{+w}{ }\PYG{n+nb+bp}{\PYGZca{}}\PYG{+w}{ }\PYG{o}{(}\PYG{n+nb+bp}{\PYGZhy{}}\PYG{n}{q}\PYG{o}{)}\PYG{+w}{ }\PYG{n+nb+bp}{+}\PYG{+w}{ }\PYG{l+m+mi}{16}\PYG{+w}{ }\PYG{n+nb+bp}{*}\PYG{+w}{ }\PYG{n}{dB}\PYG{+w}{ }\PYG{n}{a}\PYG{+w}{ }\PYG{n+nb+bp}{\PYGZca{}}\PYG{+w}{ }\PYG{o}{(}\PYG{n+nb+bp}{\PYGZhy{}}\PYG{n}{q}\PYG{o}{)}
\PYG{+w}{      }\PYG{n+nb+bp}{\PYGZhy{}}\PYG{+w}{ }\PYG{l+m+mi}{8}\PYG{+w}{ }\PYG{n+nb+bp}{*}\PYG{+w}{ }\PYG{o}{(}\PYG{l+m+mi}{2}\PYG{+w}{ }\PYG{n+nb+bp}{+}\PYG{+w}{ }\PYG{n+nb+bp}{√}\PYG{l+m+mi}{2}\PYG{o}{)}\PYG{+w}{ }\PYG{n+nb+bp}{*}\PYG{+w}{ }\PYG{n}{dC}\PYG{+w}{ }\PYG{n}{a}\PYG{+w}{ }\PYG{n+nb+bp}{\PYGZca{}}\PYG{+w}{ }\PYG{o}{(}\PYG{n+nb+bp}{\PYGZhy{}}\PYG{n}{q}\PYG{o}{)}\PYG{+w}{ }\PYG{n+nb+bp}{\PYGZhy{}}\PYG{+w}{ }\PYG{l+m+mi}{8}\PYG{+w}{ }\PYG{n+nb+bp}{*}\PYG{+w}{ }\PYG{o}{(}\PYG{l+m+mi}{2}\PYG{+w}{ }\PYG{n+nb+bp}{\PYGZhy{}}\PYG{+w}{ }\PYG{n+nb+bp}{√}\PYG{l+m+mi}{2}\PYG{o}{)}\PYG{+w}{ }\PYG{n+nb+bp}{*}\PYG{+w}{ }\PYG{n}{dD}\PYG{+w}{ }\PYG{n}{a}\PYG{+w}{ }\PYG{n+nb+bp}{\PYGZca{}}\PYG{+w}{ }\PYG{o}{(}\PYG{n+nb+bp}{\PYGZhy{}}\PYG{n}{q}\PYG{o}{)}
\end{leancode}

\subsection{Logarithmic energy}\label{subsec:lean-logarithmic}

In the logarithmic case, the optimal parameter is
$a_0=(2\sqrt{58}-13)/7$. The first theorem gives both uniqueness of the
minimizer and the exact minimum energy.
\begin{leancode}
\PYG{+w}{    }\PYG{k+kn}{theorem}\PYG{+w}{ }\PYG{n}{logarithmic\PYGZus{}minimum}\PYG{+w}{ }\PYG{o}{:}
\PYG{+w}{      }\PYG{k}{let}\PYG{+w}{ }\PYG{n}{a₀}\PYG{+w}{ }\PYG{o}{:=}\PYG{+w}{ }\PYG{o}{(}\PYG{l+m+mi}{2}\PYG{+w}{ }\PYG{n+nb+bp}{*}\PYG{+w}{ }\PYG{n+nb+bp}{√}\PYG{l+m+mi}{58}\PYG{+w}{ }\PYG{n+nb+bp}{\PYGZhy{}}\PYG{+w}{ }\PYG{l+m+mi}{13}\PYG{o}{)}\PYG{+w}{ }\PYG{n+nb+bp}{/}\PYG{+w}{ }\PYG{l+m+mi}{7}
\PYG{+w}{      }\PYG{n}{IsUniqueMinimizer}\PYG{+w}{ }\PYG{l+m+mi}{0}\PYG{+w}{ }\PYG{o}{(}\PYG{n}{X}\PYG{+w}{ }\PYG{n}{a₀}\PYG{o}{)}\PYG{+w}{ }\PYG{n+nb+bp}{∧}
\PYG{+w}{        }\PYG{n}{E}\PYG{+w}{ }\PYG{l+m+mi}{0}\PYG{+w}{ }\PYG{o}{(}\PYG{n}{X}\PYG{+w}{ }\PYG{n}{a₀}\PYG{o}{)}\PYG{+w}{ }\PYG{n+nb+bp}{=}\PYG{+w}{ }\PYG{n+nb+bp}{\PYGZhy{}}\PYG{l+m+mi}{12}\PYG{+w}{ }\PYG{n+nb+bp}{*}\PYG{+w}{ }\PYG{n}{log}\PYG{+w}{ }\PYG{l+m+mi}{2}\PYG{+w}{ }\PYG{n+nb+bp}{\PYGZhy{}}\PYG{+w}{ }\PYG{l+m+mi}{6}\PYG{+w}{ }\PYG{n+nb+bp}{*}\PYG{+w}{ }\PYG{n}{log}\PYG{+w}{ }\PYG{o}{(}\PYG{l+m+mi}{1}\PYG{+w}{ }\PYG{n+nb+bp}{\PYGZhy{}}\PYG{+w}{ }\PYG{n}{a₀}\PYG{o}{)}
\PYG{+w}{        }\PYG{n+nb+bp}{\PYGZhy{}}\PYG{+w}{ }\PYG{l+m+mi}{4}\PYG{+w}{ }\PYG{n+nb+bp}{*}\PYG{+w}{ }\PYG{n}{log}\PYG{+w}{ }\PYG{o}{(}\PYG{l+m+mi}{1}\PYG{+w}{ }\PYG{n+nb+bp}{+}\PYG{+w}{ }\PYG{l+m+mi}{6}\PYG{+w}{ }\PYG{n+nb+bp}{*}\PYG{+w}{ }\PYG{n}{a₀}\PYG{+w}{ }\PYG{n+nb+bp}{+}\PYG{+w}{ }\PYG{n}{a₀}\PYG{+w}{ }\PYG{n+nb+bp}{\PYGZca{}}\PYG{+w}{ }\PYG{l+m+mi}{2}\PYG{o}{)}
\end{leancode}
The expression following \lean{\PYG{k}{let}\PYG{+w}{ }\PYG{n}{a₀}\PYG{+w}{ }\PYG{o}{:=}} is the algebraic value
of $a_0$. The first clause of the theorem states that $X(a_0)$ uniquely minimizes the
logarithmic energy up to congruence. The second clause gives
\[
 E_0(X(a_0))=-12\log2-6\log(1-a_0)
             -4\log(1+6a_0+a_0^2).
\]

\subsection{Coulomb energy}\label{subsec:lean-coulomb}

For $s=1$, the minimizing parameter is specified by its stationarity equation. The global optimality of the antiprism for the Coulomb energy is stated in Lean via
\begin{leancode}
\PYG{+w}{    }\PYG{k+kn}{theorem}\PYG{+w}{ }\PYG{n}{coulomb\PYGZus{}minimum}\PYG{+w}{ }\PYG{o}{:}
\PYG{+w}{      }\PYG{n+nb+bp}{∃}\PYG{+w}{ }\PYG{n}{a₁}\PYG{+w}{ }\PYG{n+nb+bp}{∈}\PYG{+w}{ }\PYG{n}{Set}\PYG{n+nb+bp}{.}\PYG{n}{Ioo}\PYG{+w}{ }\PYG{o}{(}\PYG{l+m+mi}{0}\PYG{+w}{ }\PYG{o}{:}\PYG{+w}{ }\PYG{n}{ℝ}\PYG{o}{)}\PYG{+w}{ }\PYG{l+m+mi}{1}\PYG{o}{,}
\PYG{+w}{        }\PYG{o}{(}\PYG{n+nb+bp}{∀}\PYG{+w}{ }\PYG{n}{b}\PYG{+w}{ }\PYG{n+nb+bp}{∈}\PYG{+w}{ }\PYG{n}{Set}\PYG{n+nb+bp}{.}\PYG{n}{Ioo}\PYG{+w}{ }\PYG{o}{(}\PYG{l+m+mi}{0}\PYG{+w}{ }\PYG{o}{:}\PYG{+w}{ }\PYG{n}{ℝ}\PYG{o}{)}\PYG{+w}{ }\PYG{l+m+mi}{1}\PYG{o}{,}\PYG{+w}{ }\PYG{n}{F}\PYG{+w}{ }\PYG{l+m+mi}{1}\PYG{+w}{ }\PYG{n}{b}\PYG{+w}{ }\PYG{n+nb+bp}{=}\PYG{+w}{ }\PYG{l+m+mi}{0}\PYG{+w}{ }\PYG{n+nb+bp}{↔}\PYG{+w}{ }\PYG{n}{b}\PYG{+w}{ }\PYG{n+nb+bp}{=}\PYG{+w}{ }\PYG{n}{a₁}\PYG{o}{)}\PYG{+w}{ }\PYG{n+nb+bp}{∧}
\PYG{+w}{        }\PYG{n}{IsUniqueMinimizer}\PYG{+w}{ }\PYG{l+m+mi}{1}\PYG{+w}{ }\PYG{o}{(}\PYG{n}{X}\PYG{+w}{ }\PYG{n}{a₁}\PYG{o}{)}\PYG{+w}{ }\PYG{n+nb+bp}{∧}
\PYG{+w}{        }\PYG{n}{E}\PYG{+w}{ }\PYG{l+m+mi}{1}\PYG{+w}{ }\PYG{o}{(}\PYG{n}{X}\PYG{+w}{ }\PYG{n}{a₁}\PYG{o}{)}\PYG{+w}{ }\PYG{n+nb+bp}{=}\PYG{+w}{ }\PYG{o}{(}\PYG{l+m+mi}{4}\PYG{+w}{ }\PYG{n+nb+bp}{*}\PYG{+w}{ }\PYG{n+nb+bp}{√}\PYG{l+m+mi}{2}\PYG{+w}{ }\PYG{n+nb+bp}{+}\PYG{+w}{ }\PYG{l+m+mi}{2}\PYG{o}{)}\PYG{+w}{ }\PYG{n+nb+bp}{/}\PYG{+w}{ }\PYG{n+nb+bp}{√}\PYG{o}{(}\PYG{l+m+mi}{1}\PYG{+w}{ }\PYG{n+nb+bp}{\PYGZhy{}}\PYG{+w}{ }\PYG{n}{a₁}\PYG{o}{)}
\PYG{+w}{        }\PYG{n+nb+bp}{+}\PYG{+w}{ }\PYG{l+m+mi}{8}\PYG{+w}{ }\PYG{n+nb+bp}{/}\PYG{+w}{ }\PYG{n+nb+bp}{√}\PYG{o}{(}\PYG{n}{dC}\PYG{+w}{ }\PYG{n}{a₁}\PYG{o}{)}\PYG{+w}{ }\PYG{n+nb+bp}{+}\PYG{+w}{ }\PYG{l+m+mi}{8}\PYG{+w}{ }\PYG{n+nb+bp}{/}\PYG{+w}{ }\PYG{n+nb+bp}{√}\PYG{o}{(}\PYG{n}{dD}\PYG{+w}{ }\PYG{n}{a₁}\PYG{o}{)}
\end{leancode}
The quantifier \lean{\PYG{n+nb+bp}{∃}\PYG{+w}{ }\PYG{n}{a₁}\PYG{+w}{ }\PYG{n+nb+bp}{∈}\PYG{+w}{ }\PYG{n}{Set}\PYG{n+nb+bp}{.}\PYG{n}{Ioo}\PYG{+w}{ }\PYG{o}{(}\PYG{l+m+mi}{0}\PYG{+w}{ }\PYG{o}{:}\PYG{+w}{ }\PYG{n}{ℝ}\PYG{o}{)}\PYG{+w}{ }\PYG{l+m+mi}{1}} asserts the existence of
a real number $a_1$ in $(0,1)$. The next clause says that a point $b$ in
this interval satisfies $F(1,b)=0$ if and only if $b=a_1$. It therefore
specifies $a_1$ uniquely. The final two clauses give global optimality and uniqueness of $X(a_1)$,
together with the exact value
\[
 E_1(X(a_1))=\frac{4\sqrt2+2}{\sqrt{1-a_1}}
             +\frac8{\sqrt{d_C(a_1)}}+\frac8{\sqrt{d_D(a_1)}}.
\]

\subsection{Sufficiently large Riesz exponents}\label{subsec:lean-large}

The third theorem proves global optimality for all
sufficiently large exponents and is stated in Lean as follows
\begin{leancode}
\PYG{+w}{    }\PYG{k+kn}{theorem}\PYG{+w}{ }\PYG{n}{eventual\PYGZus{}global\PYGZus{}optimality}\PYG{+w}{ }\PYG{o}{:}
\PYG{+w}{      }\PYG{n+nb+bp}{∃}\PYG{+w}{ }\PYG{n}{a}\PYG{+w}{ }\PYG{o}{:}\PYG{+w}{ }\PYG{n}{ℝ}\PYG{+w}{ }\PYG{n+nb+bp}{→}\PYG{+w}{ }\PYG{n}{ℝ}\PYG{o}{,}
\PYG{+w}{        }\PYG{o}{(}\PYG{n+nb+bp}{∀}\PYG{+w}{ }\PYG{n}{s}\PYG{+w}{ }\PYG{n+nb+bp}{≥}\PYG{+w}{ }\PYG{o}{(}\PYG{l+m+mi}{0}\PYG{+w}{ }\PYG{o}{:}\PYG{+w}{ }\PYG{n}{ℝ}\PYG{o}{),}\PYG{+w}{ }\PYG{n}{a}\PYG{+w}{ }\PYG{n}{s}\PYG{+w}{ }\PYG{n+nb+bp}{∈}\PYG{+w}{ }\PYG{n}{Set}\PYG{n+nb+bp}{.}\PYG{n}{Ioo}\PYG{+w}{ }\PYG{l+m+mi}{0}\PYG{+w}{ }\PYG{l+m+mi}{1}\PYG{+w}{ }\PYG{n+nb+bp}{∧}
\PYG{+w}{        }\PYG{n+nb+bp}{∀}\PYG{+w}{ }\PYG{n}{b}\PYG{+w}{ }\PYG{n+nb+bp}{∈}\PYG{+w}{ }\PYG{n}{Set}\PYG{n+nb+bp}{.}\PYG{n}{Ioo}\PYG{+w}{ }\PYG{o}{(}\PYG{l+m+mi}{0}\PYG{+w}{ }\PYG{o}{:}\PYG{+w}{ }\PYG{n}{ℝ}\PYG{o}{)}\PYG{+w}{ }\PYG{l+m+mi}{1}\PYG{o}{,}\PYG{+w}{ }\PYG{n}{F}\PYG{+w}{ }\PYG{n}{s}\PYG{+w}{ }\PYG{n}{b}\PYG{+w}{ }\PYG{n+nb+bp}{=}\PYG{+w}{ }\PYG{l+m+mi}{0}\PYG{+w}{ }\PYG{n+nb+bp}{↔}\PYG{+w}{ }\PYG{n}{b}\PYG{+w}{ }\PYG{n+nb+bp}{=}\PYG{+w}{ }\PYG{n}{a}\PYG{+w}{ }\PYG{n}{s}\PYG{o}{)}\PYG{+w}{ }\PYG{n+nb+bp}{∧}
\PYG{+w}{        }\PYG{n}{a}\PYG{+w}{ }\PYG{l+m+mi}{0}\PYG{+w}{ }\PYG{n+nb+bp}{=}\PYG{+w}{ }\PYG{o}{(}\PYG{l+m+mi}{2}\PYG{+w}{ }\PYG{n+nb+bp}{*}\PYG{+w}{ }\PYG{n+nb+bp}{√}\PYG{l+m+mi}{58}\PYG{+w}{ }\PYG{n+nb+bp}{\PYGZhy{}}\PYG{+w}{ }\PYG{l+m+mi}{13}\PYG{o}{)}\PYG{+w}{ }\PYG{n+nb+bp}{/}\PYG{+w}{ }\PYG{l+m+mi}{7}\PYG{+w}{ }\PYG{n+nb+bp}{∧}
\PYG{+w}{        }\PYG{n+nb+bp}{∃}\PYG{+w}{ }\PYG{n}{S}\PYG{+w}{ }\PYG{n+nb+bp}{\PYGZgt{}}\PYG{+w}{ }\PYG{l+m+mi}{0}\PYG{o}{,}\PYG{+w}{ }\PYG{n+nb+bp}{∀}\PYG{+w}{ }\PYG{n}{s}\PYG{+w}{ }\PYG{n+nb+bp}{≥}\PYG{+w}{ }\PYG{n}{S}\PYG{o}{,}\PYG{+w}{ }\PYG{n}{IsUniqueMinimizer}\PYG{+w}{ }\PYG{n}{s}\PYG{+w}{ }\PYG{o}{(}\PYG{n}{X}\PYG{+w}{ }\PYG{o}{(}\PYG{n}{a}\PYG{+w}{ }\PYG{n}{s}\PYG{o}{))}
\end{leancode}
The function \lean{\PYG{n}{a}\PYG{+w}{ }\PYG{o}{:}\PYG{+w}{ }\PYG{n}{ℝ}\PYG{+w}{ }\PYG{n+nb+bp}{→}\PYG{+w}{ }\PYG{n}{ℝ}} represents the parameter map $s\mapsto a_s$.
Its first property places $a_s$ in $(0,1)$ and characterizes it as the
unique zero of $F(s,\cdot)$ there, for every $s\geq0$. Its second property
recovers the logarithmic parameter. The last line asserts the existence
of $S>0$ such that $X(a_s)$ is the unique global minimizer up to congruence for every $s\geq S$.

\subsection{A completely monotonic counterexample}\label{subsec:lean-counterexample}

The fourth Lean statement concerns potentials of a squared distance.
For $v:(0,\infty)\to\R$, write
\[
 E_v(Y)=\sum_{0\leq i<j\leq7}v(\norm{y_i-y_j}^2).
\]
In Lean, this is defined by
\begin{leancode}
\PYG{+w}{    }\PYG{k+kn}{def}\PYG{+w}{ }\PYG{n}{Ef}\PYG{+w}{ }\PYG{o}{(}\PYG{n}{v}\PYG{+w}{ }\PYG{o}{:}\PYG{+w}{ }\PYG{n}{ℝ}\PYG{+w}{ }\PYG{n+nb+bp}{→}\PYG{+w}{ }\PYG{n}{ℝ}\PYG{o}{)}\PYG{+w}{ }\PYG{o}{(}\PYG{n}{Y}\PYG{+w}{ }\PYG{o}{:}\PYG{+w}{ }\PYG{n}{Fin}\PYG{+w}{ }\PYG{l+m+mi}{8}\PYG{+w}{ }\PYG{n+nb+bp}{→}\PYG{+w}{ }\PYG{n}{Point}\PYG{o}{)}\PYG{+w}{ }\PYG{o}{:}\PYG{+w}{ }\PYG{n}{ℝ}\PYG{+w}{ }\PYG{o}{:=}
\PYG{+w}{      }\PYG{n+nb+bp}{∑}\PYG{+w}{ }\PYG{n}{i}\PYG{+w}{ }\PYG{o}{:}\PYG{+w}{ }\PYG{n}{Fin}\PYG{+w}{ }\PYG{l+m+mi}{8}\PYG{o}{,}\PYG{+w}{ }\PYG{n+nb+bp}{∑}\PYG{+w}{ }\PYG{n}{j}\PYG{+w}{ }\PYG{o}{:}\PYG{+w}{ }\PYG{n}{Fin}\PYG{+w}{ }\PYG{l+m+mi}{8}\PYG{+w}{ }\PYG{k}{with}\PYG{+w}{ }\PYG{n}{i}\PYG{+w}{ }\PYG{n+nb+bp}{\PYGZlt{}}\PYG{+w}{ }\PYG{n}{j}\PYG{o}{,}\PYG{+w}{ }\PYG{n}{v}\PYG{+w}{ }\PYG{o}{(}\PYG{n+nb+bp}{‖}\PYG{n}{Y}\PYG{+w}{ }\PYG{n}{i}\PYG{+w}{ }\PYG{n+nb+bp}{\PYGZhy{}}\PYG{+w}{ }\PYG{n}{Y}\PYG{+w}{ }\PYG{n}{j}\PYG{n+nb+bp}{‖}\PYG{+w}{ }\PYG{n+nb+bp}{\PYGZca{}}\PYG{+w}{ }\PYG{l+m+mi}{2}\PYG{o}{)}
\end{leancode}
The separate name \lean{\PYG{n}{Ef}} distinguishes this general potential energy
from the previously defined Riesz and logarithmic energies.

Recall that $v$ is strictly completely monotonic on $(0,\infty)$ if it
is smooth and satisfies the conditions
\[
 (-1)^k v^{(k)}(t)>0,\quad k=0,1,2,\dots, \quad t>0.
\]
The formal definition in Lean of a strictly completely monotonic functions reads as follows
\begin{leancode}
\PYG{+w}{    }\PYG{k+kn}{def}\PYG{+w}{ }\PYG{n}{StrictlyCompletelyMonotone}\PYG{+w}{ }\PYG{o}{(}\PYG{n}{v}\PYG{+w}{ }\PYG{o}{:}\PYG{+w}{ }\PYG{n}{ℝ}\PYG{+w}{ }\PYG{n+nb+bp}{→}\PYG{+w}{ }\PYG{n}{ℝ}\PYG{o}{)}\PYG{+w}{ }\PYG{o}{:}\PYG{+w}{ }\PYG{k+kt}{Prop}\PYG{+w}{ }\PYG{o}{:=}
\PYG{+w}{      }\PYG{n}{ContDiffOn}\PYG{+w}{ }\PYG{n}{ℝ}\PYG{+w}{ }\PYG{n+nb+bp}{∞}\PYG{+w}{ }\PYG{n}{v}\PYG{+w}{ }\PYG{o}{(}\PYG{n}{Set}\PYG{n+nb+bp}{.}\PYG{n}{Ioi}\PYG{+w}{ }\PYG{l+m+mi}{0}\PYG{o}{)}\PYG{+w}{ }\PYG{n+nb+bp}{∧}
\PYG{+w}{      }\PYG{n+nb+bp}{∀}\PYG{+w}{ }\PYG{n}{k}\PYG{+w}{ }\PYG{o}{:}\PYG{+w}{ }\PYG{n}{ℕ}\PYG{o}{,}\PYG{+w}{ }\PYG{n+nb+bp}{∀}\PYG{+w}{ }\PYG{n}{r}\PYG{+w}{ }\PYG{o}{:}\PYG{+w}{ }\PYG{n}{ℝ}\PYG{o}{,}\PYG{+w}{ }\PYG{l+m+mi}{0}\PYG{+w}{ }\PYG{n+nb+bp}{\PYGZlt{}}\PYG{+w}{ }\PYG{n}{r}\PYG{+w}{ }\PYG{n+nb+bp}{→}\PYG{+w}{ }\PYG{l+m+mi}{0}\PYG{+w}{ }\PYG{n+nb+bp}{\PYGZlt{}}\PYG{+w}{ }\PYG{o}{(}\PYG{n+nb+bp}{\PYGZhy{}}\PYG{l+m+mi}{1}\PYG{+w}{ }\PYG{o}{:}\PYG{+w}{ }\PYG{n}{ℝ}\PYG{o}{)}\PYG{+w}{ }\PYG{n+nb+bp}{\PYGZca{}}\PYG{+w}{ }\PYG{n}{k}\PYG{+w}{ }\PYG{n+nb+bp}{*}\PYG{+w}{ }\PYG{n}{iteratedDeriv}\PYG{+w}{ }\PYG{n}{k}\PYG{+w}{ }\PYG{n}{v}\PYG{+w}{ }\PYG{n}{r}
\end{leancode}
Here, \lean{\PYG{n}{Set}\PYG{n+nb+bp}{.}\PYG{n}{Ioi}\PYG{+w}{ }\PYG{l+m+mi}{0}} denotes $(0,\infty)$ and
\lean{\PYG{n}{ContDiffOn}\PYG{+w}{ }\PYG{n}{ℝ}\PYG{+w}{ }\PYG{n+nb+bp}{∞}\PYG{+w}{ }\PYG{n}{v}\PYG{+w}{ }\PYG{o}{(}\PYG{n}{Set}\PYG{n+nb+bp}{.}\PYG{n}{Ioi}\PYG{+w}{ }\PYG{l+m+mi}{0}\PYG{o}{)}} expresses smoothness on that interval.
The notation \lean{\PYG{n+nb+bp}{∞}} means that derivatives of every finite order exist.
The expression \lean{\PYG{n}{iteratedDeriv}\PYG{+w}{ }\PYG{n}{k}\PYG{+w}{ }\PYG{n}{v}\PYG{+w}{ }\PYG{n}{r}} corresponds to $v^{(k)}(r)$. Lean's natural numbers start at zero, so the case $k=0$ includes strict positivity of $v$ itself.

The energy of an antiprism is given by its four distance multiplicities:
\begin{leancode}
\PYG{+w}{    }\PYG{k+kn}{def}\PYG{+w}{ }\PYG{n}{G}\PYG{+w}{ }\PYG{o}{(}\PYG{n}{v}\PYG{+w}{ }\PYG{o}{:}\PYG{+w}{ }\PYG{n}{ℝ}\PYG{+w}{ }\PYG{n+nb+bp}{→}\PYG{+w}{ }\PYG{n}{ℝ}\PYG{o}{)}\PYG{+w}{ }\PYG{o}{(}\PYG{n}{a}\PYG{+w}{ }\PYG{o}{:}\PYG{+w}{ }\PYG{n}{ℝ}\PYG{o}{)}\PYG{+w}{ }\PYG{o}{:}\PYG{+w}{ }\PYG{n}{ℝ}\PYG{+w}{ }\PYG{o}{:=}
\PYG{+w}{      }\PYG{l+m+mi}{8}\PYG{+w}{ }\PYG{n+nb+bp}{*}\PYG{+w}{ }\PYG{n}{v}\PYG{+w}{ }\PYG{o}{(}\PYG{n}{dA}\PYG{+w}{ }\PYG{n}{a}\PYG{o}{)}\PYG{+w}{ }\PYG{n+nb+bp}{+}\PYG{+w}{ }\PYG{l+m+mi}{4}\PYG{+w}{ }\PYG{n+nb+bp}{*}\PYG{+w}{ }\PYG{n}{v}\PYG{+w}{ }\PYG{o}{(}\PYG{n}{dB}\PYG{+w}{ }\PYG{n}{a}\PYG{o}{)}\PYG{+w}{ }\PYG{n+nb+bp}{+}\PYG{+w}{ }\PYG{l+m+mi}{8}\PYG{+w}{ }\PYG{n+nb+bp}{*}\PYG{+w}{ }\PYG{n}{v}\PYG{+w}{ }\PYG{o}{(}\PYG{n}{dC}\PYG{+w}{ }\PYG{n}{a}\PYG{o}{)}\PYG{+w}{ }\PYG{n+nb+bp}{+}\PYG{+w}{ }\PYG{l+m+mi}{8}\PYG{+w}{ }\PYG{n+nb+bp}{*}\PYG{+w}{ }\PYG{n}{v}\PYG{+w}{ }\PYG{o}{(}\PYG{n}{dD}\PYG{+w}{ }\PYG{n}{a}\PYG{o}{)}
\end{leancode}
The proof verifies $E_v(X(a))=G_v(a)$ for $0<a<1$, where $G_v(a)$ is represented by \lean{\PYG{n}{G}\PYG{+w}{ }\PYG{n}{v}\PYG{+w}{ }\PYG{n}{a}}. We then fix the potential explicitly:
\begin{leancode}
\PYG{+w}{    }\PYG{k+kn}{def}\PYG{+w}{ }\PYG{n}{aStar}\PYG{+w}{ }\PYG{o}{:}\PYG{+w}{ }\PYG{n}{ℝ}\PYG{+w}{ }\PYG{o}{:=}\PYG{+w}{ }\PYG{l+m+mi}{5}\PYG{+w}{ }\PYG{n+nb+bp}{/}\PYG{+w}{ }\PYG{l+m+mi}{18}
\PYG{+w}{    }\PYG{k+kn}{def}\PYG{+w}{ }\PYG{n}{n}\PYG{+w}{ }\PYG{o}{:}\PYG{+w}{ }\PYG{n}{ℕ}\PYG{+w}{ }\PYG{o}{:=}\PYG{+w}{ }\PYG{l+m+mi}{10000}
\PYG{+w}{    }\PYG{k+kn}{def}\PYG{+w}{ }\PYG{n}{dStar}\PYG{+w}{ }\PYG{o}{:}\PYG{+w}{ }\PYG{n}{ℝ}\PYG{+w}{ }\PYG{o}{:=}\PYG{+w}{ }\PYG{l+m+mi}{13}\PYG{+w}{ }\PYG{n+nb+bp}{/}\PYG{+w}{ }\PYG{l+m+mi}{9}
\PYG{+w}{    }\PYG{k+kn}{def}\PYG{+w}{ }\PYG{n}{g}\PYG{+w}{ }\PYG{o}{(}\PYG{n}{r}\PYG{+w}{ }\PYG{o}{:}\PYG{+w}{ }\PYG{n}{ℝ}\PYG{o}{)}\PYG{+w}{ }\PYG{o}{:}\PYG{+w}{ }\PYG{n}{ℝ}\PYG{+w}{ }\PYG{o}{:=}\PYG{+w}{ }\PYG{l+m+mi}{1}\PYG{+w}{ }\PYG{n+nb+bp}{/}\PYG{+w}{ }\PYG{o}{((}\PYG{n}{n}\PYG{+w}{ }\PYG{o}{:}\PYG{+w}{ }\PYG{n}{ℝ}\PYG{o}{)}\PYG{+w}{ }\PYG{n+nb+bp}{+}\PYG{+w}{ }\PYG{n}{r}\PYG{o}{)}
\PYG{+w}{    }\PYG{k+kn}{def}\PYG{+w}{ }\PYG{n}{h}\PYG{+w}{ }\PYG{o}{(}\PYG{n}{r}\PYG{+w}{ }\PYG{o}{:}\PYG{+w}{ }\PYG{n}{ℝ}\PYG{o}{)}\PYG{+w}{ }\PYG{o}{:}\PYG{+w}{ }\PYG{n}{ℝ}\PYG{+w}{ }\PYG{o}{:=}\PYG{+w}{ }\PYG{o}{(}\PYG{n}{dStar}\PYG{+w}{ }\PYG{n+nb+bp}{/}\PYG{+w}{ }\PYG{n}{r}\PYG{o}{)}\PYG{+w}{ }\PYG{n+nb+bp}{\PYGZca{}}\PYG{+w}{ }\PYG{n}{n}
\PYG{+w}{    }\PYG{k+kn}{def}\PYG{+w}{ }\PYG{n}{lambda}\PYG{+w}{ }\PYG{o}{:}\PYG{+w}{ }\PYG{n}{ℝ}\PYG{+w}{ }\PYG{o}{:=}\PYG{+w}{ }\PYG{n+nb+bp}{\PYGZhy{}}\PYG{o}{(}\PYG{n}{deriv}\PYG{+w}{ }\PYG{o}{(}\PYG{n}{G}\PYG{+w}{ }\PYG{n}{g}\PYG{o}{)}\PYG{+w}{ }\PYG{n}{aStar}\PYG{o}{)}\PYG{+w}{ }\PYG{n+nb+bp}{/}\PYG{+w}{ }\PYG{n}{deriv}\PYG{+w}{ }\PYG{o}{(}\PYG{n}{G}\PYG{+w}{ }\PYG{n}{h}\PYG{o}{)}\PYG{+w}{ }\PYG{n}{aStar}
\PYG{+w}{    }\PYG{k+kn}{def}\PYG{+w}{ }\PYG{n}{f}\PYG{+w}{ }\PYG{o}{(}\PYG{n}{r}\PYG{+w}{ }\PYG{o}{:}\PYG{+w}{ }\PYG{n}{ℝ}\PYG{o}{)}\PYG{+w}{ }\PYG{o}{:}\PYG{+w}{ }\PYG{n}{ℝ}\PYG{+w}{ }\PYG{o}{:=}\PYG{+w}{ }\PYG{n}{g}\PYG{+w}{ }\PYG{n}{r}\PYG{+w}{ }\PYG{n+nb+bp}{+}\PYG{+w}{ }\PYG{n}{lambda}\PYG{+w}{ }\PYG{n+nb+bp}{*}\PYG{+w}{ }\PYG{n}{h}\PYG{+w}{ }\PYG{n}{r}
\end{leancode}
In mathematical notation, these definitions are
\[
 a_* = \frac5{18},\quad n=10000,\quad d_* = \frac{13}{9},
 \quad g(t)=\frac1{n+t},\quad h(t)=\left(\frac{d_*}{t}\right)^n,
\]
\[
 \lambda=-\frac{G_g'(a_*)}{G_h'(a_*)},\qquad f=g+\lambda h.
\]
The expression \lean{\PYG{n}{deriv}\PYG{+w}{ }\PYG{o}{(}\PYG{n}{G}\PYG{+w}{ }\PYG{n}{g}\PYG{o}{)}\PYG{+w}{ }\PYG{n}{aStar}} is the derivative of $a\mapsto
G_g(a)$ at $a_*$. The constants in $g$ and $h$ are held fixed in this
derivative. The annotation \lean{\PYG{o}{(}\PYG{n}{n}\PYG{+w}{ }\PYG{o}{:}\PYG{+w}{ }\PYG{n}{ℝ}\PYG{o}{)}} regards the natural number $n$
as a real number when it occurs in a denominator.
Finally, the counterexample is expressed in the following theorem.
\begin{leancode}
\PYG{+w}{    }\PYG{k+kn}{theorem}\PYG{+w}{ }\PYG{n}{counterexample}\PYG{+w}{ }\PYG{o}{:}
\PYG{+w}{      }\PYG{l+m+mi}{0}\PYG{+w}{ }\PYG{n+nb+bp}{\PYGZlt{}}\PYG{+w}{ }\PYG{n}{lambda}\PYG{+w}{ }\PYG{n+nb+bp}{∧}
\PYG{+w}{      }\PYG{n}{StrictlyCompletelyMonotone}\PYG{+w}{ }\PYG{n}{f}\PYG{+w}{ }\PYG{n+nb+bp}{∧}
\PYG{+w}{      }\PYG{n}{Filter}\PYG{n+nb+bp}{.}\PYG{n}{Tendsto}\PYG{+w}{ }\PYG{n}{f}\PYG{+w}{ }\PYG{o}{(}\PYG{n}{nhdsWithin}\PYG{+w}{ }\PYG{l+m+mi}{0}\PYG{+w}{ }\PYG{o}{(}\PYG{n}{Set}\PYG{n+nb+bp}{.}\PYG{n}{Ioi}\PYG{+w}{ }\PYG{l+m+mi}{0}\PYG{o}{))}\PYG{+w}{ }\PYG{n}{Filter}\PYG{n+nb+bp}{.}\PYG{n}{atTop}\PYG{+w}{ }\PYG{n+nb+bp}{∧}
\PYG{+w}{      }\PYG{n+nb+bp}{∃}\PYG{+w}{ }\PYG{n}{Y}\PYG{+w}{ }\PYG{o}{:}\PYG{+w}{ }\PYG{n}{Fin}\PYG{+w}{ }\PYG{l+m+mi}{8}\PYG{+w}{ }\PYG{n+nb+bp}{→}\PYG{+w}{ }\PYG{n}{Point}\PYG{o}{,}\PYG{+w}{ }\PYG{n}{IsConfiguration}\PYG{+w}{ }\PYG{n}{Y}\PYG{+w}{ }\PYG{n+nb+bp}{∧}
\PYG{+w}{        }\PYG{n+nb+bp}{∀}\PYG{+w}{ }\PYG{n}{a}\PYG{+w}{ }\PYG{n+nb+bp}{∈}\PYG{+w}{ }\PYG{n}{Set}\PYG{n+nb+bp}{.}\PYG{n}{Ioo}\PYG{+w}{ }\PYG{o}{(}\PYG{l+m+mi}{0}\PYG{+w}{ }\PYG{o}{:}\PYG{+w}{ }\PYG{n}{ℝ}\PYG{o}{)}\PYG{+w}{ }\PYG{l+m+mi}{1}\PYG{o}{,}\PYG{+w}{ }\PYG{n+nb+bp}{∀}\PYG{+w}{ }\PYG{n}{Z}\PYG{+w}{ }\PYG{o}{:}\PYG{+w}{ }\PYG{n}{Fin}\PYG{+w}{ }\PYG{l+m+mi}{8}\PYG{+w}{ }\PYG{n+nb+bp}{→}\PYG{+w}{ }\PYG{n}{Point}\PYG{o}{,}
\PYG{+w}{          }\PYG{n}{Congruent}\PYG{+w}{ }\PYG{n}{Z}\PYG{+w}{ }\PYG{o}{(}\PYG{n}{X}\PYG{+w}{ }\PYG{n}{a}\PYG{o}{)}\PYG{+w}{ }\PYG{n+nb+bp}{→}\PYG{+w}{ }\PYG{n}{Ef}\PYG{+w}{ }\PYG{n}{f}\PYG{+w}{ }\PYG{n}{Y}\PYG{+w}{ }\PYG{n+nb+bp}{\PYGZlt{}}\PYG{+w}{ }\PYG{n}{Ef}\PYG{+w}{ }\PYG{n}{f}\PYG{+w}{ }\PYG{n}{Z}
\end{leancode}
The first two clauses assert that $\lambda>0$ and that $f$ is strictly
completely monotonic. The third clause is the limit
$
 \lim_{t\downarrow0}f(t)=+\infty.
$
More precisely, \lean{\PYG{n}{nhdsWithin}\PYG{+w}{ }\PYG{l+m+mi}{0}\PYG{+w}{ }\PYG{o}{(}\PYG{n}{Set}\PYG{n+nb+bp}{.}\PYG{n}{Ioi}\PYG{+w}{ }\PYG{l+m+mi}{0}\PYG{o}{)}} describes approach to
zero through positive arguments, and \lean{\PYG{n}{Filter}\PYG{n+nb+bp}{.}\PYG{n}{atTop}} describes
divergence to $+\infty$. The last clause produces one configuration $Y$ of eight distinct unit
vectors such that
\[
 E_f(Y)<E_f(Z)
 \quad\text{whenever }0<a<1\text{ and }Z\text{ is congruent to }X(a).
\]
Thus, the same configuration has lower energy than every square
antiprism, including all orthogonal images and relabelings.
The coefficient's positivity and the admissibility of $Y$ are
conclusions of the theorem.

Every squared distance between distinct unit vectors belongs to $(0,4]$.
Consequently, restricting $f$ to this interval yields a counterexample
to the assertion of Cohn and Woo's Conjecture~14~\cite{CW} on universality of the square antiprism.
Note that the theorem does not identify the configuration which globally minimizes
$E_f$. We also note that Lean defines the functions $g,h,f$ on all of $\R$; their assigned
values at nonpositive arguments play no role in the assertions above.

\subsection{Source code and verification}\label{subsec:lean-source}

The file \lean{\PYG{n}{Showcase}\PYG{n+nb+bp}{.}\PYG{n}{lean}} in the accompanying GitHub repository contains the definitions and the four statements displayed in this section. Its four occurrences of \lean{\PYG{g+gr}{sorry}} omit the proofs for readability. The companion file \lean{\PYG{n}{Showcase\PYGZus{}WithProofs}\PYG{n+nb+bp}{.}\PYG{n}{lean}} contains the same
definitions and statements with complete proofs, checked by Lean's kernel.  The development uses Lean~4.33.0.

The Lean code of the four theorems are in the subdirectories
\lean{\PYG{n}{Logarithmic}}, \lean{\PYG{n}{Coulomb}}, \lean{\PYG{n}{LargeS}}, and \lean{\PYG{n}{Monotonic}}
of \lean{\PYG{n}{LeanCode}}. In the logarithmic and Coulomb cases, Lean also checks the identities and inequalities in the finite certificates.
Their correctness is \emph{not} assumed from the output of the accompanying Python programs. All four public theorems depend only on \lean{\PYG{n}{propext}}, \lean{\PYG{n}{Classical}\PYG{n+nb+bp}{.}\PYG{n}{choice}}, and
\lean{\PYG{n}{Quot}\PYG{n+nb+bp}{.}\PYG{n}{sound}}, the standard Lean axioms.

\subsection{Acknowledgments}
L.L.~is grateful to the Azrieli Foundation for the award of an Azrieli Fellowship and acknowledges the support of this research by ISF Grant No.~854/25.

\end{document}